\definecolor{mgre}{cmyk}{0.92,0.00,0.59,0.25}
\def\bH{{\mathbb{H}}}
\def\bR{{\mathbb{R}}}
\def\bS{{\mathbb{S}}}
\def\bfE{{\mathbf{E}}}
\def\bfP{{\mathbf{P}}}
\def\bfR{{\mathbf{R}}}
\def\cA{{\mathcal{A}}}
\def\cC{{\mathcal{C}}}
\def\cE{{\mathcal{E}}}
\def\cF{{\mathcal{F}}}
\def\cK{{\mathcal{K}}}
\def\cL{{\mathcal{L}}}
\def\cM{{\mathcal{M}}}
\def\cO{{\mathcal{O}}}
\def\cS{{\mathcal{S}}}
\def\sB{{\mathscr{B}}}
\def\sF{{\mathscr{F}}}
\def\1{{\mathbf{1}}}
\newcommand{\dis}{\displaystyle}
\newcommand{\loc}{\textrm{loc}}
\def\<{\langle}
\def\>{\rangle}
\newtheorem{thm}{Theorem}[section]
\newtheorem{prop}[thm]{Proposition}
\newtheorem{lem}[thm]{Lemma}
\newtheorem{cor}[thm]{Corollary}
\theoremstyle{definition}
\newtheorem{defi}[thm]{Definition}
\newtheorem{prob}{}[section]
\newtheorem{rem}[thm]{Remark}
\newtheorem{ex}[thm]{Example}
\newtheorem{pro}{Problem}[section]
\newtheorem{ass}[thm]{Assumption}
\newcommand{\bd}{\begin{defi}}
\newcommand{\ed}{\end{defi}}
\newcommand{\bpro}{\begin{pro}}
\newcommand{\epro}{\end{pro}}
\newcommand{\bec}{\begin{cases}}
\newcommand{\eec}{\end{cases}}
\newcommand{\bpr}{\begin{prob}}
\newcommand{\epr}{\end{prob}}
\newcommand{\bt}{\begin{thm}}
\newcommand{\et}{\end{thm}}
\newcommand{\ba}{\begin{ass}}
\newcommand{\ea}{\end{ass}}
\newcommand{\br}{\begin{rem}}
\newcommand{\er}{\end{rem}}
\newcommand{\bpm}{\begin{pmatrix}}
\newcommand{\epm}{\end{pmatrix}}
\newcommand{\be}{\begin{ex}}
\newcommand{\ee}{\end{ex}}
\newcommand{\bp}{\begin{prop}}
\newcommand{\ep}{\end{prop}}
\newcommand{\bl}{\begin{lem}}
\newcommand{\el}{\end{lem}}
\newcommand{\bc}{\begin{cor}}
\newcommand{\ec}{\end{cor}}
\newcommand{\bq}{\begin{que}}
\newcommand{\eq}{\end{que}}
\newcommand{\beqn}{\begin{eqnarray*}}
\newcommand{\eeqn}{\end{eqnarray*}}
\newcommand{\beqnn}{\begin{eqnarray}}
\newcommand{\eeqnn}{\end{eqnarray}}
\newcommand{\bequ}{\begin{equation}}
\newcommand{\eequ}{\end{equation}}
\newcommand{\benu}{\begin{enumerate}}
\newcommand{\eenu}{\end{enumerate}}
\newcommand{\barr}{\begin{array}{rcl}}
\newcommand{\ear}{\end{array}}
\newcommand{\la}{\label}
\newcommand{\eps}{\epsilon}
\newcommand{\Om}{\Omega}
\begin{document}

\newcommand\sfrac[2]{{#1/#2}}

\newcommand\cont{\operatorname{cont}}
\newcommand\diff{\operatorname{diff}}

%\section{Introduction}
%\begin{document}
	
	\title [Maximum Principle]{Explosion by Killing and Maximum Principle in Symmetric Markov Processes}
	
	%    Information for first author
	\author{Masayoshi Takeda}
	%    Address of record for the research reported here
	\address{Department of Mathematics,
		Kansai University, Yamatecho, Suita, 564-8680, Japan}
	%    Current address
	\email{mtakeda@kansai-u.ac.jp}
	%    \thanks will become a 1st page footnote.
	\thanks{The author was supported in part by Grant-in-Aid for Scientific
		Research (No.18H01121(B)), Japan Society for the Promotion of Science.}
	
	%    General info
	\subjclass[2020]{60J46, 60G07, 31C25}
	%\date{\today}
	
	\keywords{Conservativeness, Maximum Principle, Dirichlet form, Schr\"odinger form, 
	symmetric Hunt process}
	
	\begin{abstract}
Keller and Lenz \cite{KL} define a concept of {\it stochastic completeness at infinity} (SCI) for  
a regular symmetric Dirichlet form $(\cE,\cF)$. We show that (SCI) can be characterized probabilistically  
by using the predictable part $\zeta^p$ of
the life time $\zeta$ of the symmetric Markov process $X=({\bf P}_x,X_t)$ 
generated by 
$(\cE,\cF)$, that is, (SCI) is equivalent to $\bfP_x(\zeta=\zeta^p<\infty)=0$. We define a concept, 
 {\it explosion by killing} (EK), by $\bfP_x(\zeta=\zeta^i<\infty)=1$. Here $\zeta^i$ is the 
  totally inaccessible part of $\zeta$. We see that (EK) is equivalent to (SCI) and $\bfP_x(\zeta=\infty)=1$.  
 Let $X^{\rm res}$ be the {\it resurrected process} generated by the  {\it resurrected form}, a regular Dirichlet form 
 constructed by removing the killing part from $(\cE, \cF)$.   
  Extending work of Masamune and Schmidt (\cite{MS}), we show that (EK) is also 
 equivalent to the ordinary conservation property of time changed process of  $X^{\rm res}$ by $A^k_t$,
 where the $A^k_t$ is the positive continuous additive functional in the Revuz correspondence 
 to the killing measure $k$ in the Beurling-Deny formula (Theorem \ref{ma-sh}).

 We consider the maximum principle for Schr\"odinger-type operator $\cL^\mu=\cL-\mu$.  
 Here $\cL$ is the self-adjoint operator 
 associated with  $(\cE,\cF)$ 
 %with non-local part 
 and $\mu$ is a  
 Green-tight Kato measure. Let $\lambda(\mu)$ be the principal eigenvalue 
  of the trace of $(\cE,\cF)$ 
 relative to $\mu$. We prove that if (EK) holds, then $\lambda(\mu)>1$ implies a Liouville property 
  that every bounded solution to $\cL^\mu u=0$
  is zero quasi-everywhere and that 
  the {\it refined maximum principle} in the sense of Berestycki-Nirenberg-Varadhan
 \cite{BNV} holds for $\cL^\mu$ if and only if $\lambda(\mu)>1$ (Theorem \ref{RMP}).
 .
 
% , which  extend results in \cite{T-B}, \cite{T4} to    more general class of subsolutions. 
		\end{abstract}
	
	\maketitle
	
	\section{Introduction}
In \cite{T-B}, \cite{T4}, we prove the maximum principle for Schr\"odinger forms,
 and in Kim and Kuwae \cite{KK}, they extend our results to more general 
  Schr\"odinger forms with distributions as potential.
   Our aim in this paper is to extend results in \cite{T-B}, \cite{T4} to 
 more general class of subsolutions. 		
	
Let $E$ be a locally compact separable metric space and $m$ 
a positive Radon measure on $E$ with full topological support. 
Let $(\cE, \cF)$ be a transient, regular and symmetric Dirichlet form 
on $L^2(E;m)$ and $X = (\bfP_x, X_t, \zeta)$ the 
$m$-symmetric Hunt process generated by $(\cE, \cF)$. Here $\zeta$ is 
the life time of $X$, $\zeta=\inf\{t>0\mid X_t\not\in E\}$. 
We, in addition, assume that 
$X$ is irreducible and strong Feller.
We take a point $\Delta$ not in $E$ as a cemetery. Any function $u$ 
on $E$ is always extended to a function on $E_\Delta(=E\cup \{\Delta\})
$ by setting $u(\Delta)=0$. When $E$ is not compact, we
write $E_\infty$ for the one point comactification of $E$.  

Let $\mu$ be a non-trivial measure in the Green-tight Kato class $\cK_\infty$
with respect to $X$ (For the definition of 
  $\cK_\infty$, see Definition 
  \ref{def-Kato} below.)
  %Suppose that $X$ is always supposed to be transient. 
For $\mu\in\cK_\infty$,
 we define a
Schr\"{o}dinger form  $(\cE^\mu , \cF^\mu(=\cF))$ on $L^2(E;m)$ by 
\begin{equation*}
\cE^\mu(u,v)=\cE(u,v)-\int_E\widetilde{u}\widetilde{v}d\mu,\ \ u,v\in\cF^\mu.
\end{equation*}
Here $\widetilde{u}$ represents a quasi-continuous version of $u\in\cF$. 
We denote by $\cF_{\loc}$ the set of functions locally in $\cF$. Each function $u$ in 
$\cF_{\loc}$ admits a quasi-continuous version $\widetilde{u}$. In the sequel, 
we suppose that $u\in\cF_{\loc}$ is already modified and write $u$ for $\widetilde{u}$ simply. 

\medskip
Define a measure on $E$ by
\bequ\la{mj}
\mu^j_{\langle u\rangle}(B)=\iint_{B\times E}(u(x)-u(y))^2J(dx,dy),\ \ B\in\sB(E),
\eequ 
where $J$ is the jumping measure in the Beurling-Deny formula for $(\cE,\cF)$ (\cite[Theorem 3.2.1]{FOT})
and $\sB(E)$ is the set of Borel subsets of $E$. 
 Following \cite{FL}, \cite{Ku}, we introduce a subspace $\cF^\dagger_{\loc}$ 
 of $\cF_{\loc}$:
$$
\cF^\dagger_{\loc}=\left\{u\in\cF_{\loc}\,\Big|\, \mu^j_{\langle u\rangle}\ 
\text{is a Radon measure on $E$}\ \right\}.
$$
We then see that a 
bounded function in ${{\cF}}_{\text{loc}}$ belongs to ${{\cF}}^\dagger_{\text{loc}}$ and that  
 for $u\in{\cF}^\dagger_{\loc}$ and $\varphi \in \cF\cap C_0(E)$, 
$\cE(u,\varphi )$ is well-defined (\cite[Theorem 3.5]{FL}). Here $C_0(E)$ is the set of continuous 
functions on $E$ with compact support.

Define $\cL^{\mu}$ as the self-adjoint 
operator associated with the closed symmetric form 
$(\cE^\mu,\cF^\mu)$, $(-\cL^{\mu}u,v)_m=\cE^\mu(u,v)$. 
 A function $h\in\cF^\dagger_{\loc}\cap L^\infty_{\loc}$ is 
called a {\it solution} ({\it subsolution}, {\it supersolution}) to
$\cL^\mu u=0$ if 
$$
\cE^\mu(h,\varphi)=0 \ (\leq 0,\ \geq 0)\ \ \text{for all}\ \varphi\in\cF_+\cap C_0(E).
$$
Here $L^\infty_{\loc}$ is the set of locally bounded $m$-measurable functions on $E$ and 
for a function space $\cA$, $\cA_+\,(\cA_-)$ represents the totality 
of non-negative (non-positive) functions in $\cA$. 
We write ${\bf S}^{\mu,\rm sub}\,({\bf S}^{\mu,\rm sup})$ for 
the space of {subsolutions} ({supersolutions}) 
and define the function spaces:
\begin{align}\la{sol-0}
\cS^{\mu,\rm sub}&= \{h\in {\bf S}^{\mu,\rm sub}\mid   \|h^+\|_\infty<\infty\}.\\
\cS^{\mu,\rm sup}&= \{h\in {\bf S}^{\mu,\rm sup}\mid   \|h^-\|_\infty>-\infty\}.\\
\cS&=\cS^{\mu,\rm sub}\cap\cS^{\mu,\rm sup}
\end{align}
From now on, we will mainly discuss the space $\cS^{\mu,\rm sub}$ because $-h\in\cS^{\mu,\rm sub}$
for $h\in\cS^{\mu,\rm sup}$.

\bigskip
 We set 
\begin{align}
{\bS}&=\left\{\{x_n\}_{n=1}^\infty \subset E\mid
\text{
%$x_n\to \triangle $, 
$\lim_{n\to\infty }\bfE_{x_n}\left(e^{-\zeta}\right)=1$}
\right\}
\end{align}
and introduce a subspace of $\cS^{\mu,\rm sub}$ by
\begin{equation*}
{\cS}_0^{\mu,\rm sub}=\left\{h\in{\cS}^{\mu,\rm sub}\cap C(E)\mid \varlimsup_{n\to\infty }h(x_n)\leq 0
\ \text{for all}\ \{x_n\}_{n=1}^\infty \in \bS\right\}.
\end{equation*}
%$$
%\cH^{bb}(\mu)= \{ h \in \cD_{loc}(\cE) \cap C(X) \mid h\ \text{is bounded below},\  p_t^{\mu} h \leq  h\}.
%$$

\medskip
Following  Berestycki, Nirenberg and Varadhan \cite{BNV}, we define the {\it refined maximum principle}:

\bigskip
\noindent
\ \ \ \ \ (${\bf{RMP}}$)\ \ If $h\in {\cS}_0^{\mu,\rm sub}$, then $h(x)\leq 0$ for all $x\in E$.
%Following \cite{BNV}, we define the {\it refined maximum principle}:

\bigskip

\smallskip
\noindent

 We then obtain the following main theorem:
\begin{thm}\la{RMP1} For $\mu\in\cK_\infty$ 
	define 
	\begin{equation}\la{j-ground2}
		\lambda (\mu)= \inf \left\{ \cE(u)\, \Big|\  u \in
		\cF,\ \int_E u^2 d\mu = 1\right\},
	\end{equation}
	where $\cE(u)=\cE(u,u)$. Then 
$$
\lambda (\mu)>1 \Longleftrightarrow ({\bf RMP}).
$$
\end{thm}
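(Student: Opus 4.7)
The plan is to prove the two implications separately, in both cases via the Feynman--Kac representation of subsolutions. Let $A^\mu_t$ be the PCAF with Revuz measure $\mu$ and $p^\mu_t f(x)=\bfE_x[e^{A^\mu_t}f(X_t)]$ the associated Feynman--Kac semigroup. Since $\lambda(\mu)$ is the principal eigenvalue of the trace of $(\cE,\cF)$ relative to $\mu$, the standard variational/spectral correspondence for Green-tight Kato perturbations identifies $\lambda(\mu)>1$ with subcriticality of $p^\mu_t$; in particular, the gauge condition $\sup_x\bfE_x[e^{A^\mu_\zeta}]<\infty$ holds. This bounded gauge is the engine of the forward direction.

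For $\lambda(\mu)>1 \Rightarrow (\mathbf{RMP})$, fix $h\in\cS^{\mu,\mathrm{sub}}_0$. The subsolution inequality $\cE^\mu(h,\varphi)\le 0$ for $\varphi\in\cF_+\cap C_0(E)$, together with Fukushima's decomposition for $h\in\cF^\dagger_{\loc}\cap L^\infty_{\loc}$, shows that $e^{A^\mu_{t\wedge\tau}}h(X_{t\wedge\tau})$ is a local submartingale under $\bfP_x$ for q.e.~$x$. Localising by exit times $\tau_n$ from an exhausting sequence of relatively compact open sets $E_n\uparrow E$ yields
$$
h(x) \le \bfE_x\bigl[e^{A^\mu_{\tau_n}} h(X_{\tau_n})\bigr].
$$
To conclude $h(x)\le 0$, I will split the expectation on $\{\zeta<\infty\}$ and $\{\zeta=\infty\}$. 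On the first event, $\tau_n\uparrow\zeta$ and the strong Markov property give $\bfE_{X_{\tau_n}(\omega)}[e^{-\zeta}]=\bfE[e^{-(\zeta-\tau_n)}\mid\cF_{\tau_n}]\to 1$ for $\bfP_x$-a.e.~$\omega$, so the random sequence $\{X_{\tau_n}(\omega)\}$ lies in $\bS$; the defining condition of $\cS^{\mu,\mathrm{sub}}_0$ then forces $\limsup_n h(X_{\tau_n}(\omega))\le 0$. On $\{\zeta=\infty\}$, the trajectory escapes every $E_n$ and the subcriticality combined with $\|h^+\|_\infty<\infty$ and the dominating bound $\sup_x\bfE_x[e^{A^\mu_\zeta}]<\infty$ allows dominated convergence to kill this contribution. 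Putting the two halves together gives $h\le 0$ q.e., and strong Feller pointwise.

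For the converse $(\mathbf{RMP})\Rightarrow\lambda(\mu)>1$, I will argue contrapositively. If $\lambda(\mu)\le 1$, the Schr\"odinger form is at best critical, and the ground-state theory for Green-tight Kato perturbations developed in \cite{T-B}, \cite{T4} produces a positive bounded element $h_0\in\cF^\dagger_{\loc}$ with $\cE^\mu(h_0,\varphi)\le 0$ for $\varphi\in\cF_+\cap C_0(E)$ (one can take $h_0$ to be a bounded rescaling of the ground state, or in the critical case a suitable Feynman--Kac potential such as $G^\mu\mu$). Green-tightness forces $h_0(x_n)\to 0$ along every $\{x_n\}\in\bS$, so $h_0\in\cS^{\mu,\mathrm{sub}}_0$, yet $h_0$ is strictly positive somewhere, violating $(\mathbf{RMP})$.

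The principal obstacle is the limit step on $\{\zeta<\infty\}$ in the forward direction. Since $E$ possesses no intrinsic boundary apart from what the process detects, the only workable notion of ``$h$ vanishes at the boundary'' is the one encoded in $\bS$; one must verify that $\bfP_x$-a.e.\ trajectory produces an exit sequence $\{X_{\tau_n}\}$ that genuinely belongs to $\bS$. This is where the structural analysis of $\zeta$ provided by Theorem~\ref{ma-sh}, together with the equivalence between (EK) and (SCI), becomes essential: they guarantee that the probabilistic behaviour of $X$ as $t\uparrow\zeta$ is captured exactly by the algebraic condition $\bfE_{x_n}[e^{-\zeta}]\to 1$ defining $\bS$, making the Feynman--Kac boundary analysis compatible with the definition of $\cS^{\mu,\mathrm{sub}}_0$.
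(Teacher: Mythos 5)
Your overall route is the paper's: the Feynman--Kac submartingale inequality plus the gauge theorem $\lambda(\mu)>1\Leftrightarrow\sup_x\bfE_x(e^{A^\mu_\zeta})<\infty$ and reverse Fatou for the forward direction, and a ground state of the critical form $\cE^{\lambda(\mu)\mu}$ (which is $p^\mu_t$-subinvariant and tends to $0$ along $\bS$-sequences by H\"older and the Kato bound) as the counterexample for the converse. The converse half is sound in substance. But there is one genuine gap in your forward direction, at exactly the point you flag as the ``principal obstacle.''

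The gap is your treatment of $\{\zeta=\infty\}$. You claim that the gauge bound together with $\|h^+\|_\infty<\infty$ lets dominated convergence ``kill this contribution,'' but dominated convergence needs the integrand $e^{A^\mu_{\tau_n}}h^+(X_{\tau_n})$ to tend to $0$ a.s.\ on that event, and nothing forces $h^+(X_{\tau_n})\to 0$ there: membership in $\cS^{\mu,\rm sub}_0$ only controls $h$ along sequences in $\bS$, i.e.\ sequences with $\bfE_{x_n}(e^{-\zeta})\to 1$, and on $\{\zeta=\infty\}$ the exit sequence $\{X_{\tau_n}\}$ has no reason to lie in $\bS$. Indeed the equivalence is simply false without the lifetime hypothesis: for a transient \emph{conservative} $X$ one has $\bS=\emptyset$, so $\cS^{\mu,\rm sub}_0=\cS^{\mu,\rm sub}\cap C(E)$, and the gauge function $g^\mu=\bfE_x(e^{A^\mu_\zeta})\ge 1$ is itself a bounded positive continuous solution, violating (RMP) even when $\lambda(\mu)>1$. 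The paper's Theorem \ref{RMP} carries the standing assumption $\bfP_x(\zeta<\infty)=1$ (this is the content of (EK) in the abstract), under which $\{\zeta=\infty\}$ is null and the issue disappears; your proof must invoke that hypothesis rather than try to argue it away. Two smaller points: (i) the a.s.\ statement $\bfE_{X_{\tau_n}}(e^{-\zeta})\to 1$ requires either a subsequence extraction from the $L^1$ convergence (as in Lemma \ref{s-i}) or an appeal to Hunt's conditional dominated convergence lemma --- it is not immediate from the strong Markov identity alone; (ii) your ``local submartingale under $\bfP_x$ via Fukushima's decomposition'' compresses the real technical work: for $h\in\cF^\dagger_{\loc}$ in the presence of a killing measure the paper derives the inequality (\ref{s-1}) under the resurrected process $X^{\rm res}$ via Kuwae's generalized decomposition and then converts back through the subprocess relation $e^{-A^k_t}$; the inequality you write is equivalent, but the direct decomposition under $X$ is exactly what Section 2 of the paper is built to avoid.
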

$\lambda (\mu)$ is identified with the principal eigenvalue of the trace of $(\cE,\cF)$ relative to $\mu$,
in other words, the principal eigenvalue of the time changed process,
$X_{\tau_t},\ \tau_t=\inf\{s>0\mid A^\mu_s>t\}$,  
where $A^\mu_t$ is the positive continuous additive functional (PCAF in abbreviation) of $X$  
in the Revuz correspondence to the measure $\mu$ (cf. \cite[Section 6.2]{FOT}). 
In the sequel, for a symmetric form $a(u,v)$ we simply write 
$a(u)$ for $a(u,u)$. 

The differences between Theorem \ref{RMP1} and the corresponding theorems in 
\cite{T-B}, \cite{T4} are as follows: In this paper we will deal with general Dirichlet forms 
 with non-local part, while in \cite[Theorem 3.2]{T-B}, \cite[Remark 4.3]{T4} Dirichlet forms are supposed to be 
 strongly local. Moreover, when general Dirichlet forms with non-local 
 part are dealt with, each $h\in {\cS}_0^{\mu,\rm sub}$
  is supposed to be in $\cF_e\cap C(E)(\subset \cF^\dagger_{\loc}\cap C(E)$ (\cite[Theorem 4.1]{T4}). 
  Here, $\cF_e$ is the {\it 
  extended Dirichlet space} of $(\cE,\cF)$.
  
Let $\gamma(\mu)$ be the principal eigenvalue of the Schr\"odinger operator $-\cL^\mu$:
\begin{equation}\la{j-ground3}
	\gamma (\mu)= \inf \left\{ \cE^\mu(u)\, \Big|\  u \in
	\cF^\mu,\ \int_E u^2 dm = 1\right\}.
\end{equation}
If, in addition, 
 the basic measure 
$m$ belongs to $\cK_\infty$, then it holds that $\lambda(\mu)>1$ if and only if 
$\gamma(\mu)>0$ (Theorem \ref{r-g}). 
It is shown in \cite{BNV} that for an elliptic (not necessarily symmetric) operator 
in a bounded domain of $\bR^d$,  ({\bf RMP}) holds if and only 
if the generalized principal eigenvalue, an extension of $\gamma(\mu)$ defined 
through the so-called {\it Donsker-Varadhan {I}-function}, is positive.

We here note that the semi-group of the self-adjoint operator $\cL^\mu$, $T^\mu_t:=\exp(t\cL^{\mu})$, 
is expressed 
by the Feynman-Kac semi-group: For $f\in{\mathscr B}_b(E)\cap L^2(E;m)$
$$
T^\mu_tf(x)=p^\mu_tf(x):=\bfE_x\left(e^{A^{\mu}_t}f(X_t)\right) \ \ m\text{-a.e.}\ x,
$$
where ${\mathscr B}_b(E)$ is the space of bounded Borel functions on $E$.
Then the key to the proof of Theorem \ref{RMP1} is as follows: For showing the direction 
$(\Longrightarrow)$, the first crucial fact is that
 %  A crucial fact for the proof of the opposite direction $(\Longleftarrow)$  is that 
 if $h$ belongs to $\cS_0^{\mu,\rm sub}$,
 then 
 \begin{align} \la{ku}
h(x)&\le\bfE_x\left(e^{A^\mu_{\sigma_n}}h^+(X_{\sigma_n})\right)\ \ {\rm q.e.}\ x.
\end{align}
 Here $h^+=h\vee 0$ and $\{\sigma_n\}$ is a certain sequence of stopping times such that 
 $\sigma_n\uparrow \zeta$. 
 ``q.e.'' is an abbreviation of ``{\it quasi-everywhere}''(cf. \cite[Section 2.1]{FOT}).
% $p^\mu_t h(x)\ge h(x)$. 
The proof of (\ref{ku}) is given by an application of the Kuwae's 
 {generalized Fukushima decomposition} to the {\it resurrected Hunt process} 
$X^{\rm res}= (\bfP^{\rm res}_x, X_t, \zeta)$ generated by the  {\it resurrected form}  
$(\cE^{\rm res}, \cF^{\rm res})$. Here the {resurrected form}  
$(\cE^{\rm res}, \cF^{\rm res})$ is a regular Dirichlet form 
constructed by removing the killing part from $(\cE, \cF)$ (\cite[Theorem 5.2.17]{CF}).
The life time $\zeta$ of $X$ is written as $\zeta=\zeta^p\wedge\zeta^i$ 
on $\{\zeta<\infty\}$, where $\zeta^p$ and 
$\zeta^{i}$ are the {\it predictable} part and the {\it totally inaccessible} part of $\zeta$. It holds true that  
 $X_{\zeta-}=\infty$ on $\{\zeta=\zeta^p<\infty\}$ and $X_{\zeta-}\in E$ on $\{\zeta=\zeta^i<\infty\}$.
%$X_{\zeta-}\neq\Delta$the {\it totally inaccessible} part from the life time $\zeta$. 
The killing measure $k$ in the Beurling-Deny formula of $(\cE, \cF)$ is identified with 
the Revuz measure corresponding to the PCAF $\left(1_{\{X_{\zeta-}\neq\infty,\,\zeta\leq t\}}\right)^{p}$, the dual predictable 
projection of $1_{\{X_{\zeta-}\neq\infty,\,\zeta\leq t\}}$ (cf. \cite[Theorem 5.3.1]{FOT}). 
 Hence the life time of $X^{\rm res}$ 
turns out to be a {predictable} stopping time because
 $(\cE^{\rm res}, \cF^{\rm res})$ has no killing part. In other words, 
%the paragraph below Theorem 5.3.1), 
the removal of the killing part from the Dirichlet form $(\cE,\cF)$ is 
equivalent to that of the {totally inaccessible} part from the life time $\zeta$. 
%As a result, the life time of $X^{\rm res}$ 
%turns out to be a {\it predictable} stopping time. 
The original process $X$ is 
 the subprocess of $X^{\rm res}$ generated by  
the multiplicative functional $\exp(-A^k_t)$, where $A^k_t$ is the PCAF of $X^{\rm res}$  
 corresponding to the killing measure $k$ (\cite[Theorem 5.2.17]{CF}).
 These facts enable us to imitate the argument in the case of strongly local Dirichlet forms.
% This obsevation make the proof of Lemma \ref{invariant} easier. 

The second crucial fact for the direction 
$(\Longrightarrow)$ is that the condition $\lambda(\mu)>1$ is equivalent with the gaugeability (\cite{C}):
 \bequ\la{gau}
 \lambda(\mu)>1 \Longleftrightarrow \sup_{x\in E}\bfE\left(e^{A^\mu_\zeta}\right)<\infty.
 \eequ
 Owing to the gaugeability, the reverse Fatou lemma is applicable to the equation
 (\ref{ku}) and
\bequ\la{Fa}
h(x)\le\varlimsup_{n\to\infty}\bfE_x\left(e^{A^\mu_{\sigma_n}}h^+(X_{\sigma_n})\right)
\le\bfE_x\left(e^{A^\mu_{\zeta}}\varlimsup_{n\to\infty}h^+(X_{\sigma_n})\right)\ \ {\rm q.e.}\ x.
\eequ
We will show in Lemma \ref{s-i} that for $x\in E$
$$
\{X_{\sigma_n}\}_{n=1}^\infty\in \bS\ \ \bfP_x\text{-a.s.}\ \text{on}\ \ 
\displaystyle{\cap_{n=1}^\infty}\{\sigma_n<\zeta\}. 
$$
Hence noting that 
$$
\varlimsup_{n\to\infty}h(X_{\sigma_n})\le 0\ \Longleftrightarrow\ \varlimsup_{n\to\infty}h^+(X_{\sigma_n})=0,
$$
we see the right hand side of (\ref{Fa}) equals 0. 

For the proof of the opposite direction 
$(\Longleftarrow)$, we apply the fact that for $\mu\in\cK_\infty$ the extended Dirichlet space $\cF_e$ is 
 compactly embedded in $L^2(E;\mu)$, which leads us to that 
 there exists a strictly positive function 
$h\in\cF_e\cap C(E)$  
 attaining the infimum of (\ref{j-ground2}). Moreover, if $\lambda(\mu)\le 1$, then the function $h$ 
 satisfies $p^\mu_th\ge h$ (Lemma \ref{e-2}) and for any $\{x_n\}\in{\bS}$, 
 $\varlimsup_{n\to\infty }h(x_n)\leq 0$ (Theorem \ref{RMP}). Consequently,  
 the opposite direction 
 is derived and complete the proof of Theorem \ref{RMP1}.
%, which implies $h(x)\le 0$. 

\medskip
As remarked above, the original process $X$ is regarded as a subprocess of $X^{\rm res}$ by  
$\exp(-A^k_t)$. As a result, we obtain the following equations: 
\begin{align}\la{pre-ia}
{\rm (i)}\ \  &\bfP_x(\zeta=\zeta^p<\infty)=\bfE^{\rm res}_x\left(e^{-A^k_{\zeta}};\zeta<\infty\right)\nonumber\\
{\rm (ii)}\ \ &\bfP_x(\zeta=\zeta^i<\infty)=1-\bfE^{\rm res}_x\left(e^{-A^k_{\zeta}}\right)\\
{\rm (iii)}\ \ &\bfP_x(\zeta=\infty)=\bfE^{\rm res}_x\left(e^{-A^k_{\infty}};\zeta=\infty\right)\nonumber
\end{align}
(Corollary \ref{masa}), consequently
 \begin{align}\la{g-c}
1&=\bfP_x(\zeta<\infty)+\bfP_x(\zeta=\infty)\nonumber\\
&=\bfP_x(\zeta=\zeta^p<\infty)+\bfP_x(\zeta=\zeta^i<\infty)+\bfP_x(\zeta=\infty)\\
&=\bfE^{\rm res}_x\left(e^{-A^k_{\zeta}};\zeta<\infty\right)
+\left(1-\bfE^{\rm res}_x\left(e^{-A^k_{\zeta}}\right)\right)+
\bfE^{\rm res}_x\left(e^{-A^k_{\infty}};\zeta=\infty\right).\nonumber
\end{align} 

Keller and Lenz \cite[Definition 1.1]{KL} define the  {\it stochastic completeness at infinity} (SCI for short) of 
Dirichlet forms
$(\cE,\cF)$ on graphs (or the corresponding symmetric 
Markov process $X$). In \cite[Section 7.9]{KLW}, they give a probabilistic interpretation of this 
concept, that is, a probabilistic characterization of {stochastic completeness at infinity} is equivalent to that 
\bequ\la{ngc-0}
\bfE^{\rm res}_x\left(e^{-A^k_{\zeta}};\zeta<\infty\right)=0.
\eequ
Masamune and Schmidt \cite{MS} call this property the {\it generalized conservation property} 
  and
  extend Khasminskii’s criterion for the generalized conservation property. Moreover, they 
  prove the equivalence
with the ordinary conservation property of time changed processes (\cite[Theorem 3.5]{MS}).
We see from (\ref{pre-ia})(i) that (SCI) is also expressed as 
\bequ\la{ngc}
 ({\bf SCI})\qquad \ \ \bfP_x(\zeta=\zeta^p<\infty)=0,\qquad \ \ 
\eequ
%by (\ref{pre-ia}(i)).
which make the image of (SCI) concrete probabilistically. 
As a result of (\ref{ngc-0}), we see that if $X^{\rm res}$ is conservative, 
$\bfP^{\rm res}_x(\zeta=\infty)=1$, then $X$ has (SCI). 
If $X$ itself is conservative, $\bfP_x(\zeta=\infty)=1$, 
then the killing part disappears and $X^{\rm res}$ is identified with $X$, 
consequently $X$ has (SCI). 

We see that the next equivalences hold: 
\begin{align}\la{gcp-eq}
%\bfP_x(\zeta=\zeta^p<\infty)=0\ &\Longleftrightarrow\  
\bfP_x(\zeta=\zeta^i<\infty)=1\Longleftrightarrow\bfE^{\rm res}_x\left(e^{-A^k_{\zeta}}\right)=0\Longleftrightarrow\bfP^{\rm res}_x\left({A^k_{\zeta}}=\infty\right)=1.
\end{align}
When we denote by $X^{{\rm res},k}$ the time changed process of 
$X^{\rm res}$ by $A^k_t$,  
${A^k_{\zeta}}$ is regarded as the life time of $X^{{\rm res},k}$ 
by \cite[(65.2)]{Sh}, and thus $\bfP^{\rm res}_x({A^k_{\zeta}}=\infty)=1$
 implies the
conservativeness of $X^{{\rm res},k}$. 

Here we define the {\it explosion by killing} (({EK}) for short) by
\bequ
 ({\bf EK})\qquad \ \ \bfP_x(\zeta=\zeta^i<\infty)=1.\qquad \ \ 
\eequ
(EK) is equivalent to (SCI) and $\bfP_x(\zeta<\infty)=1$.
We can conclude from the discussion above that (EK) of $X$ is equivalent to 
the conservativeness of the time changed process of $X^{\rm res}$ 
by $A^k_t$. 
Moreover,  we prove in Theorem \ref{Li} and Remark \ref{exten} that (EK)  
is also equivalent to the following Liouville type theorem: 
A function $h\in{\cS}^{\rm sub}\cap{\cS}^{\rm sup}$, then $h(x)=0$ 
for q.e. $x$. Here 
${\cS}^{\rm sub}={\cS}^{0,\rm sub}$ (${\cS}^{\rm sup}={\cS}^{0,\rm sup}$).

For $\lambda\ge 0$ define $\cL^{(\lambda)}$ as the self-adjoint 
operator associated with the closed symmetric form 
$(\cE_\lambda(:=\cE+\lambda(\cdot,\cdot)_m),\cF)$, $(-\cL^{(\lambda)} u,v)_m=\cE_\lambda(u,v)$. 
We can extend the spaces, $\cS^{\rm sub}$ and $\cS^{\rm sup}$ or the concepts,
(SCP) and (EK) to those associated 
with $(\cE_\lambda,\cF)$, and denote these by $\cS_\lambda^{\rm sub}$ and $\cS_\lambda^{\rm sup}$ or 
(SCP$_\lambda$) and (EK$_\lambda$). Noting that for $\lambda>0$
\begin{align*}
\bfE^{\rm res}_x\left(e^{-A^k_{\zeta}};\zeta<\infty\right)=0&\ \Longleftrightarrow\ 
\bfE^{\rm res}_x\left(e^{-A^k_{\zeta}-\lambda\zeta};\zeta<\infty\right)=0\\
&\ \Longleftrightarrow\ \bfE^{\rm res}_x\left(e^{-A^k_{\zeta}-\lambda\zeta}\right)=0,
\end{align*}
we see that (SCP) is equivalent to (SCP$_\lambda)$ or (EK$_\lambda)$ for each $\lambda>0$,
and thus $h\in\cS_\lambda^{\rm sub}\cap\cS_\lambda^{\rm sup}$ equals 0 q.e. if (SCP) holds.
 
 Let $X^D=(\bfP^D_x,X_t,\zeta)$ be the absorbing symmetric 
$\alpha$-stable process ($0<\alpha<2$) on a bounded Lipschitz open set 
$D\subset\bR^d$. Then $\bfP^D_x(\zeta<\infty)=1$ 
and $\bfP^{D,\rm{res}}_x(\zeta=\infty)=1$ for $\alpha\le 1$ 
(\cite[Theorem 1.1]{BBC}). Note that in \cite{BBC} the resurrected process 
$X^{D,{\rm res}}$ of $X^D$
 is called 
a {\it censored stable process}. 
Hence we see that if $\alpha\le 1$, then   
the Liouville property with respect to $X^D$ holds (Example \ref{bog}).

Finally, we would like to make a comment on the strong maximum principle. 
 In our point of view the strong maximum principle to 
the operator $\cL(=\cL^{0})$: If $h\in {\cS}^{\rm sub}\cap C(E)$, then 
$h$ is constant $M=\sup_{x\in E}h(x)$
   	or $h(x)<M$ for all $x\in E$, follows from the irreducibility of the Markov process 
$X$ generated by the regular Dirichlet form $(\cE,\cF)$ (Theorem \ref{BP-1}).  
More precisely, if a symmetric Markov process $\bfP_x$ satisfying the absolute continuity condition (AC) 
(for definition, see Section 2) is irreducible, then for any Borel set $G$ of positive capacity 
$$
\bfP_x(\sigma_G<\zeta)>0\ \  \text{for all}\ x\in E,
$$
where $\sigma_G=\inf\{t>0\mid X_t\in G\}$.

\section{subsolutions and supersolutions}
%\section{Schr\"odinger forms}
Let $E$ be a locally compact separable metric space and $m$ a positive
Radon measure on $E$ with full topological support.
Let $(\cE, \cF)$ be a regular symmetric Dirichlet form on
$L^2(E;m)$. 
We denote by $u\in {\cF}_{\loc}$ if for any relatively compact open set $D$ there exists a function $v\in\cF$
such that $u=v$ $m$-a.e. on $D$. 
%Let $\cL$ be the self-adjoint operator associated with $\cE$. 
%We denote by $\cF_e$ the family of $m$-measurable functions $u$ on
%$E$ such that $|u| < \infty$ $m$-a.e. and there exists an $\cE$-Cauchy
%sequence $\{ u_{n}\}$ of functions in $\cF$ such that %$\lim_{n \to \infty} u_{n} = u$ $m$-a.e.
%We call $\cF_e$ the \emph{extended Dirichlet space} of $(\cE, \cF)$.
 Let $X = (\Om, 
 %\sF, \{\sF_{t}\}_{t \ge 0}, 
 \{\bfP_x\}_{x \in E}, \{X_t\}_{t \ge 0},
\zeta)$ be the symmetric Hunt process generated by $(\cE, \cF)$,
where 
%$\{ \sF_t\}_{t \ge 0}$ is the augmented filtration and 
$\zeta $ is the lifetime of $X$.
 Denote by $\{p_t\}_{t\geq 0}$ and  $\{R_\alpha\}_{\alpha \geq 0} $ the semi-group and resolvent 
of $X$: For a bounded Borel function $f$ on $E$
$$
p_tf(x)=\bfE_x(f(X_t);t<\zeta),\ \ \ \ \ R_\alpha f(x)=\int_0^\infty e^{-\alpha t} p_tf(x)dt.
$$
Through this paper we assume that $X$ satisfies next two conditions:  

\bigskip
%\noindent
% \begin{enumerate}
%\begin{enumerate}
%[(1)]\begin{description}
	{\textbf{Irreducibility (I)}}. \ If a Borel set $A$ is $\{p_t\}_{t\geq 0}$-invariant, i.e., 
      $p_t(1_Af)(x)=1_Ap_tf(x)$\ $m$-a.e. for any $f\in
      L^2(E;m)\cap {\mathscr B}_b(E)$  
      and $t>0$, then $A$ satisfies either $m(A)=0$ or $m(E\setminus A)=0$.

\medskip      
{\textbf{Strong Feller Property (SF)}}. \  For each $t>0$,
      $p_t({\mathscr  B}_b(E))\subset C_b(E)$, 
      where $C_b(E)$ is the space of bounded continuous functions on $E$.
%\end{enumerate}
%\end{description}
\medskip

We remark that \textbf{(SF)} implies the following condition
(\cite[Theorem 4.2.4]{FOT}).
%\begin{description}

\medskip
{\textbf{Absolute Continuity Condition (AC)}}. \ The transition probability and resolvent of $X$
 is absolutely continuous with respect to $m$;  for each $t>0,\ \alpha>0$ and $x\in E$
$$
p_tf(x)=\int_Ep(t,x,y)f(y)m(dy),\ \ \ R_\alpha f(x)=\int_Er_\alpha(x,y)f(y)m(dy).
$$
% \end{description}

\medskip
%\begin{rem}
% We see from the assumption ({\bfseries SF}) that the semi-group
% $\{P_{t}\}_{t>0}$ admits an integral kernel $\{p(t,x,y)\}_{t>0}$ with
% respect to the measure $m$.
%\end{rem}

We introduce a subspace $\cF^\dagger_{\loc}$ of $\cF_{\loc}$ by
\bequ\la{dag}
\cF^\dagger_{\loc}=\left\{u\in\cF_{\loc}\,\Big|\, \mu^j_{\langle u\rangle}\ 
\text{is a Radon measure on $E$}\ \right\},
\eequ
where $\mu^j_{\langle u\rangle}$ is a positive measure defined in (\ref{mj}).
Denoting by $\cL$ the self-adjoint 
operator associated with the Dirichlet form $(\cE,\cF)$,
$(-\cL u,v)_m=\cE(u,v)$, we call a function $h\in \cF^\dagger_{\loc}\cap L^\infty_{\loc}$ 
a {\it solution} ({\it subsolution}, {\it supersolution}) to
$\cL u=0$ if 
\bequ\la{subsol}
\cE(h,\varphi)=0 \ (\leq 0,\ \geq 0) \ \text{for all}\ \varphi\in\cF_+\cap C_{0}(E).
\eequ
%Here $C_0(E)$ is the set of continuous functions on $E$ with compact support. 
For $u\in \cF^\dagger_{\loc}$ and $\varphi\in\cF_+\cap C_{0}(E)$, $\cE(u,\varphi)$ 
is well-defined, in other words,
$$
\left|\iint_{E\times E}(u(x)-u(y))(\varphi(x)-\varphi(y))J(dx,dy)\right|<\infty,
$$ 
which is shown by the same argument on the jumping part $\cE^j$ in the proof of Lemma \ref{nuh} below.

We write ${\bf S}\,({\bf S}^{\rm sub},\ {\bf S}^{\rm sup})$ for 
the space of {solutions} ({subsolutions}, {supersolutions}) 
and introduce the function spaces:
\begin{align}\la{sol}
\cS^{\rm sub}&= \{h\in {\bf S}^{\rm sub}\mid   \|h^+\|_\infty<\infty\},\\
\cS^{\rm sup}&= \{h\in {\bf S}^{\rm sup}\mid   \|h^-\|_\infty>-\infty\}.
\end{align}
We further introduce the function spaces as follows: 
\begin{align}\la{exc}
\widetilde{\cS}^{\rm sub}&= \{ h\in\cF^\dagger_{\loc}\cap L^\infty_{\loc}\mid  
 \|h^+\|_\infty<\infty,\   p_th \ge h\ {\rm q.e.} \},\\
\widetilde{\cS}^{\rm sup}&= \{ h\in\cF^\dagger_{\loc}\cap L^\infty_{\loc}\mid  
 \|h^-\|_\infty>-\infty,\   p_th \le h\ {\rm q.e.}\}.
\end{align}

%%%%%%%%%%%%%%%%%%%%%%%%%%%%%%%%%%%%%%%%%%%%%%%%%%%%%%%%%%%%%%%%%%
By using the argument in the proof 
of \cite[Theorem 5.1]{Mi}, we have the next lemma.

\begin{lem}\label{G-tildeH}
Let $h\in\cF^\dagger_{\loc}\cap L^\infty_{\loc}$ and $\varphi\in\cF\cap C_0(E)$. Let
$K={\rm supp}[\varphi]$ and $G$ a relatively compact open set containing $K$. Let $\{\psi_n\}$ be a sequence in $\cF\cap C_0(E)$
such that $0\le\psi_n\le 1,\ \psi_n=1$ on $G$, $\psi_n\uparrow 1$. Then
$$	
\lim_{n\to\infty}\cE(h\psi_n,\varphi)=\cE(h,\varphi).
$$
\end{lem}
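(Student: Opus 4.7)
The plan is to decompose the Dirichlet form via the Beurling--Deny formula as $\cE=\cE^{(c)}+\cE^{(j)}+\cE^{(k)}$ and analyze each piece separately. I would first observe that $h\psi_n\in\cF$: on the compact set $\supp[\psi_n]$, $h$ agrees with an element of $\cF$ (since $h\in\cF_{\loc}$) and is bounded (since $h\in L^\infty_{\loc}$), so the product lies in $\cF\cap L^\infty$ and all the pairings below are well defined.

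For the strongly local and killing parts, convergence in fact improves to equality for every $n$. Since $\psi_n\equiv 1$ on the open set $G\supset K=\supp[\varphi]$, the derivation property of the local energy measure $\Gamma^{(c)}$ gives $d\Gamma^{(c)}(h\psi_n,\varphi)=d\Gamma^{(c)}(h,\varphi)$ on $K$ (the $h\,d\Gamma^{(c)}(\psi_n,\varphi)$ term vanishes because $\psi_n$ is locally constant on $K$), hence $\cE^{(c)}(h\psi_n,\varphi)=\cE^{(c)}(h,\varphi)$; likewise $\cE^{(k)}(h\psi_n,\varphi)=\int_E h\psi_n\varphi\,dk=\int_E h\varphi\,dk=\cE^{(k)}(h,\varphi)$.

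The substance of the argument is the jumping part. I would use the algebraic identity
$$(h\psi_n)(x)-(h\psi_n)(y)=\tfrac12(\psi_n(x)+\psi_n(y))(h(x)-h(y))+\tfrac12(h(x)+h(y))(\psi_n(x)-\psi_n(y))$$
to split $\cE^{(j)}(h\psi_n,\varphi)=A_n+B_n$. The term $A_n$ converges to $\cE^{(j)}(h,\varphi)$ by dominated convergence: $\tfrac12(\psi_n(x)+\psi_n(y))\to 1$ pointwise, the integrand is dominated by $|h(x)-h(y)||\varphi(x)-\varphi(y)|$, and Cauchy--Schwarz together with $\mu^j_{\langle h\rangle}(K)<\infty$ and $\varphi\in\cF\cap C_0(E)$ supply the required $J$-integrability.

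The main obstacle is to show $B_n\to 0$. The integrand vanishes unless at least one of $x,y$ lies in $K$ (otherwise $\varphi(x)=\varphi(y)$) and at least one lies outside $G$ (otherwise $\psi_n(x)=\psi_n(y)$); by symmetry of $J$ the region of integration reduces to $K\times(E\setminus G)$. The crucial integrability input is
$$J(K\times(E\setminus G))<\infty,$$
which I would obtain by choosing $\chi\in\cF\cap C_0(E)$ with $\chi=1$ on $K$ and $\supp[\chi]\subset G$, and noting $J(K\times(E\setminus G))\le\iint(\chi(x)-\chi(y))^2\,J(dx,dy)<\infty$ since $\chi\in\cF$. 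Writing $h(y)=h(x)+(h(y)-h(x))$ then splits $B_n$ into two pieces: one bounded by $\|h\|_{L^\infty(K)}\|\varphi\|_\infty\iint_{K\times(E\setminus G)}(1-\psi_n(y))\,J(dx,dy)$, and one controlled via Cauchy--Schwarz by $\mu^j_{\langle h\rangle}(K)^{1/2}\|\varphi\|_\infty\bigl(\iint_{K\times(E\setminus G)}(1-\psi_n(y))^2\,J(dx,dy)\bigr)^{1/2}$. Both tend to $0$ by dominated convergence, using $\psi_n\uparrow 1$ pointwise and the finiteness of $J$ on $K\times(E\setminus G)$. Summing the three Beurling--Deny contributions yields the lemma.
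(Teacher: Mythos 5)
Your proposal is correct and follows essentially the same route as the paper: only the jump part of the Beurling--Deny decomposition is genuinely $n$-dependent, and it is handled by dominated convergence using exactly the two integrability inputs the paper relies on, namely $J(K\times G^c)<\infty$ and the Radon property of $\mu^j_{\langle h\rangle}$ (your product-rule identity splitting $\cE^{(j)}(h\psi_n,\varphi)=A_n+B_n$ is just a cosmetic reorganization of the paper's direct splitting of the integration domain into $K\times K$, $K\times(K^c\cap G)$ and $K\times G^c$). Your explicit cutoff argument for $J(K\times G^c)<\infty$ supplies a detail the paper merely asserts, which is a welcome addition.
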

%%%%%%%%%%%%%%%%%%%%%%%%%%%%%%%%%%%%%%%%%%%%%%%%%%%%%%%%%%%%%%%%%%
\begin{proof}
On account of $h \psi_n =h$ on $G$, we see from Beurling-Deny formula \cite[Theorem 3.2.1]{FOT} that 
 $\cE(h \psi_n , \varphi)$ is equal to
\begin{align}\la{4}
%\cE(h& \psi_n_k , \varphi) \\
&\;\frac{1}{2} \int_E d\mu_{\langle h, \varphi \rangle}^c 
+ \iint_{K \times K} (h(x)-h(y))(\varphi(x)-\varphi(y)) J(dx, dy) \nonumber \\
&\hspace{0.5em} + 2\iint_{K \times (K^c \cap G)} (h(x)-h(y))(\varphi(x)-\varphi(y)) J(dx, dy) \\
&\hspace{0.5em} + 2\iint_{K \times G^c} (h(x)-h(y)\psi_n (y))\varphi(x)\, J(dx, dy)
+ \int_E h\varphi\, d\kappa.\nonumber
\end{align} 
%where $F_n=$supp$[\varphi]$.
Since $J(K\times G^c)<\infty$ and for $h\in\cF^\dagger_{\loc}$ 
$$
\int_E\varphi(x)d\mu^j_{\<h\>}(x)<\infty,
$$
we see  
\begin{align*}
&\ \ \ \  |(h(x)-h(y)\psi_n(y))\varphi(x)|\\
&\leq|h(x)-h(x)\psi_n (y)|\varphi(x)+|h(x)-h(y)||\psi_n (y)\varphi(x)|\\
&\leq|h(x)|\varphi(x)1_{G^c}(y)+|h(x)-h(y)|\varphi(x)\in L^1(K\times G^c;J).
\end{align*}
Hence by the 
 dominated convergence theorem, the fourth term of (\ref{4}) tends to 
\begin{align*}
&
2\iint_{K \times G^c} (h(x)-h(y))\varphi(x)J(dx, dy)\\
&\qquad =2\iint_{K \times G^c} (h(x)-h(y))(\varphi(x)-\varphi(y))J(dx, dy)
\end{align*}  
as $n\to\infty$. Hence 
$$
\cE(h, \varphi)= \lim_{n\to\infty} \cE(h\psi_n, \varphi). 
$$
%Hence, $\{F_n\}$ is a desired one.\vspace{0.6em } \\	
 \end{proof}

\begin{lem}\label{H-tildeH}
It holds that
$$	
\widetilde{\cS}_-^{\rm{sub}}\subset{\cS}_-^{\rm{sub}},\ \ \ \ \widetilde{\cS}_+^{\rm{sup}}\subset{\cS}_+^{\rm{sup}}.
$$
\end{lem}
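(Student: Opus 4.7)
The plan is to prove $\widetilde{\cS}_-^{\rm{sub}} \subset \cS_-^{\rm{sub}}$; the companion inclusion $\widetilde{\cS}_+^{\rm{sup}} \subset \cS_+^{\rm{sup}}$ is then immediate by replacing $h$ by $-h$, which turns the hypothesis $p_t h \le h$ into $p_t(-h) \ge -h$ and the conclusion $\cE(h,\varphi) \ge 0$ into $\cE(-h,\varphi) \le 0$. So fix $h \in \widetilde{\cS}_-^{\rm{sub}}$ and $\varphi \in \cF_+ \cap C_0(E)$; the goal is $\cE(h,\varphi) \le 0$.

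First I would apply Lemma~\ref{G-tildeH} with its cutoffs $\{\psi_n\}$ to reduce the problem to showing $\cE(h\psi_n,\varphi) \le 0$ for every $n$. Because $h \in \cF_{\loc}$ is locally bounded and $\psi_n \in \cF \cap C_0(E)$ is bounded with compact support, a standard truncation argument produces $v \in \cF \cap L^\infty$ with $v = h$ $m$-a.e.\ on a neighborhood of $\supp\psi_n$, whence $h\psi_n = v\psi_n \in \cF$. The generator formula for $\cE$, applied to $h\psi_n, \varphi \in \cF$, then gives
$$
\cE(h\psi_n,\varphi) \;=\; \lim_{t \downarrow 0} \frac{1}{t}\bigl(h\psi_n, (I - p_t)\varphi\bigr)_m,
$$
after using the $L^2$-symmetry of $p_t$.

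The key step is the chain of inequalities
$$
\tfrac{1}{t}\bigl(h\psi_n, (I - p_t)\varphi\bigr)_m \;\le\; \tfrac{1}{t}(h - p_t h, \varphi)_m \;\le\; 0, \qquad t > 0,\ n \ge 1.
$$
Since $\psi_n \equiv 1$ on $\supp\varphi$ the left-hand side equals $\tfrac{1}{t}\bigl[(h,\varphi)_m - (h\psi_n, p_t\varphi)_m\bigr]$; the bound $h \le h\psi_n \le 0$ together with $p_t\varphi \ge 0$ forces $(h\psi_n, p_t\varphi)_m \ge (h, p_t\varphi)_m$, and rewriting $(h, p_t\varphi)_m = (p_t h, \varphi)_m$ gives the first inequality. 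The second inequality follows from $p_t h \ge h$ q.e.\ and $\varphi \ge 0$. Sending $t \downarrow 0$ for each fixed $n$ yields $\cE(h\psi_n,\varphi) \le 0$, and then $n \to \infty$ via Lemma~\ref{G-tildeH} gives $\cE(h,\varphi) \le 0$.

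The one delicate point, and the main obstacle, is justifying the symmetry identity $(h, p_t\varphi)_m = (p_t h, \varphi)_m$ when $h$ is not assumed to lie in $L^2(E;m)$. I would verify it via Fubini using the absolute continuity condition (AC): the hypothesis $p_t h \ge h$ for $h \le 0$ is equivalent to $p_t|h| \le |h|$, and combined with $\int |h|\varphi\, dm < \infty$ (from $\varphi \in C_0(E)$ and $h \in L^\infty_{\loc}$) this yields $\int p_t|h|\,\varphi\, dm \le \int |h|\varphi\, dm < \infty$, which provides the absolute integrability needed for both Fubini and the finiteness of $(p_t h, \varphi)_m$. Once this is settled the remainder is a monotone comparison and two elementary limit passages.
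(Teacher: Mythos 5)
Your proof is correct and follows essentially the same route as the paper: cut off with the $\psi_n$ of Lemma~\ref{G-tildeH}, establish $\cE(h\psi_n,\varphi)\le 0$ via the generator formula together with $h\le h\psi_n\le 0$ and $p_th\ge h$, then pass to the limit. The only (harmless) divergence is that you move $p_t$ onto $\varphi$ and must justify $(h,p_t\varphi)_m=(p_th,\varphi)_m$ for non-$L^2$ $h$, whereas the paper keeps $p_t$ acting on $h\psi_n\in\cF\subset L^2$ and bounds $p_t(h\psi_n)\ge p_th\ge h$, which sidesteps that Fubini step entirely.
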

%%%%%%%%%%%%%%%%%%%%%%%%%%%%%%%%%%%%%%%%%%%%%%%%%%%%%%%%%%%%%%%%%%
\begin{proof}
% (${\cH}_+^\mu\subset\widetilde{\cH}_+^\mu$).\  
We only prove that $\widetilde{\cS}_-^{\rm{sub}}\subset{\cS}_-^{\rm{sub}}$. 
%Since for $h$ in ${\cS}_+^{\rm{sup}}$ (resp. $\widetilde{\cS}_+^{\rm{sup}}$), then 
%$-h$ is in ${\cS}_+^{\rm{sub}}$ (resp. $\widetilde{\cS}_+^{\rm{sub}}$), it is enough to prove 
%that $\widetilde{\cS}_-^{\rm{sub}}\subset{\cS}_-^{\rm{sub}}$. 
%we see that $\widetilde{\cS}_+^{\rm{sup}}\subset{\cS}_+^{\rm{sup}}$.

Let $\varphi\in\cF_+\cap C_0(E)$ and write $K$ for the support of $\varphi$. Take 
a relatively compact open set $G$ and a sequence 
$\{\psi_n\}\subset \cF_+\cap C_0(E)$ as in Lemma \ref{G-tildeH}.
Then for $h\in\widetilde{\cS}_-^{\rm{sub}}$, 
$\cE(h \psi_n, \varphi)  \le 0$.
 Indeed, since $h\psi_n\in\cF$ and $p_t(h\psi_n) \ge p_th \ge h$, 
\begin{align*}
\cE(h \psi_n, \varphi)&=\lim_{t \downarrow 0} \frac{1}{t} \bigl( h\psi_n - p_t(h\psi_n), \varphi \bigr)_m\\
&\le \varlimsup_{t \downarrow 0} \frac{1}{t} \biggl( \bigl( h, \varphi \bigr)_m 
- \bigl(p_th, \varphi \bigr)_m \bigr)\le 0,
\end{align*}
where $(\ ,\ )_m$ is the inner product of $L^2(E;m)$. 
By Lemma \ref{G-tildeH} we have  
$$
\cE(h, \varphi)= \lim_{n\to\infty} \cE(h\psi_n, \varphi) \le 0. 
$$
%Hence, $\{F_n\}$ is a desired one.\vspace{0.6em } \\	
 \end{proof}

 %	As an application of the generalized Fukushima decomposition
 %	for $\cF^\dagger_{\loc}$ (Kuwae \cite{Ku}, \cite{Ku1}), we prove the strong maximum principle.
 	
 	\bl\la{nuh}
 	For  $h\in\cS^{\rm sub}$, there exists 
 	a smooth positive Radon measure $\nu_h$ such that 
 	\bequ\la{me-n}
 	\cE(h,\varphi)=-\int_E\varphi d\nu_h,\ \ \varphi\in \cF\cap C_0(E).
 	\eequ
 	\el
 	 \begin{proof}
 	 	Denote ${\mathcal C}=\cF\cap C_0(E)$ and define a  function $I$  on $\cC$ by
 	 	\begin{equation}\la{i-r}
 	 		I(\varphi)=-\cE(h,\varphi ), \quad \varphi \in{\mathcal C}. 
 	 	\end{equation}	
 	 	We confirm in \cite[Lemma 4.7]{T3} that ${\mathcal C}$
 	 	is a {\it Stone vector lattice} and $I(\varphi )$ is a {\it pre-integral}
 	 	 (For these definitions, see \cite[p.143]{Dud}). We then know 
 	 	from Stone-Daniell theorem \cite[Theorem 4.5.2]{Dud} that there exists a 
 	 	positive Borel measure
 	 	$\nu_h$ such that  
 	 	\begin{equation*}
 	 		I(\varphi )=\int_E\varphi d\nu_h, \quad \varphi \in{\mathcal C}. 
 	 	\end{equation*} 
 	 	We see by the relation (\ref{me-n}) that $\nu_h$ is Radon. Moreover, we see 
 	 	by the same argument as in \cite[Lemma 4.7]{T3}, \cite[Lemma 4.1]{Mi} that 
 	 	the measure $\nu_h$ is smooth. Indeed, let $K$ be a compact set with Cap$(K)	=0$. 
 	 	Take relatively compact open sets $G$ and $D$ such that
 	 	 $K\subset G\subset\bar{G}\subset D\subset
 	 	E$. Let $\{\varphi_n\}$ be a subset of $\cF_+\cap C_0(G)$ such that 
 	 	$\varphi_n\geq 1$ on $K$, and 
 	 	$\lim_{n\to\infty}\cE(\varphi_n)=0$. 
 	 	The existence of such 
 	 	a sequence $\{\varphi_n\}$ follows from 
 	 	\cite[Lemma 2.2.7, Theorem 4.4.3]{FOT}. Take $\psi \in \cF_+\cap C_0(D)$ such that  $\psi\leq 1$ 
 	 	and  $\psi=1$ on $G$. Let $\cE^c,\cE^j,\cE^k$ be the strongly local part, jumping part, killing part of 
 	 	$\cE$ in the Beurling-Deny formula.
 	 	
 	 	We see from the local property of $\cE^c$ and $\cE^k$ that 
 	 	$$|\cE^c(h,\varphi_n)|\le
 	 	\cE^c(h\psi)^{1/2}\cE^c(\varphi_n)^{1/2},\ \ 
 	 	|\cE^k(h,\varphi_n)|\le\cE^k(h\psi)^{1/2}\cE^k(\varphi_n)^{1/2}.
 	 	$$ 	 	
 	 	%we can show 
 	 	%$\cE(h,\varphi_n)\ge\cE(h\psi,\varphi_n)$ by the argument 
 	 	%in the proof of \cite[Lemma 4.1]{Mi}. Indeed, 
 	 	The jumping part $\cE^j(h,\varphi_n)$ equals 
 	 	\begin{align*}
 	 		\cE^j(h,\varphi_n)=& \iint_{E \times E} (h(x)-h(y))(\varphi_n(x)-\varphi_n(y)) J(dx, dy)\\
 	 		=& \iint_{G \times G} (h(x)-h(y))(\varphi_n(x)-\varphi_n(y)) J(dx, dy)\\
 	 		+&2 \iint_{G \times G^c} (h(x)-h(y))(\varphi_n(x)-\varphi_n(y)) J(dx, dy).
 	 	\end{align*}
 	 	The first term of the right hand side equals
 	 	%\begin{align*}
 	 	$$
 	 	\iint_{G \times G} (h(x)\psi(x)-h(y)\psi(y))(\varphi_n(x)-\varphi_n(y)) J(dx, dy)
 	 	\le\cE^j(h\psi)^{1/2}\cE^j(\varphi_n)^{1/2}
 	 	$$
 	 	%\end{align*}
 	 	and the second term is less than $2\mu^j_{\<h\>}(G)^{1/2}\cE^j(\varphi_n)^{1/2}$.
 	 	Hence noting $h\psi\in\cF$, we have 
 	 	\begin{align*}
 	 	\nu_h(K)&\leq\int_E\varphi_nd\nu_h=-\cE(h,\varphi_n)
 	 	=-\left(\cE^c(h,\varphi_n)+\cE^j(h,\varphi_n)+\cE^k(h,\varphi_n)\right)\\
 	 	&\leq\cE^c(h\psi)^{1/2}\cE^c(\varphi_n)^{1/2}
 	 	+\cE^j(h\psi)^{1/2}\cE^j(\varphi_n)^{1/2}+2\mu^j_{\<h\>}(G)^{1/2}\cE^j(\varphi_n)^{1/2}\\
 	 	&\ \ \  +\cE^k(h\psi)^{1/2}\cE^k(\varphi_n)^{1/2}\\ 
 	 	&\le\left(\cE(h\psi)^{1/2}+2\mu^j_{\<h\>}(G)^{1/2}\right)
 	 	\cE(\varphi_n)^{1/2}\longrightarrow 0\ \ \text{as}\ n\to\infty.
 	 	\end{align*}
  Therefore the measure $\nu_h$ is smooth.
 	 \end{proof}	
 	
\br\la{r-s} \rm 
%Lemma \ref{nuh} can be extended as follows: 
Let $h\in\cS^{\rm sub}$ and 
$h^M=h-M$, $M:=\|h^+\|_\infty$. Noting that the constant function M 
belongs to $\cF^\dagger_{\loc}\cap L^\infty_{\loc}$ and that 
$$
\cE(h^M,\varphi)=\cE(h,\varphi)-M\cE(1,\varphi)=\cE(h,\varphi)-M\int_E\varphi dk\le 0,
$$
we see that $h^M\in\cS^{\rm sub}_-$. Since  
%Lemma \ref{nuh} leads
$$
\cE(h,\varphi)=-\int_E\varphi (d\nu_{h^M}-Mdk),
$$
$\nu_{h}$ equals $\nu_{h^M}-Mk$.  
$\nu_h$ looks signed, but it's actually positive.

%, where $k$ be the killing measure in the Beuring-Deny formula of $(\cE,\cF)$.
%Write . 
%Since $-\cE(h,\varphi)\ge 0$ for any $\cF_+\cap C_0(E)$, $\nu_{h}$ must be a positive Radon smooth measure and satisfies (\ref{me-n}).
\er

 		Let $\{K_n\}$ be a sequence of increasing compact sets such that 
 		$K_1\subset \mathring{K}_{2}\subset\cdots\subset K_n\subset \mathring{K}_{n+1}\subset\cdots$ and 
$K_n\uparrow E$, where $\mathring{K}$ is the interior of $K$. Denote by $\tau_n$ the first exit time from $K_n$, 
\bequ\la{exit}
\tau_n=\inf\{t>0\mid X_t\not\in K_n\}.
\eequ	
 	The next lemma is an extension of \cite[Lemma 3.18, Lemma 3.19]{T4}, 
 	where a function in $\cF_e$ is treated.

 	\bl\la{invariant-0}
 	For $h\in\cS^{\rm sub}$ there exists a sequence $\{\sigma_n\}$ of stopping times such that  
 		$\sigma_n<\zeta$, $\sigma_n\uparrow\zeta$ and 
 		\begin{align}\la{s}
 			h(x)\le\bfE^{\rm res}_x\left(e^{-A^k_{t\wedge \sigma_n}}h(X_{t\wedge \sigma_n})\right) 
 			\ \text{\rm q.e.}\ x .
 		\end{align} 	 
%that is, if $h$ is a non-positive subsolution, then $p_th(x)\ge h(x)$.
 	\el
 	\begin{proof}
 	  		 Let $(\cE^{\rm res},\cF^{\rm res})$ 
  		be the {\it resurrected Dirichlet form} defined in \cite[(5.2.25)]{CF}, which is 
 		a regular Dirichlet form on $L^2(E;m)$. We see from \cite[Theorem 5.2.17]{CF} 
 		that $\cF^{\rm res}\supset\cF$ and 
 		$\cF^{\rm res}\cap L^2(E;k)=\cF$. 
 		As a result, we know that  $\cF^{\rm res}\cap C_0(E)=\cF\cap C_0(E)$, $({\cF^
 		{\rm res}}^{\dagger})_{\loc}\supset\cF^\dagger_{\loc}$ and that	 for $h\in{\cS}^{\rm{sub}}$
 		%Note that since $\cD(\cE^{\rm res})\supset\cD(\cE)$, $\cD^\dagger_{\loc}(\cE^{\rm res})\supset\cD^\dagger_{\loc}(\cE)$.
 		% 	Note that there exists a generalized compact nest $\{F_k\}$ associated with $\eta$
 		% 	because $\eta$ is a smooth Radon measure (\cite[p. 83]{FOT}). 
 		$$
 		\cE^{\rm res}(h,\varphi )=-\int_X\varphi (hdk+d\nu_h), \ \ \varphi \in \cF^{\rm res}\cap C_0(E).
 		$$
 		%Here,  	$b\cD(\cE^{{\rm res},F_n})=\{u\in b\cD(\cE^{\rm res}): u=0\ \text{q.e. on}\ X\setminus F_n\}$. 
 %		Let $k$ be the killing measure of $(\cE,\cF)$.
 		Since $k$ is a smooth Radon measure, so is $hk$.

 		Let $X^{\rm res}=
 		(\bfP_x^{\rm res},X_t,\zeta)$ be the {\it resurrected process} of $X$, that is, 
 		the Hunt process generated by $(\cE^{\rm res},\cF^{\rm res})$
 		(cf. \cite[Theorem 5.2.17]{CF}). We then see from \cite[Corollary 3.3]{Mi} that 
 		$$
 		h(X_t)=h(X_0)+M^{[h]}_t+\int_0^th(X_s)dA^{k}_s+A^{\nu_h}_t, \ \ t<\zeta,\ \ 
 		\bfP^{\rm res}_x\text{-a.s. q.e.}\ x ,
 		$$
 		where $M^{[h]}_t\in \cM_{\loc}^{[[0,\zeta[[}$ is the martingale part of the 
 		Fukushima decomposition, i.e., there
 		exist a sequence $\{S_n\}$ of stopping times with $S_n\uparrow\zeta$ and 
 		a sequence $\{M^n_t\}$ of square integrable martingale AFs  such that 
 \bequ\la{ku-fu}
 		M^{[h]}_{t\wedge S_n}1_{\{t\wedge S_n<\zeta\}}=M^{n}_{t\wedge S_n}1_{\{t\wedge S_n<\zeta\}}
 	\eequ	
 		(\cite[Theorem 4.2]{Ku}). 

 % 		 Define $T_n=\inf\{t>0\mid\, A^{k}_t>n\}$($n=1,2,\ldots$).
 		Note that $X_{\zeta}=X_{\zeta-}(=\infty)$, $\bfP^{\rm res}_x$-a.s. on $\{\zeta<\infty\}$
 		% 		$\bfP^{\rm res}_x(X_{\zeta-}=X_\zeta(=\infty), \zeta<\infty)=1$
 		because $(\cE^{\rm res},\cF^{\rm res})$ has no killing
 		part (\cite[Theorem 5.3.1]{FOT}). Consequently, 
 		%$\zeta$ is a predictable stopping time, and  		
 		$\tau_n<\zeta$,  $\tau_n\uparrow\zeta$ on $\{\zeta<\infty\}$.
 		Define $\sigma_n=S_n\wedge \tau_n$. Then 
 		$\sigma_n<\zeta$ and $\sigma_n\uparrow\zeta$. 
 		%Put $C_t=A^k_t-A^{\nu_\varepsilon}_t$. 
 		We see from It$\hat{\rm o}$'s formula that 
 		\begin{align}\la{both}
 			&e^{-A^k_{t\wedge \sigma_n}}h(X_{t\wedge \sigma_n})
 			=h(X_0)
 			-\int_0^{t\wedge \sigma_n}e^{-A^k_s}h(X_s)dA^k_s
 			+\int_0^{t\wedge \sigma_n}e^{-A^k_s}dM_s^{[h]}\nonumber \\
 			&\qquad \ \ \ +\int_0^{t\wedge \sigma_n}e^{-A^k_s}h(X_s)dA^k_s
 			+\int_0^{t\wedge \sigma_n}e^{-A^k_s}dA^{\nu_h}_s\nonumber	\\
 			&\qquad \ \ =h(X_0)+\int_0^{t\wedge \sigma_n}e^{-A^k_s}dM_s^{[h]}
 			+\int_0^{t\wedge \sigma_n}e^{-A^k_s}dA^{\nu_h}_s
 			, \ \ \bfP^{\rm res}_x\text{-a.s. q.e. } x .
 		\end{align}
 		Noting that $\int_0^{t\wedge \sigma_n}e^{-A^k_s}dM_s^{[h]}$ is 
 		a square integrable martingale by (\ref{ku-fu}) and 
 		$\int_0^{t\wedge \sigma_n}e^{-A^k_s}dA^{\nu_h}_s\ge 0$,  
 	 		we have this lemma by taking expectation of both sides of (\ref{both}).
  \end{proof}

 	\bl\la{invariant}
 	It holds that
$$	
{\cS}_-^{\rm{sub}}\subset\widetilde{\cS}_-^{sub},\ \ \ \ {\cS}_+^{\rm{sup}}\subset\widetilde{\cS}_+^{sup}.
$$
%that is, if $h$ is a non-positive subsolution, then $p_th(x)\ge h(x)$.
 	\el
 	\begin{proof}
 	We will only prove that ${\cS}_-^{\rm{sub}}\subset\widetilde{\cS}_-^{\rm{sub}}$. 
%he proof of $\widetilde{\cS}_+^{\rm{sup}}\subset{\cS}_+^{\rm{sup}}$ is similar. 	

 		On account of $h\le0$, we see from Fatou's lemma that 
 	
 		\begin{align}\la{eta}
 			\bfE_x^{\rm res}\left(e^{-A^{k}_{t}}h(X_{t})\right)&
 			=\bfE_x^{\rm res}\left(e^{-A^{k}_{t}}
 			h(X_{t}); t<\zeta\right)\nonumber  \\
 			&=-\bfE_x^{\rm res}\left(\varliminf_{n\to\infty}\left(e^{-A^{k}_{t\wedge \sigma_n}}
 			(-h)(X_{t\wedge \sigma_n})\right); t<\zeta\right)\nonumber  \\
 			%&=-\bfE_x^{\rm res}\left(e^{-A^{k}_{t\wedge \zeta^{\rm res}}}
 			%(-h)(X_{t\wedge \zeta^{\rm res}})\right)\nonumber \\ 
 			&\ge-\bfE^{\rm res}_x\left(\varliminf_{n\to\infty}\left(e^{-A^k_{t\wedge \sigma_n}}(-h)(X_{t
 			\wedge \sigma_n})\right)\right)\\
 			&\ge-\varliminf_{n\to\infty}\left(-\bfE^{\rm res}_x\left(e^{-A^k_{t\wedge \sigma_n}}h(X_{t
 			\wedge \sigma_n})\right)\right)\ \ {\rm q.e.}\ x, \nonumber
 		\end{align}
 	and thus 
 $$		
 	h(x)\le\bfE_x^{\rm res}\left(e^{-A^{k}_{t}}h(X_{t})\right)=p_th(x) \ \ \text{q.e. } x.
$$ 
by (\ref{s}). 
 		Applying \cite[Theorem 5.2.17]{CF} again, 
 		we see that the left hand side of (\ref{eta}) is equal to 
 		$
 		\bfE_x\left(h(X_t)\right) =p_th(x).
 		$
%By the strong Feller property and $\lim_{n\to\infty}p_t(h\vee(-n))=p_th$, 
%the inequality $p_th(x)\ge h(x)$ holds for every $x\in E$.
 	\end{proof}	
 
%\br
%Let us explain the last part of the proof above in a little more detail. By the strong Feller property, 
%$p_th$ is upper semicontinuous because $p_t(h\vee(-n))\downarrow p_th$ as $n\to\infty$, and 
%so $\{x\in E\mid
%p_th(x)<h(x)\}$ is a quasi-open set. An quasi-open set of zero capacity is empty. In this way, we can 
%strengthen ``q.e.'' statements into ``everywhere'' statements and in the sequel use 
%this argument without mentioning it.
%\er 

By Lemma \ref{H-tildeH} and Lemma \ref{invariant}, we have 

   \begin{thm}\la{eq}
 It holds that
 $$
 {\cS}_-^{\rm{sub}}=\widetilde{\cS}_-^{\rm{sub}},\ \ \ \
 {\cS}_+^{\rm{sup}}=\widetilde{\cS}_+^{\rm{sup}}.
 $$ 
       \end{thm}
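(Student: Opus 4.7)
The plan is to assemble this theorem as an immediate consequence of the two preceding lemmas. Each of the two claimed equalities splits into a pair of inclusions: the inclusion $\cS_-^{\rm sub}\subset\widetilde{\cS}_-^{\rm sub}$ is precisely the content of Lemma \ref{invariant}, while the reverse inclusion $\widetilde{\cS}_-^{\rm sub}\subset\cS_-^{\rm sub}$ is precisely the content of Lemma \ref{H-tildeH}. The same pair of references handles the supersolution equality $\cS_+^{\rm sup}=\widetilde{\cS}_+^{\rm sup}$. Combining these four inclusions yields the two equalities in one line, so the write-up amounts to a formal assembly step.

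I do not expect any obstacle at this stage, since the substantive work has already been carried out in the two lemmas being invoked. Lemma \ref{invariant} rests on the stopping-time estimate (\ref{s}) from Lemma \ref{invariant-0}, which is obtained by applying Kuwae's generalized Fukushima decomposition to the resurrected process $X^{\rm res}$ and expanding $e^{-A^k_t}h(X_t)$ by It\^o's formula; Fatou's lemma (which is where the sign hypothesis $h\le 0$ is essential) together with the subprocess identification $\bfE^{\rm res}_x(e^{-A^k_t}f(X_t))=p_tf(x)$ from \cite[Theorem 5.2.17]{CF} then produces $p_th\ge h$ quasi-everywhere. Lemma \ref{H-tildeH}, in the opposite direction, truncates $h$ by the cutoffs $\psi_n$ supplied in Lemma \ref{G-tildeH}, applies the standard differentiation identity $\cE(h\psi_n,\varphi)=\lim_{t\da 0}\frac{1}{t}(h\psi_n-p_t(h\psi_n),\varphi)_m$ together with $p_t(h\psi_n)\ge p_th\ge h$, and finally passes $n\to\infty$ using Lemma \ref{G-tildeH}.

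In short, the proof is the single sentence: \emph{by Lemma \ref{invariant} and Lemma \ref{H-tildeH}, both inclusions in each equality hold}, so no further argument is required.
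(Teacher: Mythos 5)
Your proposal is correct and is exactly the paper's own proof: the theorem is stated as an immediate consequence of Lemma \ref{H-tildeH} (giving $\widetilde{\cS}_-^{\rm sub}\subset\cS_-^{\rm sub}$ and $\widetilde{\cS}_+^{\rm sup}\subset\cS_+^{\rm sup}$) and Lemma \ref{invariant} (giving the reverse inclusions). Your summary of how those two lemmas are established also matches the paper's arguments, so nothing further is needed.
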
		
% On account of Remark \ref{r-s}, the same argument above leads the next lemma.
  	
% 	\bl\la{su-prob}
%If $h\in\cS^{\rm sub}$ {\rm (resp.} $h\in\cS^{\rm sup}${\rm )}, then $p_th(x)\ge h(x)$  {\rm (resp.} 
%$p_th(x)\le h(x)$ {\rm )}.
% 	\el	
%Let $\{K_n\}$ be a sequence of increasing compact sets such that $K_1\subset \mathring{K}_{2}\subset\cdots\subset K_n\subset \mathring{K}_{n+1}\subset\cdots$ and 
%$K_n\uparrow E$. Denote by $\rho_n$ the first exit time from $K_n$, $\rho_n=\inf\{t>0\mid X_t\not\in K_n\}$.

\medskip
Let $\{\tau_n\}$ be a sequence of stopping times defined in (\ref{exit}). We define  
\begin{align}\la{p-set}
\Omega^p&=\cap_{n=1}^\infty\{\tau_n<\zeta<\infty\},
\\ 
\Omega^{i}&=(\Omega^{p})^c=\left(\cup_{n=1}^\infty\{\tau_n=\zeta<\infty\}\right)
\cup\{\zeta=\infty\}.
\end{align}
Let $\zeta^p$ (resp. $\zeta^i$) be the predictable (resp. totally inaccessible) part of $\zeta$, that is, 
\begin{equation}\la{dualp}
\zeta^p=\left\{
\begin{split}
& \zeta, \ \ \ \ \,\omega\in \Omega^p,  \\
& \infty,\ \ \ \omega\in \Omega^i,
\end{split}
\right.\qquad 
\zeta^i=\left\{
\begin{split}
& \zeta, \ \ \ \ \,\omega\in \Omega^i,  \\
& \infty,\ \ \ \omega\in \Omega^p.
\end{split}
\right.
\end{equation}
Then $\zeta$ is written as $\zeta=\zeta^p\wedge\zeta^i$. 
For the decomposition of the predictable part and the totally inaccessible part, refer \cite[Lemma (13.4)]
{RW}. Note that predictability is equivalent to accessibility in case when $X$ is a Hunt process (\cite[Theorem 15.1]{RW}).

\bl \la{k-shl}
%Suppose $\bfP^{\rm res}_x(\zeta<\infty)=1$. Then 
For $u\in C(E_\infty)$
\bequ\la{k-sh}
\bfE_x(u(X_{\zeta-});\zeta=\zeta^p<\infty)
=\bfE^{\rm res}_x\left(e^{-A^{k}_{\zeta}}u(X_{\zeta-});\zeta<\infty\right).
\eequ
Here $C(E_\infty)$ is the set of continuous functions on the one-point compactification $E_\infty$ 
of $E$.
\el
\begin{proof} Let $\{\tau_n\}$ be a sequence of stopping times in (\ref{exit}).
By \cite[(62.13)]{Sh}, 
\begin{align*}
&\ \ \ \ \bfE_x\left(u(X_{\tau_n})1_{\{\tau_n<\zeta<\infty\}}\right)\\
&=\bfE_x^{\rm res}\left(\int_0^{\tau_n}u(X_{\tau_n}(k_t))
1_{\{\tau_n(k_t)<\zeta(k_t)<\infty\}}\left(-de^{-A^k_t}\right)\right.\\
&\qquad \qquad \qquad \qquad \qquad \qquad \qquad \left. +e^{-A^k_{\tau_n}}u(X_{\tau_n})1_{\{\tau_n<\zeta<\infty\}}\right).
\end{align*}
Here $k_t,\ 0\le t\le\infty$ is the killing operators (For definition, see \cite[Definition (11.3)]{Sh}).
Since 
% for $t\le\sigma_n$, 
$$
\tau_n(k_t)<\zeta(k_t)\Longleftrightarrow\tau_n\wedge t<\zeta\wedge t\Longleftrightarrow\tau_n<t,
$$
by \cite[Proposition (11.11)]{Sh}, the right hand side equals
$$
\bfE_x^{\rm res}\left(e^{-A^k_{\tau_n}}u(X_{\tau_n})1_{\{\tau_n<\zeta<\infty\}}\right).
$$
Note that $\bfP^{\rm res}_x(\zeta<\infty)=1$ implies $\bfP_x(\zeta<\infty)=1$. Hence on account of  
 $\bfP^{\rm res}_x(\tau_n<\zeta<\infty)=1$,  
we have
$$
\bfE_x(u(X_{\tau_n});\tau_n<\zeta<\infty)
=\bfE^{\rm res}_x\left(e^{-A^{k}_{\tau_n}}u(X_{\tau_n});\zeta<\infty\right),
$$
which implies (\ref{k-sh}) by letting $n\to\infty$.
\end{proof}

\bc\la{masa}
{\ \rm (i)} $\bfP_x(\zeta=\zeta^p<\infty)=
\bfE^{\rm res}_x\left(e^{-A^k_{\zeta}};\zeta<\infty\right)$.\\
{\ \rm (ii)} $\bfP_x(\zeta=\zeta^i<\infty)=1-\bfE^{\rm res}_x\left(e^{-A^k_{\zeta}}\right)$.\\
{\rm (iii)} $\bfP_x(\zeta=\infty)=\bfE^{\rm res}_x\left(e^{-A^k_{\infty}};\zeta=\infty\right).$
\ec
\begin{proof}
The claim (i) follows from Lemma \ref{k-shl} by taking $u\equiv1$.
Since
\begin{align*}
\bfP_x(\zeta=\infty)&=\bfE^{\rm res}_x
\left(\int_0^\zeta 1_{\{\zeta(k_t)=\infty\}}\left(-de^{-A^k_{t}}\right)+
e^{-A^k_{\zeta}}1_{\{\zeta=\infty\}}\right)\\
&=\bfE^{\rm res}_x\left(e^{-A^k_{\infty}};\zeta=\infty\right)
\end{align*}
by $\zeta(k_t)=\zeta\wedge t$, 
the claim (iii) follows. Noting that 
\begin{align*}
\bfP_x(\zeta=\zeta^p<\infty)+\bfP_x(\zeta=\infty)&=\bfE^{\rm res}_x
\left(e^{-A^k_{\zeta}};\zeta<\infty\right)+\bfE^{\rm res}_x
\left(e^{-A^k_{\infty}};\zeta=\infty\right)\\
&=\bfE^{\rm res}_x\left(e^{-A^k_{\zeta}}\right),
\end{align*}
we have 
\begin{align*}
\bfP_x(\zeta=\zeta^i<\infty)&=1-\left(\bfP_x(\zeta=\zeta^p<\infty)
+\bfP_x(\zeta=\infty)\right)\\
&=1-\bfE^{\rm res}_x\left(e^{-A^k_{\zeta}}\right).
\end{align*}
\end{proof}
Since
\begin{align*}
&\quad \ \bfP_x(\zeta<\infty)+\bfP_x(\zeta=\infty)\\
&=\bfP_x(\Omega^p)+\bfP_x(\Omega^i\cap\{\zeta<\infty\})+
\bfP_x(\Omega^i\cap\{\zeta=\infty\})\\
&=\bfP_x(\zeta=\zeta^p<\infty)+\bfP_x(\zeta=\zeta^i<\infty)
+\bfP_x(\zeta=\infty),
\end{align*}
according to Corollary \ref{masa} 
\begin{align}\la{g-c}
\bfE^{\rm res}_x\left(e^{-A^k_{\zeta}};\zeta<\infty\right)
+\left(1-\bfE^{\rm res}_x\left(e^{-A^k_{\zeta}}\right)\right)+
\bfE^{\rm res}_x\left(e^{-A^k_{\infty}};\zeta=\infty\right)=1.
\end{align}

Since 
\begin{align*}
-\int_0^t1_E(X_s)e^{-A^k_s}dA^k_s&=\int_0^t1_E(X_s)d(e^{-A^k_s})\\
&=e^{-A^k_t}1_E(X_t)-1-\int_0^te^{-A^k_s}d(1_E(X_s))\\
&=e^{-A^k_t}1_E(X_t)-1+e^{-A^k_\zeta}1_{\{\zeta\le t\}},
\end{align*}
we have 
\begin{align}\la{kl}
1&=\bfE_x^{\rm res}\left(e^{-A^k_t}1_E(X_t)\right)+\bfE_x^{\rm res}\left(\int_0^t1_E(X_s)e^{-A^k_s}dA^k_s\right)+\bfE_x^{\rm res}\left(e^{-A^k_\zeta};\zeta\le t\right)\\
&=\bfE_x^{\rm res}\left(e^{-A^k_t};t<\zeta\right)+\bfE_x^{\rm res}\left(\int_0^{\zeta\wedge t}e^{-A^k_s}dA^k_s\right)+\bfE_x^{\rm res}\left(e^{-A^k_\zeta};\zeta\le t\right).\nonumber
\end{align}
Keller and Lenz \cite{KL} define the  {\it stochastic completeness at infinity} ((SCI) in 
abbreviation) of 
$(\cE,\cF)$ (or the corresponding symmetric 
Markov process $X$) by 
\begin{align}\la{kl1}
1&=\bfE_x^{\rm res}\left(e^{-A^k_t};t<\zeta\right)+\bfE_x^{\rm res}\left(\int_0^{\zeta\wedge t}e^{-A^k_s}dA^k_s\right)\nonumber \\
&=\bfE_x^{\rm res}\left(e^{-A^k_t}1_E(X_t)\right)+
\bfE_x^{\rm res}\left(\int_0^t1_E(X_s)e^{-A^k_s}dA^k_s\right), \ t>0.
\end{align}
Note that if $dk=Vdm$, then $A^k_t=\int_0^tV(X_s)ds$ and the equation (\ref{kl1}) is written as
$$
1=p_t1+\int_0^tp_sVds, \ t>0.
$$
By (\ref{kl}), the equality (\ref{kl1}) is equivalent to
$$
\bfE_x^{\rm res}\left(e^{-A^k_\zeta};\zeta\le t\right)=0, \ t>0,
$$
which is equivalent to
\bequ\la{kl-c1}
\bfE_x^{\rm res}\left(e^{-A^k_\zeta};\zeta<\infty\right)=0
\eequ 
because 
$$
\bfE_x^{\rm res}\left(e^{-A^k_\zeta};\zeta\le t\right)\uparrow 
\bfE_x^{\rm res}\left(e^{-A^k_\zeta};\zeta<\infty\right)\ \ \text{as}\ t\to\infty.
$$
Moreover, by the irreducibility of $X^{\rm res}$, it satisfies 
i) $\bfP^{\rm res}_x(\zeta<\infty)=0$ for all $x\in E$ or 
ii) $\bfP^{\rm res}_x(\zeta<\infty)>0$ for all $x\in E$. For the case i), 
(\ref{kl-c1}) holds and for the case ii),
 (\ref{kl-c1})is equivalent to
\bequ\la{kl-c2}
e^{-A^k_\zeta}=0\ \ \bfP^{\rm res}_x\text{-a.s. on}\ \{\zeta<\infty\}\ \Longleftrightarrow
\ \bfP^{\rm res}_x(A^k_\zeta=\infty\,|\,\zeta<\infty)=1.
\eequ 
Therefore, we see that (SCI) is equivalent to that i) $\bfP^{\rm res}_x(\zeta=\infty)=1$
 or $\bfP^{\rm res}_x(A^k_\zeta=\infty|\zeta<\infty)=1$ (\cite[Theorem 7.33]{KLW}). 
 Here we would like to emphasis that using the
concepts of predictable part and totally inaccessible part of the life time, we can 
make the image of (SCI) concrete probablistically, that is,   
\bequ\la{ngc}
({\rm SCI})\ \Longleftrightarrow\ \bfP_x(\zeta=\zeta^p<\infty)=0.
\eequ

 Masamune and Schmidt \cite{MS} call this property the {\it generalized conservation property} 
 ((GCP) for short) and
  extend Khasminskii’s criterion for the generalized conservation property
  and prove the equivalence
with the ordinary conservation property of time changed processes.
We see that if $\bfP^{\rm res}_x(\zeta=\infty)=1$, that is, $X^{\rm res}$ is conservative, 
then $X$ has (SCI). In particular, if $X$ is conservative, $\bfP_x(\zeta=\infty)=1$, 
then the killing part disappear and $X^{\rm res}$ is identified with $X$, consequently, $X$ has the SCI.

As stated in (\ref{kl-c2}), if $\bfP_x(\zeta<\infty)=1$, then (SCI) is equivalent to 
\begin{align}\la{eqi}
\bfP_x(\zeta=\zeta^i<\infty)=1&\ \Longleftrightarrow\ \bfE^{\rm res}_x\left(e^{-A^k_{\zeta}}\right)=0\\
&\ \Longleftrightarrow\ \bfP^{\rm res}_x\left({A^k_{\zeta}}=\infty\right)=1.\nonumber
\end{align}
%More generally, if $\bfP^{\rm res}_x(A^k_\zeta=\infty)=1$, then $X$ 
%has the SCI. 
Let $X^{{\rm res},k}$ be the time changed process of $X^{\rm res}$ 
by $A^k_t$. Note that $A^k_\zeta$ is nothing but the life time of $X^{{\rm res},k}$ (\cite[(65.2)]{Sh}). 
 We then see that if $X^{{\rm res},k}$ is conservative, $\bfP^{\rm res,k}_x(\zeta=\infty)=1$,
 then $\bfP^{\rm res}_x(A^k_\zeta=\infty)=1$. On account of the equivalence above, we define 
 a concept on stochastic incompleteness, {\it explosion by killing} (({EK}) for short) by
\bequ
 ({\rm EK})\qquad \ \ \bfP_x(\zeta=\zeta^i<\infty)=1.\qquad \ \ 
\eequ
(EK)  implies that the Hunt process $X$ explodes by jumping from inside of $E$ to the cemetery 
point $\Delta$ almost surely.
We then have
  
 \bt\la{ma-sh}
The following statements are equivalent:
 
 \smallskip
 \noindent
 {\ \ \rm (i)}  $\bfP_x(\zeta=\infty)=0$ and 
 %$\bfP_x(\zeta=\zeta^p)=0$
  {\rm(SCI)}.\\ 
{\  \rm (ii)}   {\rm(EK)}\\
% $\bfP_x(\zeta=\zeta^i<\infty)=1$.\\ 
{\rm (iii)} $\bfE^{\rm res}_x\left(e^{A^k_\zeta}\right)=0$.\\ 
{\rm (iv)} $\bfP^{\rm res}_x(A^k_\zeta=\infty)=1$.\\ 
{\ \rm (v)} The time changed process of $X^{\rm res}$ by $A^k_t$ is conservative.
 \et

\br
Since $X$ is  the subprocess of $X^{\rm res}$ by $\exp(-A^k_t)$,  
\begin{align*}
&\bfP_x(\tau_n<\zeta\le t)\\
&=\bfE_x^{\rm res}\left(\int_0^{\zeta}
1_{\{\tau_n(k_s)<\zeta(k_s)\le t\}}\left(-de^{-A^k_s}\right)\right)
+\bfE_x^{\rm res}\left(e^{-A^k_{\zeta}};\zeta\le t\right)\\
&=\bfE_x^{\rm res}\left(\int_{\tau_n}^{\zeta}
\left(-de^{-A^k_s}\right)\right)+\bfE_x^{\rm res}
\left(e^{-A^k_{\zeta}};\zeta\le t\right).
\end{align*}
The left and right hand side tend to
$\bfP_x(\zeta=\zeta^p\le t)$ and 
$\bfE^{\rm res}_x\left(e^{-A^k_{\zeta}};\zeta\le t\right)$ respectively 
  by letting $n\to\infty$, and thus 
\bequ\la{p-t}
\bfP_x(\zeta=\zeta^p\le t)=\bfE^{\rm res}_x\left(e^{-A^k_{\zeta}};\zeta\le t\right).
\eequ
Since
\begin{align*}
\bfP_x(\zeta=\zeta^i\le t)&=1-\bfP_x(\zeta>t)-\bfP_x(\zeta=\zeta^p\le t)\\
&=1-\bfE_x^{\rm res}\left(e^{-A^k_{t}};\zeta>t\right)-\bfE^{\rm res}_x\left(e^{-A^k_{\zeta}};\zeta\le t\right),
\end{align*}
we have 
\bequ\la{i-t}
\bfP_x(\zeta=\zeta^i\le t)=1-\bfE^{\rm res}_x
\left(e^{-A^k_{\zeta\wedge t}}\right).
\eequ
Corollary \ref{masa} is regarded as identities obtained by limiting 
equations (\ref{p-t}) and  (\ref{i-t}) as $t\to\infty$.

%\smallskip
%\noindent
%{\ \,\rm (i)} $\bfP_x(\zeta=\zeta^p\le t)
%=\bfE^{\rm res}_x\left(e^{-A^k_{\zeta}};\zeta\le t\right)$.\\
%{\ \rm (ii)} $\bfP_x(\zeta=\zeta^i\le t)=1-\bfE^{\rm res}_x
%\left(e^{-A^k_{\zeta\wedge t}}\right)$.\\
%{\rm (iii)} $\bfP_x(\zeta>t)=\bfE^{\rm res}_x\left(e^{-A^k_{t}};\zeta>t\right).$
\er

\br\la{ess-re}
For $\lambda>0$, denote by (SCI$_{\lambda})$(resp. (EK$_{\lambda}$)) the stochastic completeness at infinity 
(resp. explosion by killing) of 
$(\cE_\lambda(=\cE+\lambda(\ ,\ )),\cF)$. Then we see that 
\begin{align*}
\text{(SCI)}\ \bfE^{\rm res}_x\left(e^{-A^k_{\zeta}};\zeta<\infty\right)=0&\Longleftrightarrow \ \text{(SCI$_{\lambda}$)}
\  \bfE^{\rm res}_x\left(e^{-A^k_{\zeta}-\lambda\zeta};\zeta<\infty\right)=0\\
&\Longleftrightarrow  \ \text{(EK$_{\lambda}$)}\,\ \bfE^{\rm res}_x\left(e^{-A^k_{\zeta}-\lambda\zeta}\right)=0.
\end{align*}
Hence, (SCI) is equivalent to (SCI$_{\lambda}$) and (EK$_{\lambda}$) for any $\lambda>0$.
\er

\bl\la{f-eq}
For $h\in{\cS}^{\rm sub}$
\begin{align*}\la{fu-rq}
&h(x)\le \bfE^{\rm res}_x\left(e^{-A^{k}_{t}}h(X_{t});t<\zeta\right)\\
&+\varlimsup_{n\to\infty}\bfE^{\rm res}_x\left(e^{-A^{k}_{\sigma_n}}
h^+(X_{\sigma_n});t\ge\zeta\right)-\varliminf_{n\to\infty}\bfE^{\rm res}_x\left(e^{-A^{k}_{\sigma_n}}
h^-(X_{\sigma_n});t\ge\zeta\right) {\rm q.e.}
\end{align*}
\el
\begin{proof}
On account of (\ref{s}) we have for $h\in{\cS}^{\rm sub}$
$$
 		h(x)\le\bfE^{\rm res}_x\left(e^{-A^k_{t\wedge \sigma_n}}h(X_{t\wedge \sigma_n})\right)\ \ {\rm q.e.}\ 
$$ 	
Since 
\begin{align*}
&\varlimsup_{n\to\infty}\bfE^{\rm res}_x\left(e^{-A^{k}_{t\wedge \sigma_n}}
h(X_{t\wedge \sigma_n})\right)\nonumber \\
\le&\,\varlimsup_{n\to\infty}\left(\bfE^{\rm res}_x\left(e^{-A^{k}_{t\wedge \sigma_n}}
h^+(X_{t\wedge \sigma_n});t<\zeta\right)
+\bfE^{\rm res}_x\left(e^{-A^{k}_{\sigma_n}}h^+(X_{\sigma_n});t\ge\zeta\right)\right)\nonumber\\
&-\varliminf_{n\to\infty}\left(\bfE^{\rm res}_x\left(e^{-A^{k}_{t\wedge \sigma_n}}
h^-(X_{t\wedge \sigma_n});t<\zeta\right)
+\bfE^{\rm res}_x\left(e^{-A^{k}_{\sigma_n}}h^-(X_{\sigma_n});t\ge\zeta\right)\right)\nonumber\\
\le&\ \bfE^{\rm res}_x\left(e^{-A^{k}_{t}}h(X_{t});t<\zeta\right)
+\varlimsup_{n\to\infty}\bfE^{\rm res}_x\left(e^{-A^{k}_{\sigma_n}}
h^+(X_{\sigma_n});t\ge\zeta\right)\\
&\ \ \ -\varliminf_{n\to\infty}\bfE^{\rm res}_x\left(e^{-A^{k}_{\sigma_n}}
h^-(X_{\sigma_n});t\ge\zeta\right)
\end{align*}
the proof is completed.
\end{proof}

\bc\la{f-eq1}
Let $h\in{\cS}^{\rm sub}\cap C(E_\infty)$. Then 
\bequ\la{fu-rq}
h(x)\le \bfE^{\rm res}_x\left(e^{-A^{k}_{t}}h(X_{t});t<\zeta\right)
+h(\infty)\bfE^{\rm res}_x\left(e^{-A^{k}_{\zeta}};t\ge\zeta\right)\ \ {\rm q.e.}
\eequ
\ec
%\begin{proof}
%On account of (\ref{s}) we have for $h\in{\cS}^{\rm sub}$
%$$
% 		h(x)\le\bfE^{\rm res}_x\left(e^{-A^k_{t\wedge \sigma_n}}h(X_{t\wedge \sigma_n})\right)\ \ {\rm q.e.}\ x.
%$$ 	
%Since 
%\begin{align}
%&\varlimsup_{n\to\infty}\bfE^{\rm res}_x\left(e^{-A^{k}_{t\wedge \sigma_n}}
%h(X_{t\wedge \sigma_n})\right)\nonumber \\
%=&\,\lim_{n\to\infty}\left(\bfE^{\rm res}_x\left(e^{-A^{k}_{t\wedge \sigma_n}}
%h^+(X_{t\wedge \sigma_n});t<\zeta\right)
%+\bfE^{\rm res}_x\left(e^{-A^{k}_{\sigma_n}}h^+(X_{\sigma_n});t\ge\zeta\right)\right)\nonumber\\
%-&\varliminf_{n\to\infty}\left(\bfE^{\rm res}_x\left(e^{-A^{k}_{t\wedge \sigma_n}}
%h^-(X_{t\wedge \sigma_n});t<\zeta\right)
%+\bfE^{\rm res}_x\left(e^{-A^{k}_{\sigma_n}}h^-(X_{\sigma_n});t\ge\zeta\right)\right)\nonumber\\
%\le&\,\left(\bfE^{\rm res}_x\left(e^{-A^{k}_{t}}h^+(X_{t});t<\zeta\right)
%+\bfE^{\rm res}_x\left(e^{-A^{k}_{\zeta}}h^+(X_{\zeta-});t\ge\zeta\right)\right)\nonumber\\
%&-\left(\bfE^{\rm res}_x\left(e^{-A^{k}_{t}}h^-(X_{t});t<\zeta\right)
%+\bfE^{\rm res}_x\left(e^{-A^{k}_{\zeta}}h^-(X_{\zeta-});t\ge\zeta\right)\right)\nonumber\\
%=&\ \bfE^{\rm res}_x\left(e^{-A^{k}_{t}}h(X_{t});t<\zeta\right)
%+h(\infty)\bfE^{\rm res}_x\left(e^{-A^{k}_{\zeta}};t\ge\zeta\right),\nonumber
%\end{align}
%the proof is completed.
%\end{proof}

\bl\la{con-res}
If 
%$\bfP_x(\zeta=\zeta^p<\infty)=0$,
$\bfP^{\rm res}_x(\zeta=\infty)=1$, 
then {for} $h\in{\cS}^{\rm sub}$
$$
h(x)\le p_th(x)\ \ {\rm q.e.}
$$
In particular, ${\cS}^{\rm sub}\subset\widetilde{\cS}^{\rm sub}$.
\el
\begin{proof}
If $\bfP^{\rm res}_x(\zeta=\infty)=1$, the expectations on $\{t\ge\zeta\}$ in 
the proof of Lemma \ref{f-eq} disappear. Hence
$$
h(x)\le\bfE^{\rm res}_x\left(e^{-A^{k}_{t}}h(X_{t})\right)=p_th(x)\ \ {\rm q.e.}
$$
%Hence we have this lemma by (\ref{f-sub}).
\end{proof}

\bp\la{ex-res}
Let $h\in{\cS}^{\rm sub}\cap C(E_\infty)$. If $\bfP_x(\zeta<\infty)=1$, then
\bequ\la{ex-1}
h(x)\le h(\infty)\bfE^{\rm res}_x\left(e^{-A^{k}_{\zeta}};\zeta<\infty\right)
(=h(\infty)\bfP_x(\zeta=\zeta^p<\infty))\ \ {\rm q.e.}
\eequ
\ep
\begin{proof}
%If $\bfP_x(\zeta<\infty)=1$, then
%$$
%\varlimsup_{t\to\infty}\bfE^{\rm res}_x\left(e^{-A^{k}_{t}}h(X_{t});t<\zeta\right)\le
%\|h\|_\infty\varlimsup_{t\to\infty}\bfP_x\left(t<\zeta\right)=0,
%$$
%and thus
%$$
%h(x)\le \bfE^{\rm res}_x\left(e^{-A^{k}_{\zeta}}h(X_{\zeta-})\right).
%$$
%If $\bfP^{\rm res}_x(\zeta=\infty)=1$, the right hand side equal that of (\ref{tou-0})  and 
 The right hand side of (\ref{fu-rq}) tends to the right hand side of (\ref{ex-1})
 %$$
 %\bfE^{\rm res}_x\left(e^{-A^{k}_{\zeta}}h(X_{\zeta-});\zeta<\infty\right)
 %$$
 as $t\to\infty$. Indeed, 
\begin{align*}
\varlimsup_{t\to\infty}\bfE^{\rm res}_x\left(e^{-A^{k}_{t}}h(X_{t});t<\zeta\right)&\le
\|h^+\|_\infty\bfE^{\rm res}_x\left(e^{-A^{k}_{\infty}};\zeta=\infty\right)\\
&=\|h^+\|_\infty\bfP_x(\zeta=\infty)=0
\end{align*}
and 
$$
\lim_{t\to\infty}\bfE^{\rm res}_x\left(e^{-A^{k}_{\zeta}};t\ge\zeta\right)=\bfE^{\rm res}_x\left(e^{-A^{k}_{\zeta}};\zeta<\infty\right).
$$
%$\{\zeta=\zeta^p\}=\cap_{n}\{\sigma_n<\zeta\}$.
%In other words, the sequence $\{\sigma_n\}$ {\it announce} $\zeta^p$. 
%Then, the inequality above leads us to 
%\bequ\la{f-eq1}
%h(x)\le p_th(x)+\bfE_x(h(X_{\zeta-});\zeta^p=\zeta\le t).
%\eequ 
%where $\zeta^p$ is the predictable part of $\zeta$. 
%Hence, 
%$\{\zeta=\zeta^p\}=\cap_{n}\{\sigma_n<\zeta\}$.
%In other words, the sequence $\{\sigma_n\}$ {\it announce} $\zeta^p$. 
%Then, the inequality above leads us to 
%\bequ\la{f-eq1}
%h(x)\le p_th(x)+\bfE_x(h(X_{\zeta-});\zeta^p=\zeta\le t).
%Hence we have (i),(ii) by Corollary \ref{i-co}.
%where 
%\begin{equation}\la{zeta}
%h(X_{\zeta-})=
%\left\{
%\begin{split}
%& h(X_{\zeta-})\ \  \text{on}\ \{\zeta=\zeta^p\}=\cap_n\{\sigma_n<\zeta\},  \\
%& h(\Delta)=0\ \ \text{on}\ \{\zeta=\zeta^t\}=\{\zeta=\zeta^p\}^c
%=\cup_n\{\sigma_n=\zeta\}. 
%\end{split}
%\right.
%\end{equation}
%Note that $h(X_{\zeta^p})=h(X_{\zeta^p-})$ by $h\in C(E_\infty)$ and 
%the quasi-left continuity of $X$, $X_{\zeta^p}=X_{\zeta^p-}$ $\bfP_x$-a.s.
\end{proof}

\begin{prop}\la{p-l}
 If $\bfP_x(\zeta=\zeta^i<\infty)=1$, then every $h\in{\cS}^{\rm sub}$ is non-positive for 
  {\rm q.e.}\ x.
\end{prop}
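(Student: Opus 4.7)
The hypothesis (EK) is, by Corollary \ref{masa}(ii)--(iii), equivalent to the conjunction $\bfE^{\rm res}_x(e^{-A^k_\zeta})=0$ and $\bfP_x(\zeta=\infty)=0$. Since we do not assume $h\in C(E_\infty)$, Proposition \ref{ex-res} does not directly apply, and my plan is to argue instead from the three-term upper bound of Lemma \ref{f-eq}.

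Concretely, I would fix $t>0$, set $M:=\|h^+\|_\infty<\infty$ (finite by the definition of $\cS^{\rm sub}$), and apply Lemma \ref{f-eq}. The term $-\varliminf_n\bfE^{\rm res}_x(e^{-A^k_{\sigma_n}}h^-(X_{\sigma_n});t\ge\zeta)$ is the negative of a non-negative quantity, so discarding it only weakens the bound. In the two surviving terms, the crude estimates $h(X_t)\le h^+(X_t)\le M$ and $h^+(X_{\sigma_n})\le M$, combined with dominated convergence along $\sigma_n\uparrow\zeta$ (so that $e^{-A^k_{\sigma_n}}\downarrow e^{-A^k_\zeta}$), yield
$$h(x)\le M\,\bfE^{\rm res}_x\left(e^{-A^k_t};t<\zeta\right)+M\,\bfE^{\rm res}_x\left(e^{-A^k_\zeta};t\ge\zeta\right)\qquad\text{q.e.\ }x.$$

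Letting $t\to\infty$ and invoking dominated convergence once more (each integrand is bounded by $1$), the first summand tends to $M\,\bfE^{\rm res}_x(e^{-A^k_\infty};\zeta=\infty)=M\,\bfP_x(\zeta=\infty)$ and the second to $M\,\bfE^{\rm res}_x(e^{-A^k_\zeta};\zeta<\infty)$; both vanish under (EK) by Corollary \ref{masa}. Hence $h(x)\le 0$ q.e., as desired. No serious obstacle is anticipated: the whole argument is powered by the observation that (EK) simultaneously kills the ``escape to infinity'' behaviour and the ``jump to the cemetery'' behaviour of $X$, each of which supplies one of the two boundary terms produced by Lemma \ref{f-eq}.
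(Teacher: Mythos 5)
Your argument is correct and is essentially the paper's own proof: both apply Lemma \ref{f-eq}, discard the non-positive $h^-$ term, bound the remaining integrands by $\|h^+\|_\infty$, let $t\to\infty$, and identify the two limiting terms via Corollary \ref{masa} as $\|h^+\|_\infty\bigl(\bfP_x(\zeta=\infty)+\bfP_x(\zeta=\zeta^p<\infty)\bigr)$, which vanishes under (EK). No substantive difference in route or technique.
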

\begin{proof}
We see from Lemma \ref{f-eq} that 
\begin{align*}
&h(x)\le
\|h^+\|_\infty\left(\bfE^{\rm res}_x\left(e^{-A^{k}_{t}};t<\zeta\right)
+\bfE^{\rm res}_x\left(e^{-A^{k}_{\zeta}};t\ge\zeta\right)\right)\\ 
&\longrightarrow\ \|h^+\|_\infty\left(\bfE^{\rm res}_x\left(e^{-A^{k}_{\infty}};\zeta=\infty\right)
+\bfE^{\rm res}_x\left(e^{-A^{k}_{\zeta}};\zeta<\infty\right)\right),\ \ t\to\infty.
\end{align*}
The equation in parentheses equals 0 by Corollary \ref{masa} and the assumption in this proposition.
\end{proof}

Put
\bequ\la{solu}
\cS={\cS}^{\rm sub}\cap{\cS}^{\rm sup}.
\eequ
Applying Proposition \ref{p-l}, we have a next Liouville property:

\bt\la{Li}
If $\bfP_x(\zeta=\zeta^i<\infty)=1$, then every function
 in ${\cS}$ is zero {\rm q.e.} $x$.
\et

\bl\la{g-ex}
The function $\bfE^{\rm res}_x(e^{-A^k_\zeta};\zeta<\infty)$, $\bfE^{\rm res}_x(e^{-A^k_\infty};\zeta=\infty)$
 and $\bfE^{\rm res}_x(e^{-A^k_\zeta})$ belong to $\cF_{\loc}^\dagger$.
\el
\begin{proof}
Put $g(x)=\bfE^{\rm res}_x(e^{-A^k_\zeta};\zeta<\infty)$. By the Markov property
\begin{align*}
p_tg(x)&=\bfE^{\rm res}_x\left(e^{-A^{k}_{t}-A^{k}_{\zeta(\theta_t)}(\theta_t)}
;t<\zeta, \zeta(\theta_t)<\infty\right)\\
&=\bfE^{\rm res}_x\left(e^{-A^{k}_{\zeta}};t<\zeta<\infty\right)\le g,
\end{align*}
that is, $g$ is $p_t$-excessive. Let $\varphi$ be a strictly positive function in $L^2(E;m)\cap C(E)$ and 
 define $g_n=g\wedge nR_1\varphi$. Then $g_n\in L^2(E;m)$, 
 $g_n\le nR_1\varphi(\in\cF)$ and $e^{-t}p_tg_n\le g_n$, $t\ge 0$, and thus  
 $g_n\in\cF$ by \cite[Lemma 2.3.2]{FOT}. Since $O_n:=\{R_1\varphi>1/n\}$ are open sets
 satisfying $O_n\uparrow E$ as 
  $n\to\infty$ and $g=g_n$ on $O_n$, $g$ belongs to $\cF_{\loc}$. On account of the boundedness 
  of $g$, $g\in\cF_{\loc}^\dagger$.
  
  By the same argument above, we can show that the other functions belong to $\cF_{\loc}^\dagger$.
\end{proof}

\bl\la{g-ex1}
The functions $\bfE^{\rm res}_x(e^{-A^k_\zeta};\zeta<\infty)$, 
$\bfE^{\rm res}_x(e^{-A^k_\infty};\zeta=\infty)$ and $\bfE^{\rm res}_x(e^{-A^k_\zeta})$ belong to ${\cS}$.
\el
\begin{proof}
Put $g(x)=\bfE^{\rm res}_x(e^{-A^k_\zeta};\zeta<\infty)$. Let $\varphi\in\cF_+\cap C_0(E)$ and 
 $\{\psi_n\}\subset\cF_+\cap C_0(E)$ the sequence defined in Lemma \ref{G-tildeH}. Since
\begin{align*}
p_t(g\psi_n)(x)&=\bfE^{\rm res}_x\left(e^{-A^k_t}\bfE^{\rm res}_{X_t}
\left(e^{-A^k_{\zeta}}1_{\{\zeta<\infty\}}\right)\psi_n(X_t)\right)\\
&=\bfE^{\rm res}_x\left(e^{-{A^k_t-A^k_{\zeta(\theta_t)}(\theta_t)}}1_{\{t<\zeta,\zeta(\theta_t)<\infty\}}
\psi_n(X_t)\right)\\
&=\bfE^{\rm res}_x\left(e^{-{A^k_\zeta}}1_{\{\zeta<\infty\}}\psi_n(X_t)\right),
\end{align*}
\begin{align*}
(g\psi_n-p_t(g\psi_n),\varphi)_m&=(g-p_t(g\psi_n),\varphi)_m\\
&=\left(\bfE^{\rm res}_x\left(e^{-A^k_\zeta}1_{\{\zeta<\infty\}}(1-\psi_n(X_t))\right),\varphi\right)_m.
\end{align*}
Hence
\begin{align*}
0\le(g\psi_n-p_t(g\psi_n),\varphi)_m&\le\left(\bfE^{\rm res}_x\left(1-\psi_n(X_t)\right),\varphi\right)_m\\
&\le(1-p^{\rm res}_t\psi_n,\varphi)_m\\
&=(\psi_n-p^{\rm res}_t\psi_n,\varphi)_m,
\end{align*}
and thus $0\le\cE(g\psi_n,\varphi)\le\cE^{\rm res}(\psi_n,\varphi)$. Noting that $1\in\cF_{\loc}^\dagger$, 
we see from Lemma \ref{G-tildeH} that 
$$
\lim_{n\to\infty}\cE^{\rm res}(\psi_n,\varphi)=\cE^{\rm res}(1,\varphi)=0
$$
and so $\lim_{n\to\infty}\cE(g\psi_n,\varphi)=0$. Using Lemma \ref{G-tildeH} again, we have 
$\cE(g,\varphi)=0.$

 By the same argument above, we can show that the other functions also belong to ${\cS}$.
\end{proof}

Theorem \ref{Li} and Lemma \ref{g-ex1} lead to 

\bc\la{e-alpha}
The next statements are equivalent{\rm :}

\medskip
{\rm \ (i)} $\bfP_x(\zeta=\zeta^i<\infty)=1$.

{\rm (ii)} If $h\in{\cS}$, then $h=0$, {\rm q.e.}
\ec

\begin{proof}
The implication from (i) to (ii) follows from Theorem \ref{Li}. On account of Lemma \ref{g-ex1}, It follow from (ii) that $\bfP_x(\zeta=\zeta^i<\infty)=1$ q.e. 
Noting that $g(x):=\bfE_x(e^{-A^k_\zeta})$ is an excessive function, $p_tg(x)\le g(x)$ and $p_tg(x)\uparrow g(x)$,  we see that $g$ is finely continuous and conclude that $g(x)=0$ for 
all $x\in E$ and so $\bfP_x(\zeta=\zeta^i<\infty)=1-g(x)=1$ for all $x\in E$.
\end{proof}

\br\la{ess-re1}
For $\lambda>0$, denote by $X^{(\lambda)}=(\bfP_x^{(\lambda)},X_t.\zeta)$ be the $\lambda$-killing process of $X$,
that is the Hunt process generated by $(\cE_\lambda,\cF)$. By Remark \ref{ess-re}, 
  (SCI) of $X$ is equivalent to (SCI$_\lambda$), $\bfP^{(\lambda)}_x(\zeta=\zeta^i<\infty)=1$. 
 Hence we see from Corollary \ref{e-alpha} that (SCI) of $X$ is equivalent to that if 
 $h\in{\cS}_\lambda(:={\cS}_\lambda^{\rm sub}\cap{\cS}_{\lambda}^{\rm sup})$, then $h=0$, {\rm q.e.}, where ${\cS}_\lambda^{\rm sub}$ and 
 ${\cS}_{\lambda}^{\rm sup}$ are the space of subsolutions and supersolutions associated with $(\cE_\lambda,\cF)$.
\er

\be
For a non-negative function $k$ in $L^1_{\loc}(\bR^d)$ define 
$$
\cE(u,v)=\frac{1}{2}\int_{\bR^d}(\nabla u,\nabla v)dx+\int_{\bR^d}u^2k(x)dx, \ \ u\in \bH^1(\bR^d)\cap C_0(\bR^d).
$$
Here $\bH^1(\bR^d)$ is 
the Sobolev space of order 1.  Let $\cF$ be the closure of the form above and 
denote by ${\bf P}_x$ the process generated by $(\cE,\cF)$. 
Then $\bfP^{\rm res}_x$ is
the Brownian motion $({\bf P}^B_x,X_t)$ on $\bR^d$ and  ${\bf P}_x$ is the killed process of ${\bf P}^B_x$ 
by the multiplicative functional $\exp({-\int_0^tk(X_s)ds})$. Since the Brownian motion is conservative,
${\bf P}_x$ satisfies (SCI), and ${\bf P}_x$ satisfies the (EK) if and only if 
$$
 {\bf P}^B_x\left(\int_0^\infty k(X_t)dt=\infty\right)=1,\ \ x\in\bR^d.
$$
Let $h$ be a bounded, twice continuously 
differentiable function on $\bR^d$ satisfying $-(1/2)\Delta h(x)+k(x) h(x)=0$, in particular, 
 $-(1/2)\Delta h(x)+\lambda h(x)=0\ (\lambda>0)$, then  $h(x)\equiv 0$ by Corollary \ref{e-alpha}. 

If 
$$
 {\bf P}^B_x\left(\int_0^\infty k(X_t)dt<\infty\right)>0,\ \ x\in\bR^d,
$$
then $P_x(\zeta=\zeta^i<\infty)>0$ and $P_x(\zeta=\zeta^i<\infty)+P_x(\zeta=\infty)=1$.

%More generally, $X^{\rm res}$ is conservative, $\bfP^{\rm res}_x(\zeta=\infty)=1$, $
\ee

 \be \la{bog}
Let $\bar{E}$ be a compactification of $E$. Suppose that a subsolution $h$ is continuous on
 $\bar{E}$ and
 $\sup_{x\in E}h(x)(=\max_{x\in \bar{E}}h(x))\ge 0$. If $\bfP^{\rm res}_x(\tau_E<\infty)=1$, 
 the argument similar to that of Lemma \ref{k-shl} and Proposition \ref{ex-res} leads that 
 $$
 h(x)\le\bfE_x\left(h(X_{\tau_E-});\tau_E=\tau_E^p<\infty\right),
$$ 
where $\tau_E$ is the first exit time from $E$; $\tau_E=\inf\{t>0\mid X_t\not\in E\}$.
Since $h(X_{\tau_E-})\in\partial E:=\bar{E}\setminus E$ on $\{\tau_E=\tau_E^p<\infty\}$, 
the maximum of $h$ on $\bar{E}$ 
attains at a point in 
$\partial E$:
$$
h(x)\le\sup_{x\in\partial E}h(x),\ \ x\in E.
$$

Consider the absorbing symmetric 
$\alpha$-stable process $X^D=(\bfP^D_x,X_t,\tau_D)$ on a bounded Lipschitz open set $D\subset\bR^d$ 
(cf. \cite{BBC}). Write for $(\cE^D,\cF^D)$ 
the Dirichlet form generated by $X^D$. Then its killing measure $k^D$ is
$$
k^D(dx)=k^D(x)dx,\ \ k^D(x)=C\left(\int_{\bR^d\setminus D}\frac{1}{|x-y|^{d+\alpha}}dy\right),
$$
where $C$ is a constant depending on $d$ and $\alpha$. It is shown in \cite[Theorem 1.1]{BBC} that 
$\bfP^{D,{\rm res}}_x(\tau_D<\infty)=1$ if and only if $\alpha>1$. Note that  in \cite{BBC} they
call the resurrected process $\bfP^{D,{\rm res}}_x$ a {\it cencored stable process} and prove that 
the two processes are identical (\cite[Theorem 2.1]{BBC}). Let 
$h$ be a function in $\cS^{\rm sub}$ associated with $(\cE^D,\cF^D)$. 
We see that for $\alpha\le 1$ the resurrected process is recurrent because of 
its conservativeness and the finiteness of the Lebesgue measure of $D$. More strongly, 
it is Harris recurrent by  Absolute Continuity Condition (AC) (\cite[Lemma 4.8.1]{FOT}). 
Hence, it follow from \cite[Chapter X, Proposition (3.11)]{RY} that for $\alpha\le 1$
 $$
 \bfP^{D,{\rm res}}_x\left(\int_0^\infty k^D(X_t)dt=\infty\right)=1,
 $$
and so $h(x)\le 0$.   
 If $\alpha>1$ and $h$ continuous on the closure $\bar{D}$ of $D$, then
 $$
 h(x)\le\bfE^{D,{\rm res}}_x\left(e^{-A^k_{\tau_D}}h(X_{\tau_D-})\right).
 $$ 

\ee
 	
 \section{strong Maximum principle} 
  Let $h$ be a nontrivial subsolution in $\cS_-^{\rm{sub}}$ and put $G=\{x\in E\mid h(x)<0\}$. Then 
  $G$ is a quasi-open set and so its capacity is positive, Cap$(G)>0$. Thus by the irreducibility ({I}) and 
  the absolute continuity  ({AC}) of $X$,
  \bequ\la{irrd}
  \bfP_x(\sigma_G<\zeta)>0\ \  \text{for all}\ x\in E,
  \eequ	
  where $\sigma_G=\inf\{t>0\mid X_t\in G\}$ (\cite[Theorem 4.7.1]{FOT}). As a result, 
  $$ 
  \bfP_x\left(\int_0^\zeta(h1_G)(X_t)dt<0\right)>0\ \ \text{for all}\ x\in E,
  $$
  and thus by Theorem \ref{eq} 
  \begin{align*}
  h(x)&\le\int_0^\infty e^{-t} h(x) dt\le \int_0^\infty e^{-t}p_t h(x) dt\le  \int_0^\infty e^{-t}p_t (h1_G)(x) dt\\
  &=\bfE_x\left(\int_0^\zeta (h1_G)(X_t)dt\right)<0\ \  \text{for all}\ x\in E.
  \end{align*}

   \begin{thm}\la{BP-1}
   	Suppose that $X$ satisfies $({{I}})$ and $({{SF}})$.
   	
   	\smallskip
   	\noindent
   	{\rm (i)} Let $h\in\cS^{\rm sub}\cap C(E)$ and $M=\sup_{x\in E}h(x)\ge 0$.
   	Then $h\equiv M$ or $h(x)<M$ for all $x\in E$.\\   	
   	{\rm (ii)} Let $h\in\cS^{\rm sup}\cap C(E)$ and $m=\inf_{x\in E}h(x)\le 0$.
   	Then $h\equiv m$ or $h(x)>m$ for all $x\in E$.
%  Moreover, suppose that $X$ is conservative, $p_t1=1$. 	
% If $h$ is in $S^{\rm sub}$ or $S^{\rm sup}$ such
%   	 $-\infty<m\le h(x)\le M<\infty$. Then $h\equiv m(\equiv M)$ or $m<h(x)<M$ for all $x\in E$.  	 
       \end{thm}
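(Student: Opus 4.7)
The plan is to reduce to the model argument displayed in the paragraph just preceding the theorem, by normalizing: for (i), consider $\widetilde h := h - M$; for (ii), pass to $-h$ and apply (i). The three ingredients I would assemble are Theorem \ref{eq}, Remark \ref{r-s} (which identifies the $\cE$-effect of subtracting a non-negative constant as purely a negative killing contribution), and the irreducibility-plus-(AC) consequence $\bfP_x(\sigma_G < \zeta) > 0$ for every $x\in E$ whenever $G$ is a non-empty open subset of $E$.

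For (i), I would first check $\widetilde h \in \cS_-^{\rm sub}$: the inequality $\widetilde h \le 0$ is immediate from the definition of $M$, and Remark \ref{r-s} gives
$$\cE(\widetilde h,\varphi) = \cE(h,\varphi) - M\int_E \varphi\,dk \le 0, \qquad \varphi\in\cF_+\cap C_0(E),$$
using $M\ge 0$, the positivity of $k$, and that $h$ is a subsolution. Theorem \ref{eq} then yields $p_t\widetilde h \ge \widetilde h$ quasi-everywhere; since $\widetilde h$ is continuous and (SF) ensures $p_t\widetilde h$ is continuous, this upgrades to an everywhere inequality. With $G := \{\widetilde h < 0\} = \{h < M\}$, which is open by continuity of $h$, and using $\widetilde h \equiv 0$ on $G^c$, I would then write
$$\widetilde h(x) = \int_0^\infty e^{-t}\widetilde h(x)\,dt \le \int_0^\infty e^{-t} p_t \widetilde h(x)\,dt = \bfE_x\!\left(\int_0^\zeta e^{-t}(\widetilde h\,1_G)(X_t)\,dt\right).$$
If $G\ne\emptyset$, then $\mathrm{Cap}(G)>0$, and \cite[Theorem 4.7.1]{FOT} gives $\bfP_x(\sigma_G<\zeta)>0$ for every $x\in E$, whence the right-hand side is strictly negative. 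Hence either $G=\emptyset$ (so $h\equiv M$) or $G=E$ (so $h(x)<M$ for every $x\in E$), which is exactly (i).

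Part (ii) follows by applying (i) to $-h$: the hypotheses translate to $-h \in \cS^{\rm sub}\cap C(E)$ with $\sup_E(-h) = -m \ge 0$, and the dichotomy of (i) for $-h$ rearranges to the stated dichotomy for $h$.

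The main obstacle I anticipate is the familiar quasi-everywhere-versus-everywhere gap: Theorem \ref{eq} only provides $p_t\widetilde h \ge \widetilde h$ quasi-everywhere, whereas the theorem demands a pointwise statement at every $x\in E$. This is resolved by the joint use of continuity of $h$ and the strong Feller property (SF), which makes both $\widetilde h$ and $p_t \widetilde h$ continuous so that a q.e. inequality becomes an everywhere inequality; the same pair of hypotheses is also what promotes the hitting statement $\bfP_x(\sigma_G<\zeta)>0$ from q.e. to every $x$. No new analytic estimate beyond those already assembled in Section 2 is required.
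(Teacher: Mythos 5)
Your proof is correct and is essentially the paper's own argument: normalize by subtracting $M$, verify $h-M\in\cS_-^{\rm sub}$ via $\cE(h-M,\varphi)=\cE(h,\varphi)-M\int_E\varphi\,dk\le 0$, invoke Theorem \ref{eq}, and run the irreducibility/(AC) resolvent computation on $G=\{h<M\}$, with (ii) obtained by passing to $-h$. Your explicit treatment of the q.e.-versus-everywhere upgrade via continuity and (SF) is a small point the paper leaves implicit, not a genuinely different route.
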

\begin{proof}
We will only prove (i). Since $\cE(h-M,\varphi)=\cE(h,\varphi)-M\cE(1,\varphi)\le 0$ for any $\varphi\in
\cF_+\cap C_0(E)$, $h-M$ is in 
${\cS}_-^{\rm{sub}}$
and so in $\widetilde{\cS}_-^{\rm{sub}}$ by Theorem \ref{eq}.
  Therefore, we see from the argument above that $h(x)-M\le p_t(h-M)(x)<0$, and 
  thus $h(x)<M\ \text{for all}\ x\in E$.
\end{proof}
%  we have the next  theorem. 
   
 \br
Theorem \ref{BP-1} (i) says that if $h\in\cS^{\rm sub}$, then
$\sup_{E\setminus K}h(x)=\sup_{E}h(x)$  for any compact set $K$.
\er

\bigskip
 We set 
\begin{align*}
{\bS}&=\left\{\{x_n\}_{n=1}^\infty \subset E\mid
\text{
%$x_n\to \triangle $ and 
$\lim_{n\to\infty }\bfE_{x_n}\left(e^{-\zeta}\right)=1$}
\right\},\\
\widetilde{\bS}&=\left\{\{x_n\}_{n=1}^\infty\subset E\mid
\text{
%$x_n\to \triangle $ and 
$\lim_{n\to\infty }\bfP_{x_n}(\zeta >\epsilon )\to 0$
 for $\forall \epsilon >0$}
\right\}.
\end{align*}

\bigskip

\begin{lem}{\rm (\cite[Lemma 3.1]{T-B})}\la{S=S} It holds that  
$$
\bS=\widetilde{\bS}.
$$
\end{lem}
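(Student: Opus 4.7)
The plan is to prove both inclusions by straightforward estimates splitting the expectation $\bfE_{x_n}(e^{-\zeta})$ at the level set $\{\zeta \leq \epsilon\}$ versus $\{\zeta > \epsilon\}$. No deep structure of $X$ is needed; this is essentially an elementary real-variable equivalence between convergence of an exponential moment to $1$ and concentration of $\zeta$ near $0$. The main (and only) obstacle is choosing the right two-sided bounds, which I sketch below.

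For the inclusion $\bS \subset \widetilde{\bS}$, fix $\epsilon > 0$ and estimate from above:
\begin{align*}
\bfE_{x_n}\!\left(e^{-\zeta}\right)
&= \bfE_{x_n}\!\left(e^{-\zeta};\zeta\leq\epsilon\right)+\bfE_{x_n}\!\left(e^{-\zeta};\zeta>\epsilon\right)\\
&\leq \bfP_{x_n}(\zeta\leq\epsilon)+e^{-\epsilon}\bfP_{x_n}(\zeta>\epsilon)\\
&= 1-(1-e^{-\epsilon})\bfP_{x_n}(\zeta>\epsilon).
\end{align*}
Thus $(1-e^{-\epsilon})\bfP_{x_n}(\zeta>\epsilon)\leq 1-\bfE_{x_n}(e^{-\zeta})\to 0$ as $n\to\infty$, giving $\bfP_{x_n}(\zeta>\epsilon)\to 0$.

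For the reverse inclusion $\widetilde{\bS}\subset\bS$, estimate from below for any fixed $\epsilon>0$:
\begin{align*}
\bfE_{x_n}\!\left(e^{-\zeta}\right)
&\geq \bfE_{x_n}\!\left(e^{-\zeta};\zeta\leq\epsilon\right)\\
&\geq e^{-\epsilon}\bfP_{x_n}(\zeta\leq\epsilon)
= e^{-\epsilon}\bigl(1-\bfP_{x_n}(\zeta>\epsilon)\bigr).
\end{align*}
Under the assumption $\bfP_{x_n}(\zeta>\epsilon)\to 0$, this gives $\varliminf_{n\to\infty}\bfE_{x_n}(e^{-\zeta})\geq e^{-\epsilon}$. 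Since $\epsilon>0$ was arbitrary we let $\epsilon\downarrow 0$ to obtain $\varliminf_{n}\bfE_{x_n}(e^{-\zeta})\geq 1$, and combined with the trivial bound $\bfE_{x_n}(e^{-\zeta})\leq 1$ we conclude $\lim_{n\to\infty}\bfE_{x_n}(e^{-\zeta})=1$, i.e.\ $\{x_n\}\in\bS$.

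There is no real obstacle; the entire content is packaged in the two elementary inequalities above, one using $e^{-\zeta}\leq e^{-\epsilon}$ on $\{\zeta>\epsilon\}$ and the other using $e^{-\zeta}\geq e^{-\epsilon}$ on $\{\zeta\leq\epsilon\}$. Since this lemma is cited from \cite[Lemma 3.1]{T-B}, the proof can be stated tersely in this form.
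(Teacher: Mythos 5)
Your proof is correct, and it is essentially the same elementary two-sided splitting argument as in the cited reference \cite[Lemma 3.1]{T-B} (the present paper only cites that lemma and gives no proof of its own). Both inclusions are handled cleanly, and the decomposition at $\{\zeta\le\epsilon\}$ correctly accounts for the convention $e^{-\zeta}=0$ on $\{\zeta=\infty\}$.
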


\bigskip
We introduce
\begin{equation*}
{\cS}_0^{\rm sub}=\left\{h\in{\cS}^{\rm sub}\cap C(E)\mid \varlimsup_{n\to\infty }h(x_n)\leq 0,\ \ 
\forall\{x_n\}_{n=1}^\infty \in \bS\right\}.
\end{equation*}
Note that a function in ${\cS}_0^{\rm sub}$ is supposed to be continuous.  
%$$
%\cH^{bb}(\mu)= \{ h \in \cD_{loc}(\cE) \cap C(X) \mid h\ \text{is bounded below},\  p_t^{\mu} h \leq  h\}.
%$$
Imitating the definition in \cite{BNV}, we define the {\it refined maximum principle} for $\cL$ as follows:

\bigskip
\noindent
\ \ \ \ (${\bf{RMP}}$)\ \ If $h\in {\cS}_0^{\rm sub}$, then $h(x)\leq 0$ for all $x\in E$.

\bigskip
\bl \la{s-i} Suppose that $\bfP_x(\zeta<\infty)=1$. 
For $x\in E$, there exists an increasing sequence $\{\sigma_n\}_{n=1}^\infty$ of stopping times such that
 \bequ\la{sequ}
\{X_{\sigma_n}\}_{n=1}^\infty\in \bS\ \ \bfP_x\text{\rm -a.s.}\ {\rm on}\ \displaystyle{\cap_{n=1}\{\sigma_n<\zeta\}}.
\eequ
\el

\begin{proof}
Let $\{K_l\}_{l=1}^\infty$ be an increasing sequence of compact sets with 
$\cup_{l=1}^\infty K_l=E$ and $\tau_l$ the 
 first exit time $K_l$. By the strong Markov property and $\{\tau_l<\zeta\}\in\sF_{{\tau_l}\wedge
 \zeta}=\sF_{\tau_l}$ (\cite[p.415(f)]{D}),
 \begin{align*}
 \bfE_x\left(\left(1-\bfE_{X_{\tau_l}}
 \left(e^{-\zeta}\right)\right);\tau_l<\zeta\right)&=\bfE_x\left(\bfE_x
 \left(\left(1-e^{-\zeta(\theta_{\tau_l})}\right)1_{\{\tau_l<\zeta\}}
 \Big|\sF_{\tau_l}\right)\right) \\
 &=\bfE_x\left(\left(1-e^{-(\zeta-\tau_l)}\right);\tau_l<\zeta\right).
 %&=\bfP_x(\tau_l<\zeta-\epsilon). 
 \end{align*}
 Since $\tau_l\uparrow\zeta$ as $l\to\infty$, the right hand side above tends to 0, and 
 thus  $\{\bfE_{X_{\tau_l}}\left(e^{-\zeta}\right)\}$ converges to 1 in 
 $L^1(\bfP_x;\Omega^p)$. Hence there exists a subsequence $\{\sigma_n\}$ of $\{\tau_l\}$, 
 $\{X_{\sigma_n}\}_{n=1}^\infty\in \bS\ \ \bfP_x\text{-a.s.}\ \text{on}\  
 \cap_{n=1}\{\sigma_n<\zeta\}$.  
\end{proof}

   \begin{thm}\la{BP}  Suppose that $\bfP_x(\zeta<\infty)=1$. 
   If $X$ satisfies $({{\rm I}})$ and $({{\rm SF}})$, then $({{\rm RMP}})$ holds.
 %Let $h$ be a function in $\cS^{\rm sub}$. If $\varlimsup_{n\to\infty}h(x_n)\le0$ 
 %  	for any $\{x_n\}\in\bS$, then $h(x)\le 0$ for any $x\in E$.    
       \end{thm}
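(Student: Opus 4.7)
The plan is to mirror the argument sketched in the Introduction (the derivation of~(\ref{ku}) and its consequences via reverse Fatou) but specialized to $\mu \equiv 0$, so that the gauge factor is identically $1$. The ingredients are Lemma~\ref{invariant-0}, which supplies a sub-mean-value inequality along a sequence of stopping times exhausting $\zeta$; Lemma~\ref{s-i}, which extracts a subsequence whose positions form a $\bS$-sequence; and the boundedness of $h^+$, which powers the reverse Fatou lemma.

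Fix $h \in \cS_0^{\rm sub}$ and $x \in E$. From Lemma~\ref{invariant-0} there exist stopping times $\sigma_n < \zeta$, $\sigma_n \uparrow \zeta$, with $h(x) \le \bfE^{\rm res}_x[e^{-A^k_{t \wedge \sigma_n}} h(X_{t \wedge \sigma_n})]$ q.e. Discarding the non-negative $h^-$ contribution as in Lemma~\ref{f-eq} and letting $t \to \infty$, the term $\bfE^{\rm res}_x[e^{-A^k_t} h(X_t); t < \zeta] = \bfE_x[h(X_t); t < \zeta]$ (using the identification of $X$ as the subprocess of $X^{\rm res}$ via $\exp(-A^k_t)$; cf.\ \cite[Theorem~5.2.17]{CF}) is dominated above by $\|h^+\|_\infty \bfP_x(t < \zeta) \to 0$ because $\bfP_x(\zeta<\infty)=1$. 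Applying the same subprocess identity at $\sigma_n$ yields
\[
h(x) \le \varlimsup_n \bfE^{\rm res}_x\!\left[ e^{-A^k_{\sigma_n}} h^+(X_{\sigma_n}) \right] = \varlimsup_n \bfE_x\!\left[ h^+(X_{\sigma_n}); \sigma_n < \zeta \right] \quad \text{q.e.}
\]

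Next I would run the proof of Lemma~\ref{s-i} verbatim with this sequence in place of the exit times: the strong Markov computation $\bfE_x[1 - \bfE_{X_{\sigma_n}}(e^{-\zeta}); \sigma_n < \zeta] = \bfE_x[1 - e^{-(\zeta - \sigma_n)}; \sigma_n < \zeta] \longrightarrow 0$ is available because $\sigma_n \uparrow \zeta$ and $\zeta$ is $\bfP_x$-a.s.\ finite, so a subsequence $\{\sigma_{n_k}\}$ satisfies $\{X_{\sigma_{n_k}}\} \in \bS$ $\bfP_x$-a.s.\ on $\bigcap_k \{\sigma_{n_k} < \zeta\}$. Continuity of $h$ together with the defining property of $\cS_0^{\rm sub}$ then force $\varlimsup_k h(X_{\sigma_{n_k}}) \le 0$ on this event, hence $\varlimsup_k h^+(X_{\sigma_{n_k}}) \mathbf{1}_{\{\sigma_{n_k} < \zeta\}} = 0$ $\bfP_x$-a.s.\ (the complementary event is absorbed by the indicator, since $\{\sigma_n\}$ is increasing). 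A reverse Fatou (integrand dominated by $\|h^+\|_\infty$) closes the argument:
\[
h(x) \le \varlimsup_k \bfE_x\!\left[ h^+(X_{\sigma_{n_k}}); \sigma_{n_k} < \zeta \right] \le \bfE_x\!\left[ \varlimsup_k h^+(X_{\sigma_{n_k}}) \mathbf{1}_{\{\sigma_{n_k} < \zeta\}} \right] = 0 \quad \text{q.e.},
\]
and (SF) upgrades this from q.e.\ to every $x \in E$.

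The main obstacle I anticipate is keeping the two probability measures $\bfP_x$ and $\bfP^{\rm res}_x$ correctly separated through the $t \to \infty$ passage: only a one-sided bound on $h$ is available (so one controls the integrand from above by $\|h^+\|_\infty$ rather than in absolute value), and the subprocess identity $\bfE^{\rm res}_x[e^{-A^k_\sigma} f(X_\sigma)] = \bfE_x[f(X_\sigma)]$ must be invoked at the Fukushima-decomposition stopping times produced by Lemma~\ref{invariant-0} rather than at exit times. The re-use of Lemma~\ref{s-i} with these $\sigma_n$ is painless, since its proof relies only on the strong Markov property combined with $\sigma_n < \zeta$ and $\sigma_n \uparrow \zeta$.
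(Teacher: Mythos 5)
Your proposal is correct and follows essentially the same route as the paper's proof: the sub-mean-value inequality of Lemma~\ref{invariant-0}, the subprocess identity $\bfE^{\rm res}_x\left(e^{-A^k_{\sigma_n}}h^+(X_{\sigma_n})\right)=\bfE_x\left(h^+(X_{\sigma_n})\right)$, the extraction via Lemma~\ref{s-i} of a subsequence with $\{X_{\sigma_n}\}\in\bS$ on $\cap_n\{\sigma_n<\zeta\}$, and the reverse Fatou lemma powered by $\|h^+\|_\infty<\infty$. The only cosmetic difference is that you pass through the decomposition of Lemma~\ref{f-eq} and use $\bfP_x(\zeta<\infty)=1$ to kill the $\{t<\zeta\}$ term, whereas the paper takes $\varlimsup_{t\to\infty}$ for fixed $n$ directly in \eqref{main-eq}; both land on the same key inequality and finish identically.
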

  
 \begin{proof}
%Let $h^0:=h\vee 0$. By Lemma \ref{su-prob},  $p_th^0(x)\ge p_th(x)\vee p_t0\ge h(x)\vee 0=h^0(x)$. 
Let $h\in {\cS}_0^{\rm sub}$ and $\{\sigma_n\}$ a sequence of stopping times defined in 
Lemma \ref{invariant-0}. We can suppose that
$$
\dis{\{X_{\sigma_n}\}_{n=1}^\infty\in\bS\ \ \bfP_x\text{-a.s.}\ \text{on}\ \cap_{n=1}\{\sigma_n<\zeta\}.}
$$
Then by (\ref{s})
%$$
%h(x)\le\bfE^{\rm res}_x\left(e^{-A^k_{t\wedge \sigma_n}}h(X_{t\wedge \sigma_n})\right),
%$$
\begin{align}\la{main-eq}
h(x)&\le\varlimsup_{t\to\infty}\bfE^{\rm res}_x\left(e^{-A^k_{t\wedge \sigma_n}}h^+(X_{t
 			\wedge \sigma_n})\right)\le\bfE^{\rm res}_x\left(e^{-A^k_{\sigma_n}}h^+(X_{\sigma_n})\right)\\
 			&=\bfE_x\left(h^+(X_{\sigma_n})\right)\nonumber
\end{align}
and so by the definition of ${\cS}_0^{\rm sub}$
$$
h(x)\le\varlimsup_{n\to\infty}\bfE_x\left(h^+(X_{\sigma_n})\right)\le
\bfE_x\left(\varlimsup_{n\to\infty}h^+(X_{\sigma_n})\right)=0.
$$ 			
\end{proof}

%\begin{rem} 
%Let $\bar{E}$ be a compactification of $E$ and suppose that a subsolution $h$ in $C(\bar{E})$ and
% $\sup_{x\in E}h(x)(=\max_{x\in \bar{E}}h(x))=M\ge 0$. Theorem 
%\ref{BP} says that if $h$ is not constant, the maximum of $h$ attains at a point in 
%$\partial E:=\bar{E}\setminus E$. 
%Define $h^-=h\vee 0-M$. Then $h^-\in \cS_-^{\rm{sub}}$ and thus if 
%$\sup_{x\in\partial E}h\vee 0=M$, then $\sup_{x\in E}h\vee 0=M$, 
%In particular, 
%$\sup_{x\in\partial E}h\vee 0=0$, then $\sup_{x\in E}h\vee 0=0$. In other words, 
%if $h\le0$, $\forall x\in\partial E$, then $h\le0$, $\forall x\in E$. This fact also follows from (\ref{tou}).
%\end{rem}

%%%%%%%%%%%%%%%%%%%%%%%%%%%%%%%%%%%%%%%%%%%%%%%%%%%%%%%%%%%%%%%%%%%%%%%%%%%%%%%%%%%%%%
\section{Maximum principle for Schr\"odinger forms}
%We first treat the case that $\lambda( \mu ) > 1$.

Under {{(AC)}}, there exists a non-negative, jointly measurable $\alpha$-resolvent kernel $r_{\alpha}(x,y)$: 
$$
R_\alpha f(x)=\int_Er_\alpha (x,y)f(y)m(dy),\ x\in E,\ f\in {\mathscr B}_b(E).
$$
Moreover, $r_\alpha (x,y)$ is $\alpha $-excessive in $x$ and in $y$ (\cite[Lemma 4.2.4]{FOT}).
We  simply write $r(x,y)$ for $r_{0}(x,y)$. 
For a measure $\mu$, we define the $\alpha $-potential of $\mu$ by
\begin{equation*}
R_{\alpha} \mu(x) = \int_{E} r_{\alpha}(x,y) \mu(dy).
\end{equation*}
We  write $R\mu$ for $R_{0}\mu$. 
%If $\alpha = 0$, we define 
%\begin{equation*}
%G \mu(x) := R_0 \mu(x).
%\end{equation*}

\medskip
%We define some classes of smooth measures. We call a Borel measure $\mu$ on $E$ {\it smooth} if there exists a generalized compact nest $\{K_n\}_{n=1}^\infty$ such that $\mu(E\setminus\cup_{n=1}^\infty K_n)=0$ and $1_{K_n}\cdot\mu\in \cS_{00}$ for each $n$, where $\cS_{00}$ is the set 
%of finite, positive Radon measure of finite energy with bounded 1-potential, $\sup_{x\in E}R_1\mu(x)<\infty$  (\cite[Theorem 2.2.4]{FOT}). We denote by $\cS$ the set of smooth measures. 

We call a Borel measure $\mu$ on $E$ {\it smooth in the strict sense} if there exists a sequence $\{E_n\}$ of Borel sets such that 
for each $n$, $1_{E_n}\cdot\mu\in \cS_{00}$ where $\cS_{00}$ is the set 
of finite, positive Radon measure of finite energy with bounded 1-potential, $\sup_{x\in E}R_1\mu(x)<\infty$  (\cite[Theorem 2.2.4]{FOT}), 
and
$$
\bfP_x(\lim_{n\to\infty}\sigma_n\geq\zeta)=1,\ \ \forall x\in E,
$$
where $\sigma_n=\inf\{t>0\mid X_t\in E\setminus E_n\}$. In particular, a Radon measure 
$\mu$ with $\sup_{x\in E}R_1\mu(x)<\infty$ is smooth in the strict sense.
We denote by $\cS$ the set of smooth measures in the strict sense.

\begin{defi}\label{def-Kato}
Let $\mu\in \cS$.
\begin{enumerate}[(1)]
	\item $\mu$ is said to be in the {\it Kato class} of $X$
	($\cK$ in abbreviation)
	if 
	\begin{equation*}
	\lim_{\alpha \to \infty} \sup_{x\in E}R_\alpha\mu(x) = 0.
	\end{equation*}
	$\mu$ is said to be in the {\it local Kato class} ($\cK_{\loc}$ in abbreviation)
	if for any compact set $K$,  $1_K\cdot \mu$ belongs to $\cK$.	
	\item Suppose that $X$ is transient. A measure $\mu$ is said
	to be in the class $\cK_{\infty}$ if for any $\eps > 0$, 
	there exists a compact set $K = K(\eps)$ 
	\begin{equation*}
	\sup_{x \in E} R(1_{K^c} \mu)(x) < \eps.
	\end{equation*}
	$\mu$ in $\cK_{\infty}$ is called {\it Green-tight}. 
%	for all measurable sets $B \subset K$ with $\mu(B) < \del$. 
%	\item Suppose that $E$ is recurrent. A measure $\mu$ is said
%	to be in the class $\cK_{\infty}^1$ 
%	if $\mu$ satisfies the property in the definition (2) by replacing $G(x,y)$ by $R_1(x,y)$. 
\end{enumerate}
\end{defi}

In this section, we assume that $X$ is transient in addition to $({{I}})$ and $({{SF}})$. For  
a measure in $\mu\in\cK_\infty$ define the Schr\"odinger form $\cE^\mu$ by
$$
\cE^\mu(u)=\cE(u)-\int_E\widetilde{u}^2d\mu,\ \ u\in\cF.
$$
Here $\widetilde{u}$ is a quasi-continuous version of $u$. 
 In the sequel, we always assume that every function $u\in\cF$ is represented by its
quasi-continuous version.
We see that for $\mu\in\cK$ $(\cE^\mu,\cF)$ is a lower bounded closed 
symmetric form (\cite[Theorem 3.1]{Stol-Vo}).
We denote by $\cL^\mu$ the self-adjoint 
operator associated with a Schr\"odinger form $(\cE^\mu,\cF)$,
$$
(-\cL^\mu u,v)_m
$$
It is known that for $\mu\in\cK$ the right hand side is a lower bounded closed symmetric form
and the semigroup $\{p^\mu_t\}$ generated by $\cL^\mu$ is written as a Feynman-Kac semigroup
$$
p^\mu_tf(x)=\bfE_x\left(e^{A^\mu_t}f(X_t)\right), 
$$
where $A^\mu_t$ is the PCAF in the Revuz correspondence to $\mu$. $p^\mu_t$ has 
also the strong Feller property (\cite{Ch}).
A function $h\in \cF^\dagger_{\loc}\cap L^\infty_{\loc}$ is 
called a {\it solution} ({\it subsolution}, {\it supersolution}) to
$\cL^\mu u=0$ if 
$$
\cE^\mu(h,\varphi)=0 \ (\leq 0,\ \geq 0) \ \text{for any}\ \varphi\in\cF_+\cap C_{0}(E).
$$
We write ${\bf S}^\mu\,({\bf S}^{\mu,\rm sub},\ {\bf S}^{\mu,\rm sup})$ for 
the space of {solutions} ({subsolutions}, {supersolutions}) 
and introduce the function spaces:
\begin{align}\la{sol}
\cS^{\mu,\rm sub}&= \{h\in {\bf S}^{\mu,\rm sub}\mid   \|h^+\|_\infty<\infty\},\\
\cS^{\mu,\rm sup}&= \{h\in {\bf S}^{\mu,\rm sup}\mid  \|h^-\|_\infty>-\infty\},\\
\widetilde{\cS}^{\mu,\rm sub}&= \{ h\in\cF^\dagger_{\loc}\cap L^\infty_{\loc}\mid  
 \|h^+\|_\infty<\infty,\   p^\mu_th \ge h\ {\rm q.e.}\},\\
\widetilde{\cS}^{\mu,\rm sup}&= \{ h\in\cF^\dagger_{\loc}\cap L^\infty_{\loc}\mid  
 \|h^-\|_\infty>-\infty,\   p^\mu_th \le h\ {\rm q.e.}\}.
\end{align}

Applying It$\hat{\rm o}$'s formula to $e^{-A^k_{t\wedge \sigma_n}
+A^\mu_{t\wedge \sigma_n}}h(X_{t\wedge \sigma_n})$, 
we have the next inequality similarly to (\ref{s}):
 		\begin{align}\la{s-1}
 			h(x)\le\bfE^{\rm res}_x\left(e^{-A^k_{t\wedge \sigma_n}
 			+A^\mu_{t\wedge \sigma_n}}h(X_{t\wedge \sigma_n})\right)\ \ \text{q.e.}\ x .
 		\end{align} 	
Using the equation (\ref{s-1}), we can extend results  in Section 2 and Section 3 to 	
$\cS^{\mu,\rm sub}$ and $\widetilde{\cS}^{\mu,\rm sub}$ 
($\cS^{\mu,\rm sup}$ and $\widetilde{\cS}^{\mu,\rm sup}$).	In particular, we have the next theorem.
   \begin{thm}\la{eq-1}
 It holds that
 $$
 {\cS}_-^{\mu,\rm{sub}}=\widetilde{\cS}_-^{\mu,\rm{sub}},\ \ \ \
 {\cS}_+^{\mu,\rm{sup}}=\widetilde{\cS}_+^{\mu,\rm{sup}}.
 $$ 
       \end{thm}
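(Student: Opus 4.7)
The plan is to adapt the proof of Theorem \ref{eq} to the Schr\"odinger setting, establishing the two inclusions $\widetilde{\cS}_-^{\mu,\rm sub}\subset\cS_-^{\mu,\rm sub}$ and $\cS_-^{\mu,\rm sub}\subset\widetilde{\cS}_-^{\mu,\rm sub}$ separately; the identity for $\cS_+^{\mu,\rm sup}$ and $\widetilde{\cS}_+^{\mu,\rm sup}$ then follows by the symmetry $h\mapsto -h$. The essential new input is the inequality \eqref{s-1}, which plays here the role that \eqref{s} played in Section 2, together with the fact that the Feynman--Kac semigroup $p_t^\mu$ is positivity-preserving and therefore monotone.

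For $\widetilde{\cS}_-^{\mu,\rm sub}\subset\cS_-^{\mu,\rm sub}$ I would imitate Lemma \ref{H-tildeH}. Fix $\varphi\in\cF_+\cap C_0(E)$ with $K=\supp[\varphi]$, choose a relatively compact open set $G\supset K$, and take $\{\psi_n\}$ as in Lemma \ref{G-tildeH}. Since $\psi_n\equiv 1$ on $G$, one has $h\psi_n=h$ on $G$, hence $(h\psi_n,\varphi)_m=(h,\varphi)_m$. Because $h\le 0$ and $0\le\psi_n\le 1$ we have $h\psi_n\ge h$, so by the monotonicity of $p_t^\mu$,
$$
p_t^\mu(h\psi_n)\ge p_t^\mu h\ge h\quad\text{q.e.}
$$
Combining these two facts,
$$
(h\psi_n-p_t^\mu(h\psi_n),\varphi)_m=(h,\varphi)_m-(p_t^\mu(h\psi_n),\varphi)_m\le (h-p_t^\mu h,\varphi)_m\le 0,
$$
and dividing by $t$ and letting $t\downarrow 0$ yields $\cE^\mu(h\psi_n,\varphi)\le 0$. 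The analogue of Lemma \ref{G-tildeH} for $\cE^\mu$ then gives $\cE^\mu(h,\varphi)\le 0$: the $\cE$-part is exactly Lemma \ref{G-tildeH}, while the extra term $\int h\psi_n\varphi\,d\mu\to\int h\varphi\,d\mu$ by dominated convergence, since $\varphi$ has compact support, $h$ is locally bounded, and $\mu$ is Radon.

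For the reverse inclusion I would use \eqref{s-1} exactly as \eqref{s} was used in Lemma \ref{invariant}. Given $h\in\cS_-^{\mu,\rm sub}$ (so $-h\ge 0$), Fatou's lemma applied on $\{t<\zeta\}$ to the non-negative integrand $e^{-A^k_{t\wedge\sigma_n}+A^\mu_{t\wedge\sigma_n}}(-h)(X_{t\wedge\sigma_n})$ yields
$$
\bfE^{\rm res}_x\bigl(e^{-A^k_t+A^\mu_t}h(X_t);t<\zeta\bigr)\ge\varlimsup_{n\to\infty}\bfE^{\rm res}_x\bigl(e^{-A^k_{t\wedge\sigma_n}+A^\mu_{t\wedge\sigma_n}}h(X_{t\wedge\sigma_n})\bigr)\ge h(x)\quad\text{q.e.},
$$
the last inequality being \eqref{s-1}. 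Since $X$ is the subprocess of $X^{\rm res}$ by $\exp(-A^k_t)$, the left-hand side equals $\bfE_x(e^{A^\mu_t}h(X_t);t<\zeta)=p_t^\mu h(x)$. Thus $h\le p_t^\mu h$ q.e., which places $h$ in $\widetilde{\cS}_-^{\mu,\rm sub}$.

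The main obstacle is not conceptual but concerns the rigorous justification of \eqref{s-1} that the authors invoke by analogy with \eqref{s}. One needs to apply It\^o's formula to $e^{-A^k_{t\wedge\sigma_n}+A^\mu_{t\wedge\sigma_n}}h(X_{t\wedge\sigma_n})$, using the Fukushima--Kuwae decomposition of $h(X)$ localized by $\sigma_n=S_n\wedge\tau_n$, and verify that the stochastic integral with respect to the martingale part is a true martingale. This is where the Kato condition $\mu\in\cK$ enters: it gives local exponential integrability of $A^\mu$, hence $\bfE^{\rm res}_x(e^{A^\mu_{t\wedge\tau_n}})<\infty$, so the estimate \eqref{ku-fu} on $[[0,S_n\wedge\tau_n]]$ still controls the martingale term after multiplication by the bounded factor $e^{-A^k+A^\mu}$. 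Everything else is a direct transcription of the arguments of Section 2.
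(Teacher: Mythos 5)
Your proof is correct and follows essentially the same route as the paper: the paper establishes Theorem \ref{eq-1} only by remarking that the inequality (\ref{s-1}) allows one to transcribe the arguments of Section 2 (Lemmas \ref{G-tildeH}, \ref{H-tildeH}, \ref{invariant-0} and \ref{invariant}) to the Feynman--Kac semigroup $p^\mu_t$, which is precisely the extension you carry out. Your closing remarks on justifying (\ref{s-1}) through the Kato condition address a technical point the paper itself leaves implicit, so no gap is introduced relative to the paper's own level of detail.
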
		

\medskip
 Let $g^\mu(x)$ be the so-called {\it gauge function}: 
\begin{equation}\la{gauge-fun}
g^\mu(x) = \bfE_x \left(e^{A_{\zeta}^{\mu}}\right).
\end{equation}
Define 
\begin{equation}\la{j-ground3}
	\lambda (\mu)= \inf \left\{ \cE(u)\, \Big|\  u \in
	\cF,\ \int_E u^2 d\mu = 1\right\}.
\end{equation}
We then know in \cite[Theorem 5.1]{C} that for $\mu\in\cK_\infty$
%$g^\mu$ is bounded if and only if $\lambda (\mu)>1$:
\bequ\la{gau-th}
\lambda (\mu)>1\ \Longleftrightarrow\ \sup_{x\in E}g^\mu(x)<\infty.
\eequ

\begin{lem}\la{tsu-0} {\rm (\cite[Lemma 5.2]{T3})} If $\lambda (\mu)>1$, then the function
 $g^\mu$ is $p^{\mu}_{t}$-excessive, i.e.,  
	$p^{\mu}_tg^\mu(x)\uparrow g^\mu(x)$ as $t\downarrow 0$.
\end{lem}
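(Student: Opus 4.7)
The plan is to compute $p^\mu_t g^\mu$ explicitly via the strong Markov property, reduce it to a monotone limit, and then invoke the gaugeability (\ref{gau-th}) to control integrability.

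First, I would unfold the Feynman--Kac semigroup applied to the gauge function. By definition,
\begin{equation*}
p^\mu_t g^\mu(x)=\bfE_x\!\left(e^{A^\mu_t}g^\mu(X_t);\,t<\zeta\right),
\end{equation*}
since $g^\mu(\Delta)=0$. Applying the Markov property at time $t$ to the inner expectation defining $g^\mu(X_t)$, together with the additive functional identity $A^\mu_t+A^\mu_{\zeta\circ\theta_t}\!\circ\theta_t=A^\mu_\zeta$ on $\{t<\zeta\}$, one obtains
\begin{equation*}
p^\mu_t g^\mu(x)=\bfE_x\!\left(e^{A^\mu_\zeta};\,t<\zeta\right).
\end{equation*}

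Next, I would monitor this quantity as $t\downarrow 0$. The events $\{t<\zeta\}$ increase to $\{\zeta>0\}$, which has full $\bfP_x$-measure because $X$ is a Hunt process (quasi-left continuity at $0$ and the convention $X_0=x\in E$). Hence the indicators $\mathbf 1_{\{t<\zeta\}}$ increase monotonically to $1$, $\bfP_x$-a.s. The gaugeability hypothesis $\lambda(\mu)>1$ provides, through (\ref{gau-th}), that $e^{A^\mu_\zeta}$ is $\bfP_x$-integrable with uniformly bounded expectation. Monotone convergence therefore yields
\begin{equation*}
p^\mu_t g^\mu(x)\;\uparrow\;\bfE_x\!\left(e^{A^\mu_\zeta}\right)=g^\mu(x)\quad\text{as }t\downarrow 0,
\end{equation*}
which is exactly $p^\mu_t$-excessivity of $g^\mu$.

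The only subtle point is the additive-functional manipulation $A^\mu_\zeta=A^\mu_t+A^\mu_\zeta\circ\theta_t$ on $\{t<\zeta\}$, which is standard for PCAFs of Hunt processes but is where one has to be careful; beyond that, the argument is just monotone convergence under the uniform bound supplied by gaugeability. No reference to the resurrected process or predictable/inaccessible decomposition is needed here.
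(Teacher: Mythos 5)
Your argument is correct and is essentially the proof of the cited reference \cite[Lemma 5.2]{T3} (the paper itself offers only the citation, no proof): unfolding the Feynman--Kac semigroup via the Markov property and PCAF additivity gives $p^{\mu}_t g^\mu(x)=\bfE_x\left(e^{A^\mu_\zeta};\,t<\zeta\right)$, which increases to $g^\mu(x)$ as $t\downarrow 0$ since $\bfP_x(\zeta>0)=1$. One small remark: monotone convergence needs no integrability here, so the gaugeability from (\ref{gau-th}) is only used to ensure $g^\mu$ is finite (indeed bounded), not to justify the limit.
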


\begin{lem}{\rm (\cite[Lemma 5.4]{T3})}\label{lem-2-2}
 If $\lambda (\mu)>1$, then $g^\mu$ belongs to ${\cF}^\dagger_{\loc}\cap C(E)$.
\end{lem}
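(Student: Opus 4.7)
The plan is to establish the two parts $g^\mu\in\cF^\dagger_{\loc}$ and $g^\mu\in C(E)$ separately, using gaugeability (\ref{gau-th}) (which provides $M:=\|g^\mu\|_\infty<\infty$) and the $p^\mu_t$-excessiveness from Lemma \ref{tsu-0}.

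For the local Dirichlet-space membership I would adapt the truncation argument of Lemma \ref{g-ex}. The key observation is that since $e^{A^\mu_t}\geq 1$, we have $p^\mu_t u\geq p_t u$ for every non-negative $u$, so any bounded $p^\mu_t$-excessive function is also $e^{-t}p_t$-excessive. Picking a strictly positive $\varphi\in L^2(E;m)\cap C_b(E)$ and setting
\[
g_n:=g^\mu\wedge (nR_1\varphi),
\]
each $g_n$ lies in $L^2(E;m)$, is dominated by the $\cF$-function $nR_1\varphi$, and is $e^{-t}p_t$-super-averaging as the minimum of two such functions; thus $g_n\in\cF$ by \cite[Lemma 2.3.2]{FOT}. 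The sets $O_n:=\{R_1\varphi>M/n\}$ are open (continuity of $R_1\varphi$ by (SF)), increase to $E$ (since $R_1\varphi>0$ everywhere by irreducibility), and on $O_n$ one has $nR_1\varphi>M\geq g^\mu$, so $g_n=g^\mu$ on $O_n$. This gives $g^\mu\in\cF_{\loc}$, and boundedness of $g^\mu$ upgrades this to $\cF^\dagger_{\loc}$ by the remark noted in the introduction.

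For the continuity $g^\mu\in C(E)$, I would use integration by parts for the PCAF $A^\mu$: $e^{A^\mu_\zeta}=1+\int_0^\zeta e^{A^\mu_s}\,dA^\mu_s$, whence
\[
g^\mu(x)-1=\bfE_x\!\left(\int_0^\zeta e^{A^\mu_s}\,dA^\mu_s\right)=R^\mu\mu(x),
\]
the $0$-potential of $\mu$ under the Feynman--Kac semigroup $p^\mu_t$. Since $p^\mu_t$ is strong Feller and $\mu\in\cK_\infty$, splitting $\mu=1_K\mu+1_{K^c}\mu$ for a suitable compact $K$ makes the far-away contribution uniformly small by Green-tightness, while the near contribution has bounded continuous potential thanks to strong Feller applied to potentials of compactly supported Kato measures; this yields $R^\mu\mu\in C_b(E)$ and hence $g^\mu\in C(E)$.

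The main obstacle will be the continuity part: proving $R^\mu\mu\in C_b(E)$ requires a careful $\varepsilon$-splitting argument combining Green-tightness with the strong Feller property of the perturbed semigroup. This estimate is essentially due to Chen \cite{C}, which I would invoke rather than reprove. The truncation step, by contrast, is essentially bookkeeping once the correct notion of excessiveness is identified.
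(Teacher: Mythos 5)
The paper offers no internal proof of this lemma --- it is imported verbatim from \cite[Lemma 5.4]{T3} --- so there is nothing to compare line by line; judged on its own, your reconstruction is essentially correct. The first half is exactly the mechanism of the paper's Lemma \ref{g-ex}: since $A^\mu_t\ge 0$ and $g^\mu\ge 0$, one has $p_tg^\mu\le p^\mu_tg^\mu\le g^\mu$ by Lemma \ref{tsu-0}, so the bounded function $g^\mu$ is $1$-supermedian; the truncations $g_n=g^\mu\wedge nR_1\varphi$ are again $1$-supermedian, dominated by $nR_1\varphi\in\cF$, hence in $\cF$ by \cite[Lemma 2.3.2]{FOT}, and they agree with $g^\mu$ on the open exhaustion $O_n=\{R_1\varphi>M/n\}$, which gives $\cF_{\loc}$ and, by boundedness, $\cF^\dagger_{\loc}$. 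The continuity half is the genuinely nontrivial part, and you have located it correctly, but your sketch outsources the real work. Two adjustments would close it cleanly. First, rather than $g^\mu=1+R^\mu\mu$, use the equivalent identity $g^\mu=1+R(g^\mu\mu)$, obtained from $e^{A^\mu_\zeta}-1=\int_0^\zeta e^{A^\mu_\zeta-A^\mu_s}\,dA^\mu_s$ and the Markov property: since $g^\mu$ is bounded (by gaugeability (\ref{gau-th})), the measure $g^\mu\mu$ is again in $\cK_\infty$, so one only needs continuity of $R\nu$ for a Green-tight Kato measure $\nu$ under the doubly Feller hypotheses --- the compact-part/tail splitting you describe --- with no Feynman--Kac resolvent estimates at all. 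Second, the attribution: that regularity statement comes from Chung \cite{Ch} and \cite{T3}, not from Chen \cite{C}; what \cite{C} supplies is the gaugeability equivalence (\ref{gau-th}) you use for boundedness. With those corrections the argument is complete and is, in substance, the proof of \cite[Lemma 5.4]{T3}.
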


\medskip
We introduce the function space of strictly positive $p^\mu_t$-excessive functions by
\begin{equation}\la{mu-exc}
\widetilde{\cS}_{++}^{\text{sup}}(\mu) = \{h\in{\cF}^\dagger_{\loc}\cap C(E)\mid h>0,\    p_t^{\mu} h \le h\}.
\end{equation}
We see that for $h\in\widetilde{\cS}_{++}^{\text{sup}}(\mu)$ the bilinear form 
$(\cE^{\mu,h},\cF^{\mu,h})$ on $L^2(E;h^2m)$ defined by
\begin{equation}\la{h-form}
\left\{
\begin{split}
& \cE^{\mu,h} (u) = \cE^{\mu} (hu)  \\
& \cF^{\mu,h} = \{ u \in L^2(E;h^2m)\mid hu\in \cF\}
\end{split}
\right.
\end{equation}
is a {regular Dirichlet form} on $L^2(E;h^2m)$. This fact can be proved in the manner of
\cite[Theorem 6.3.2]{FOT}. As a result, if $\cS_{++}^{\text{sup}}(\mu)$ 
is not empty, then $(\cE^\mu,\cF)$ is positive semi-definite,
$$
\cE^{\mu} (u) = \cE^{\mu,h} (u/h)\geq 0, 
$$
consequently, $\gamma(\mu)$ in (\ref{j-ground3}) is non-negative.
 Lemma \ref{tsu-0} and Lemma \ref{lem-2-2} tell us that   
if $\lambda(\mu)>1$, then the gauge function $g^\mu$ belongs to 
$\widetilde{\cS}_{++}^{\text{sup}}(\mu)$. Hence, we have
\bl For $\mu\in\cK_\infty$
\bequ\la{r-g-g}
\lambda(\mu)>1\ \Longrightarrow\ \gamma(\mu)\ge0.
\eequ
\el

\medskip
Let $h\in\cS^{\mu,\rm sub}$. Since for $\varphi\in\cF^{\mu,g^\mu}_+\cap C_0(E)$, 
$g^\mu\varphi$ belongs to $\in\cF_+\cap C_0(E)$ by Lemma \ref{lem-2-2},  
%by the definition of $\cE^{\mu,g^\mu}$ 
the function $h/g^\mu$ satisfies
$$
\cE^{\mu,g^\mu}(h/g^\mu,\varphi)=\cE^{\mu}(h,g^\mu\varphi)\le 0,\ 
\ \forall \varphi\in\cF^{\mu,g^\mu}_+\cap C_0(E),
$$
that is, $h/g^\mu$ is a subsolution of $\cE^{\mu,g^\mu}$.
Therefore, noting that $1\le g^\mu\le M<\infty$, we see from Theorem \ref{BP-1} (i) that
\begin{align}\la{e-q}
\sup_{x\in E}h(x)=0&\ \Longleftrightarrow\ \sup_{x\in E}\frac{h}{g^\mu}(x)=0\nonumber\\
&\ \Longleftrightarrow\ 
\frac{h}{g^\mu}(x)\equiv 0\ \text{or}\ \frac{h}{g^\mu}(x)<0\ \text{for all}\ x\in E\\
&\ \Longleftrightarrow\ 
h(x)\equiv 0\ \text{or}\ h(x)<0\ \text{for all}\ x\in E.\nonumber
\end{align}

\bigskip
\noindent
The equation (\ref{e-q}) can be shown without the condition $\lambda(\mu)>1$. Indeed, we have 
   \begin{thm}\la{BP-2}
   Suppose that $X$ satisfies $({{I}})$ and $({{SF}})$.\\   	
   	{\rm (i)} Let $h\in\cS_-^{\mu,\rm sub}\cap C(E)$ such that $\sup_{x\in E}h(x)=0$.
   	Then $h\equiv 0$ or $h(x)<0$ for all $x\in E$.\\   	
   	{\rm (ii)} Let $h\in\cS_+^{\mu,\rm sup}\cap C(E)$ such that $\inf_{x\in E}h(x)=0$.
   	Then $h\equiv 0$ or $h(x)>0$ for all $x\in E$.
 \end{thm}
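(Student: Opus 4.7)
The plan is to mimic the proof of Theorem~\ref{BP-1}, replacing the transition semigroup $\{p_t\}$ by the Feynman--Kac semigroup $\{p^\mu_t\}$ and Theorem~\ref{eq} by Theorem~\ref{eq-1}. Part (ii) will follow at once from (i) applied to $-h\in\cS_-^{\mu,\rm sub}\cap C(E)$, so I focus on (i).

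Assume $h\in\cS_-^{\mu,\rm sub}\cap C(E)$ satisfies $\sup_E h=0$ and $h\not\equiv 0$, and set $G=\{x\in E\mid h(x)<0\}$. Continuity and non-positivity of $h$ with $h\not\equiv 0$ make $G$ a nonempty open set, so $\mathrm{Cap}(G)>0$. By $(\mathrm{I})$ and the $(\mathrm{AC})$ implied by $(\mathrm{SF})$, the argument recalled just before Theorem~\ref{BP-1} yields $\bfP_x(\sigma_G<\zeta)>0$ for every $x\in E$. Theorem~\ref{eq-1} places $h$ in $\widetilde{\cS}_-^{\mu,\rm sub}$, so $p^\mu_t h\ge h$ q.e.; because $h=0$ on $E\setminus G$ gives $h=h\,1_G$, integrating against $e^{-t}\,dt$ yields
\begin{equation*}
h(x)\ \le\ R^\mu_1 h(x)\ =\ \bfE_x\!\left(\int_0^\zeta e^{-t}e^{A^\mu_t}\,h(X_t)\,1_G(X_t)\,dt\right)\qquad\text{q.e. }x.
\end{equation*}
The integrand is strictly negative on $\{X_t\in G\}$, and $\bfP_x(\sigma_G<\zeta)>0$ together with $(\mathrm{AC})$ and the strong Markov property forces the right-hand side to be strictly negative for every $x\in E$, exactly as in the paragraph preceding Theorem~\ref{BP-1}. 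This gives $h(x)\le R^\mu_1 h(x)<0$ q.e.

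The main obstacle is to upgrade the q.e.\ inequality $h\le R^\mu_1 h$ to a pointwise one at every $x\in E$. Since $p^\mu_t$ has the strong Feller property by \cite{Ch}, $R^\mu_1 g$ is continuous whenever $g$ is bounded; but $h\in\cS^{\mu,\rm sub}$ only bounds $h^+$, so $h^-$ may fail to be bounded. I would handle this by truncating to $h\vee(-n)$, using continuity of $R^\mu_1(h\vee(-n))$ to pass from a q.e.\ inequality to a pointwise one at each level $n$, and then letting $n\to\infty$ via monotone convergence together with the continuity of $h$. Once continuity of $R^\mu_1 h$ is in hand, the q.e.\ inequality extends to all of $E$, and combined with the strict negativity $R^\mu_1 h<0$ on $E$ this produces $h(x)<0$ for every $x\in E$, completing (i).
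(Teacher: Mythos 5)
Your proposal is correct and follows essentially the same route as the paper's proof: pass to $G=\{h<0\}$, use (I) and (AC) to get $\bfP_x(\sigma_G<\zeta)>0$, and integrate the q.e. inequality $h\le p^\mu_t h$ (from Theorem \ref{eq-1}) against $e^{-t}\,dt$ to force strict negativity. Your extra truncation step to upgrade the q.e. inequality to a pointwise one via the strong Feller property of $p^\mu_t$ is a reasonable way to make explicit a point the paper's proof passes over silently.
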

   	 \begin{proof} 
We will give only the proof of (i).  By the same argument as that stated in the previous paragraph of Theorem 
\ref{BP-1} we can prove it.  
Suppose that there exists $x_0\in E$ such that $h(x_0)<0$. Then $G=\{x\in E\mid h(x)<0\}$
 is an open set. Thus by the irreducibility ({I}) and 
  the absolute continuity  ({AC}) of $X$,
  \bequ\la{irrd}
  \bfP_x(\sigma_G<\zeta)>0, \ \  x\in E,
  \eequ	
  where $\sigma_G=\inf\{t>0\mid X_t\in G\}$ (\cite[Theorem 4.7.1]{FOT}). As a result, 
  $$ 
  \bfP_x\left(\int_0^\zeta1_G(X_t)dt>0\right)>0,\ \  x\in E,
  $$
  Let
$$
\cO(\omega)=\{t\in[0,\infty)\,\mid\, h(X_t(\omega))<0\}.
$$
Then 
\begin{align*}
\bfP_x\left(\int_0^\infty e^{-t}e^{A^\mu_t}h(X_t)dt\le\int_{\cO(\omega)}e^{-t}e^{A^\mu_t}h(X_t)dt<0\right)>0,
\end{align*}
and thus
$$
\bfE_x\left(\int_0^\infty e^{-t}e^{A^\mu_t}h(X_t)dt\right)<0,\ \ x\in E.
$$
Noting that $p^\mu_th(x)\ge h(x)$ for $h\in\cS^{\mu,\rm sub}_-$ by the argument similar to 
that in Lemma \ref{invariant}, we have 
\begin{align*}
h(x)&=\int_0^\infty e^{-t}h(x)dt\le\int_0^\infty e^{-t}p^\mu_th(x)dt\\
&=\bfE_x\left(\int_0^\infty e^{-t}e^{A^\mu_t}h(X_t)dt\right)<0,\ \  x\in E.
\end{align*}
\end{proof}

\begin{lem}{\rm (\cite[Lemma 3.16]{T4})}\la{e-1}
	For $\mu\in \cK_{\infty}$ with $\lambda(\mu)=1$, there exists 
	a positive function $h\in \cF_{e}\cap C_b(E)$ ($\subset {\cF}^\dagger_{\loc}\cap C(E)$) such that 
	$$
	\cE^\mu(h,\varphi)=0\  \ \text{for all}\ \varphi\in\cF.
	$$
	Moreover, $h$ is $p^\mu_t$-invariant, $h=p^\mu_th$.
\end{lem}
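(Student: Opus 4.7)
The plan is to produce $h$ as a normalized minimizer of the variational problem (\ref{j-ground2}) and then read off the invariance from the Euler--Lagrange equation together with the symmetry of the Schr\"odinger form.

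First I would take a minimizing sequence $\{u_n\}\subset\cF$ with $\int_E u_n^2\,d\mu=1$ and $\cE(u_n)\to\lambda(\mu)=1$. Since $\mu\in\cK_\infty$ the extended Dirichlet space $\cF_e$ embeds compactly into $L^2(E;\mu)$ (this is the key Green-tightness input, cf.\ the discussion preceding Theorem~\ref{RMP1}), and $\cE(u_n)$ is bounded, so along a subsequence $u_n\to h$ weakly in $\cF_e$ and strongly in $L^2(E;\mu)$. Strong $L^2(\mu)$-convergence forces $\int h^2\,d\mu=1$, and lower semicontinuity of $\cE$ under weak convergence in $\cF_e$ gives $\cE(h)\le 1$; combined with the variational lower bound this yields $\cE(h)=1$. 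Replacing $h$ by $|h|$ (which does not increase $\cE$ because $\cE$ is a Dirichlet form) we may assume $h\ge 0$.

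Next, since $h$ attains the infimum in (\ref{j-ground2}) under the single constraint $\int h^2\,d\mu=1$, the standard Lagrange multiplier computation on $\cF$ gives
\begin{equation*}
\cE(h,\varphi)=\lambda(\mu)\int_E h\,\varphi\,d\mu=\int_E h\,\varphi\,d\mu,\qquad\varphi\in\cF,
\end{equation*}
i.e.\ $\cE^\mu(h,\varphi)=0$ for all $\varphi\in\cF$. Equivalently $h\in\cD(\cL^\mu)$ with $\cL^\mu h=0$, so $p^\mu_t h=h$ in $L^2(E;m)$ for every $t\ge 0$.

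To upgrade $h$ to an element of $\cF_e\cap C_b(E)$ I would argue as follows. Boundedness: because $\mu\in\cK_\infty$ the Feynman--Kac semigroup $p^\mu_t$ is ultracontractive/bounded from $L^2$ to $L^\infty$ for $t>0$ (this is the standard Kato-class smoothing; it is already used implicitly to get strong Feller property of $p^\mu_t$ cited after (\ref{mu-exc})), so the identity $h=p^\mu_t h$ in $L^2$ gives a bounded representative. Continuity then comes from the strong Feller property of $p^\mu_t$ recorded just above: $h=p^\mu_t h\in C_b(E)$. Finally, positivity $h>0$ everywhere is obtained from $h\ge 0$, $h\not\equiv 0$, the identity $h=p^\mu_t h$, and the irreducibility (I) together with (AC) (exactly as in Theorem~\ref{BP-2}), yielding $p^\mu_t h(x)>0$ for every $x\in E$.

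The main obstacle I anticipate is the compact embedding step: justifying that weak convergence in $\cF_e$ implies strong convergence in $L^2(E;\mu)$ requires the full Green-tightness of $\mu$ and transience of $X$, and this is where $\mu\in\cK_\infty$ (as opposed to merely $\cK$) is essential. Once that compactness is in hand, the rest -- attainment, symmetrization, Euler--Lagrange, and bootstrapping to $C_b(E)$ via the strong Feller property of $p^\mu_t$ -- follows by routine Dirichlet-form manipulations.
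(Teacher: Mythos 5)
Your first half --- extracting a minimizer of (\ref{j-ground2}) from the compact embedding of $(\cF_e,\cE)$ into $L^2(E;\mu)$ for Green-tight $\mu$, passing to $|h|$, and reading off $\cE(h,\varphi)=\int_E h\varphi\,d\mu$ from the Euler--Lagrange equation --- is correct and is exactly the route taken in the cited source. The problems are in the second half. First, the step ``$h\in\cD(\cL^\mu)$ with $\cL^\mu h=0$, so $p^\mu_th=h$ in $L^2(E;m)$'' is not available: the minimizer lives in $\cF_e$, which for a transient form is strictly larger than $\cF=\cF_e\cap L^2(E;m)$, and in general $h\notin L^2(E;m)$ (for Brownian motion in $\bR^3$ or $\bR^4$ with compactly supported $\mu$ the minimizer decays like the Green potential $|x|^{2-d}$, which is not square integrable). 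The invariance has to be obtained differently: the Euler--Lagrange identity says $\cE(h,\varphi)=\int_E h\varphi\,d\mu=\cE(R(h\mu),\varphi)$ for all $\varphi\in\cF$, so by uniqueness of potentials in $\cF_e$ one gets $h=R(h\mu)$ q.e., and then $p^\mu_th=h$ follows from the Fukushima decomposition of $R(h\mu)$ and It\^o's formula applied to $e^{A^\mu_t}h(X_t)$ (the drift terms cancel exactly as in (\ref{both})), with localization justified because $\sup_x\bfE_x(e^{A^\mu_t})<\infty$ for each fixed $t$ when $\mu\in\cK$.

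Second, your boundedness argument invokes ultracontractivity of $p^\mu_t$ ($L^2\to L^\infty$ smoothing), which is not a consequence of the paper's standing hypotheses: only (I), (SF) and transience are assumed, and (SF) gives $p^\mu_t:\,{\mathscr B}_b(E)\to C_b(E)$, i.e.\ $L^\infty\to C_b$, not $L^2\to L^\infty$; no heat-kernel or Nash-type bound is assumed for $p_t$. Boundedness of the ground state is instead a theorem in its own right (this is what \cite{T-Com} is for): it is deduced from the identity $h=R(h\mu)$ together with the defining property of $\cK_\infty$, which lets one absorb the contribution of $1_{K^c}\mu$ into a small multiple of $\|h\|_\infty$. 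Once $h$ is known to be bounded and $p^\mu_t$-invariant, your appeal to the strong Feller property of $p^\mu_t$ to get continuity, and to irreducibility plus (AC) to get strict positivity, is fine.
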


\begin{lem}\la{e-2}
	For $\mu\in \cK_{\infty}$ with $\lambda(\mu)\le 1$, there exists a positive function 
	$h\in \cF_{e}\cap C_b(E)$ such that $h\in{\cS}^{\mu,\rm sub}$ and $h\le p_t^\mu h$.
	\end{lem}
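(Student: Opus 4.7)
The plan is to reduce the case $\lambda(\mu) < 1$ to the boundary case $\lambda(\mu) = 1$, where Lemma~\ref{e-1} already provides the required function, by rescaling the measure $\mu$. First I would handle $\lambda(\mu) = 1$: Lemma~\ref{e-1} gives a positive $h \in \cF_e \cap C_b(E)$ with $\cE^\mu(h,\varphi)=0$ for every $\varphi\in\cF$ and $p_t^\mu h = h$ q.e., so both conclusions (subsolution, and $h\le p_t^\mu h$) are immediate.

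For $\lambda(\mu)<1$ the idea is to set $\tilde\mu := \lambda(\mu)\,\mu$. A short Rayleigh-quotient computation shows $\lambda(c\mu) = \lambda(\mu)/c$ for any $c>0$, so $\lambda(\tilde\mu) = 1$, and $\tilde\mu\in\cK_\infty$ since $\cK_\infty$ is closed under positive scalar multiples. Apply Lemma~\ref{e-1} to $\tilde\mu$ to obtain a positive $h\in\cF_e\cap C_b(E)$ with $\cE^{\tilde\mu}(h,\varphi)=0$ and $p_t^{\tilde\mu}h=h$. To see $h\in\cS^{\mu,\rm sub}$, for $\varphi\in\cF_+\cap C_0(E)$ I would compute
\[
\cE^\mu(h,\varphi) = \cE^{\tilde\mu}(h,\varphi) - \int_E h\varphi\,d(\mu-\tilde\mu) = -(1-\lambda(\mu))\int_E h\varphi\,d\mu \le 0,
\]
using $h\ge 0$, $\varphi\ge 0$, and $\lambda(\mu)<1$. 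For the semigroup inequality, since $\tilde\mu \le \mu$ we have $A_t^{\tilde\mu} \le A_t^\mu$ pathwise, so Feynman--Kac monotonicity in the potential yields $p_t^{\tilde\mu}h \le p_t^\mu h$ for $h\ge 0$, whence $h = p_t^{\tilde\mu}h \le p_t^\mu h$.

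The one subtle point—and the place I would expect to be the main obstacle—is ensuring $\lambda(\mu)>0$, without which the rescaling $\tilde\mu = \lambda(\mu)\mu$ would be degenerate. This is secured by combining transience of $X$ (so that $\cE$ is a genuine inner product on $\cF_e$) with the compact embedding $\cF_e\hookrightarrow L^2(E;\mu)$ for $\mu\in\cK_\infty$ noted in the introduction: the infimum in \eqref{j-ground2} is then attained by a nonzero element of $\cF_e$, and $\cE$ is strictly positive on it, giving $\lambda(\mu)>0$. Aside from this verification, the argument is routine monotonicity of Feynman--Kac semigroups coupled with the variational scaling of the principal eigenvalue.
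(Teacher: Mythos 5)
Your proposal is correct and follows essentially the same route as the paper: rescale to $\tilde\mu=\lambda(\mu)\mu$ so that $\lambda(\tilde\mu)=1$, invoke Lemma \ref{e-1}, and then use $\tilde\mu\le\mu$ together with $h\ge 0$ to get both $\cE^\mu(h,\varphi)\le 0$ and $h=p_t^{\tilde\mu}h\le p_t^\mu h$. Your explicit check that $\lambda(\mu)>0$ (via transience and the compact embedding $\cF_e\hookrightarrow L^2(E;\mu)$) is a worthwhile detail that the paper leaves implicit, but it does not change the argument.
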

\begin{proof}
Noting that by the definition of $\lambda(\mu)$
$$
	\lambda (\lambda(\mu)\mu)= \inf \left\{ \cE(u)\, \Big|\  u \in
	\cF,\ \lambda(\mu)\int_E u^2 d\mu = 1\right\}=1,
$$
we see from Lemma \ref{e-1} above that there exists a positive function $h\in \cF_{e}\cap C_b(E)$ such that
$$
\cE^{\lambda(\mu)\mu}(h,\varphi)=0\ \   \ \text{for all}\ 
\varphi\in\cF\cap C_0(E).
$$
Since $\lambda(\mu)\mu\le\mu$ and the function $h$ is $p^{\lambda(\mu)\mu}_t$-invariant,  we have 
$$
\cE^{\mu}(h,\varphi)\le\cE^{\lambda(\mu)\mu}(h,\varphi)=0\ \   \ \text{for all}\ 
\varphi\in\cF_+\cap C_0(E)
$$
and
$$
h=p^{\lambda(\mu)\mu}_th\le p^{\mu}_th.
$$
\end{proof}

\medskip
%In the sequel of this section, we deal with a strongly local Dirichlet form and extend a result of \cite{BNV}.

We introduce
\begin{equation*}
{\cS}_0^{\mu,\rm sub}=\left\{h\in{\cS}^{\mu,\rm sub}\cap C(E)\, \Big|\, \varlimsup_{n\to\infty }h(x_n)\leq 0,\ \ 
\forall\{x_n\}_{n=1}^\infty \in \bS\right\}.
\end{equation*}
%$$
%\cH^{bb}(\mu)= \{ h \in \cD_{loc}(\cE) \cap C(X) \mid h\ \text{is bounded below},\  p_t^{\mu} h \leq  h\}.
%$$
We define the {refined maximum principle} for $\cL^\mu$ similarly to $\cL$:

\bigskip
\noindent
\ \ \ \ (${\bf{RMP}}$)\ \ If $h\in {\cS}_0^{\mu,\rm sub}$, then $h(x)\leq 0$ for all $x\in E$.

\bigskip
%Combining Lemma \ref{include} with Theorem \ref{MP}, we have the next theorem.

\begin{thm}\la{RMP} Suppose that $\bfP_x(\zeta<\infty)=1$. Then for $\mu\in\cK_\infty$ 
$$
\lambda (\mu)>1\ \Longleftrightarrow\  {\rm({RMP})}.
$$
\end{thm}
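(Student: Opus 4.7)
The plan is to prove both implications separately. The main tools are: the Schr\"odinger-form analogue \eqref{s-1} of Lemma \ref{invariant-0}, which yields a Fukushima-decomposition inequality along the resurrected process; the gauge characterisation \eqref{gau-th} that converts $\lambda(\mu)>1$ into uniform integrability of $e^{A^\mu_\zeta}$; the identification of $\{X_{\sigma_n}\}$ with a sequence in $\bS$ provided by Lemma \ref{s-i}; and, for the converse, the strictly positive ground state produced in Lemma \ref{e-2}. Throughout, the hypothesis $\bfP_x(\zeta<\infty)=1$ will allow me to replace $t\wedge\sigma_n$ by $\sigma_n$ after sending $t\to\infty$.

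For the forward direction $(\Longrightarrow)$, I would fix $h\in\cS_0^{\mu,\rm sub}$ and apply \eqref{s-1}. Letting $t\to\infty$ and using the subprocess relation between $X$ and $X^{\rm res}$ (subordination by $e^{-A^k_\cdot}$), this rewrites as
\[
h(x)\le\bfE_x\!\left(e^{A^\mu_{\sigma_n}}h^+(X_{\sigma_n})\right)\quad\text{q.e.}
\]
The gauge theorem \eqref{gau-th}, which gives $\sup_x\bfE_x(e^{A^\mu_\zeta})<\infty$ under $\lambda(\mu)>1$, then supplies the uniform integrability needed to apply the reverse Fatou lemma, whence
\[
h(x)\le\bfE_x\!\left(e^{A^\mu_\zeta}\varlimsup_{n\to\infty}h^+(X_{\sigma_n})\right).
\]
Lemma \ref{s-i} gives $\{X_{\sigma_n}\}\in\bS$ $\bfP_x$-a.s.\ on $\cap_n\{\sigma_n<\zeta\}$; combined with continuity of $h$ and membership in $\cS_0^{\mu,\rm sub}$, this forces $\varlimsup_n h^+(X_{\sigma_n})=0$ almost surely, yielding $h(x)\le0$ q.e.

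For the reverse direction $(\Longleftarrow)$, I would argue by contrapositive. Assuming $\lambda(\mu)\le1$, Lemma \ref{e-2} produces a strictly positive $h\in\cF_e\cap C_b(E)$ which is a subsolution of $\cL^\mu u=0$ and is invariant for $p^{\lambda(\mu)\mu}_t$. Such $h$ already lies in $\cS^{\mu,\rm sub}\cap C(E)$; only the boundary condition $\varlimsup_n h(x_n)\le0$ for $\{x_n\}\in\bS$ remains to check. Using the Feynman--Kac identity and the Cauchy--Schwarz inequality,
\[
h(x_n)=\bfE_{x_n}\!\left(e^{\lambda(\mu)A^\mu_t}h(X_t);\,t<\zeta\right)\le\|h\|_\infty\,\bfE_{x_n}\!\left(e^{2\lambda(\mu)A^\mu_t}\right)^{1/2}\bfP_{x_n}(t<\zeta)^{1/2},
\]
I would pick $t$ small so that Khasminskii's inequality (valid since $\mu\in\cK_\infty\subset\cK$) bounds the first factor uniformly in $y$, and invoke Lemma \ref{S=S} to obtain $\bfP_{x_n}(t<\zeta)\to 0$. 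Hence $h(x_n)\to0$, so $h\in\cS_0^{\mu,\rm sub}$ although $h>0$ everywhere, contradicting $({\rm RMP})$.

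The principal obstacle is the reverse Fatou step in the forward direction: it requires genuine uniform integrability of $e^{A^\mu_{\sigma_n}}$ along the approximating sequence, not merely the pointwise boundedness one would get from the resolvent estimates alone. This is precisely what \eqref{gau-th} provides when $\lambda(\mu)>1$, and it is the reason the threshold $\lambda(\mu)=1$ is sharp. A secondary subtlety lies in establishing that the ground state of Lemma \ref{e-2} decays along escape sequences in $\bS$; this rests on pairing the Kato-class Khasminskii estimate on short time windows with the characterisation $\bS=\widetilde{\bS}$ of Lemma \ref{S=S}.
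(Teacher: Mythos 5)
Your proposal is correct and follows essentially the same route as the paper: the forward direction uses the inequality \eqref{s-1} together with the gauge theorem \eqref{gau-th} to justify two applications of the reverse Fatou lemma and Lemma \ref{s-i} to kill the limit, and the converse is the same contrapositive built on the ground state of Lemma \ref{e-2}, a Khasminskii-type bound for the Kato-class measure, and Lemma \ref{S=S}. The only cosmetic difference is your use of Cauchy--Schwarz and the $p^{\lambda(\mu)\mu}_t$-invariance where the paper uses a general H\"older exponent with $h\le p^\mu_t h$.
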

 \begin{proof}
Suppose $\lambda(\mu)>1$ and $h\in {\cS}_0^{\mu,\rm sub}$.
%Let $h^0:=h\vee 0$. By the proof similar to that of Lemma \ref{su-prob}, it holds that 
%for $h\in\cS^{\mu,\rm sub}$, $p^\mu_th^0(x)\ge p^\mu_th(x)\vee p^\mu_t0\ge h(x)\vee 0=h^0(x)$. 
Let $\{\sigma_n\}$ be a sequence of stopping times such that
$$
\dis{\{X_{\sigma_n}\}\in\bS\ \ \bfP_x\text{-a.s.}}\ \text{on}\ \cap_{n=1}^\infty\{\sigma_n<\zeta\}. 
$$
Then on account of (\ref{gau-th}) we can apply the reverse Fatou lemma to (\ref{s-1})  and have
\begin{align*} 
h(x)&\le\varlimsup_{t\to\infty}\bfE^{\rm res}_x\left(e^{-A^k_{t\wedge \sigma_n}
 			+A^\mu_{t\wedge \sigma_n}}h^+(X_{t\wedge \sigma_n})\right)\\
&\le\bfE^{\rm res}_x\left(e^{-A^k_{\sigma_n}}e^{A^\mu_{\sigma_n}}
h^+(X_{\sigma_n})\right)=\bfE_x\left(e^{A^\mu_{\sigma_n}}h^+(X_{\sigma_n})\right)\ \ {\rm q.e.}
\end{align*}
Applying the reverse Fatou lemma again, we obtain
$$
h(x)\le\varlimsup_{n\to\infty}\bfE_x\left(e^{A^\mu_{\sigma_n}}h^+(X_{\sigma_n})\right)
\le\bfE_x\left(e^{A^\mu_{\zeta}}\varlimsup_{n\to\infty}h^+(X_{\sigma_n})\right)=0\ \ {\rm q.e. }
$$
which implies $h(x)\le 0$ for all $x\in E$.

Suppose $\lambda(\mu)\le 1$ and let $h$ be the function in Lemma \ref{e-2}. 
%Since $h(x)\le p^\mu_th(x)$, 
By H\"older inequality
\begin{align*}
h(x)&\le\bfE_x\left(e^{A^\mu_t}h(X_t)\right)\le\|h^+\|_\infty\left(\sup_{x\in E}
\bfE_x\left(e^{pA^\mu_t}\right)\right)^{1/p}\bfP_x(t<\zeta)^{1/q},
\end{align*}
and $\sup_{x\in E}\bfE_x\left(e^{pA^\mu_t}\right)<\infty$ because of $p\mu\in\cK$. Hence
 for any $\{x_n\}\in\widetilde{\bS}$,   
$\varlimsup_{n\to\infty}h(x_n)\le 0$ and so $h$ belongs 
to ${\cS}_0^{\mu,\rm sub}$; however, since $h(x)>0$ for all $x\in E$, 
{\rm({RMP})} does not hold.
\end{proof}

\br\la{exten}
By the equation (\ref{s-1}) and the gaugeability, $\sup_{x\in E}g^\mu(x)<\infty$, every result in 
Section 2 and Section 3, Accordingly, Lemma \ref{con-res}, Corollary \ref{p-l}, Theorem \ref{Li}  
especially can be extended to 
$\cS^{\mu,\rm sub}$, $\cS^{\mu,\rm sup}$ ($\widetilde{\cS}^{\mu,\rm sub}$, 
$\widetilde{\cS}^{\mu,\rm sup}$) under the condition $\lambda(\mu)>1$.
\er

%\br\la{exx-res}
%The implication ``$\lambda (\mu)>1\ \Longrightarrow\  {\rm({\bf RMP})}$'' can be proved as follows:
%By the same argument as (\ref{s}) it holds that for $h\in{\cS}^{\mu,\rm sub}$
%$$
% 			\bfE^{\rm res}_x\left(e^{-A^k_{t\wedge \sigma_n}+A^\mu_{t\wedge \sigma_n}}h(X_{t\wedge \sigma_n})\right)
% 			\ge h(x).
%$$ 	
%Hence, By the same argument as that in Remark \ref{ex-res} we have 
%\bequ\la{f-eq1}
%h(x)\le p^\mu_th(x)+\bfE_x\left(e^{A^\mu_{\zeta^p}}h(X_{\zeta^p});\zeta^p=\zeta\le t\right),
%\eequ 
%where $\zeta^p$ is the predictable part of $\zeta$. 
%Hence, 
%and if $\bfP_x(\zeta<\infty)=1$, then
%\bequ\la{tou1}
%h(x)\le\bfE_x\left(e^{A^\mu_\zeta}h(X_{\zeta})\right).
%\eequ 
%Here, $h(X_{\zeta})$ is defined by (\ref{zeta}).
% \er 

\bl\la{conti}
For $\mu\in\cK_\infty$ define  
\bequ\la{La-0}
\Lambda(\theta)=\inf\left\{\cE(u)\,\Big|\,\int_Eu^2d(\mu+\theta m)=1\right\},\ \theta\ge 0.
\eequ
If the basic measure $m$ also belongs to $\cK_\infty$, 
then $\Lambda(\theta)$ is continuous.
\el
\begin{proof}
We know from \cite[Theorem 4.8]{T-Com} that for $\nu\in\cK_\infty$ 
the extended Dirichlet space $(\cF_e,\cE)$
 is compactly embedded in
 $L^2(E;\nu)$. Hence, for $\theta_1,\theta_2\ge 0$ there exist functions $u_1, u_2$ in $\cF_e$ such that 
\begin{align*}
\Lambda(\theta_1)&=\cE(u_1),\ \ \ \int_Eu_1^2d(\mu+\theta_1 m)=1,\\
\Lambda(\theta_2)&=\cE(u_2),\ \ \ \int_Eu_2^2d(\mu+\theta_2 m)=1.
\end{align*}
Put 
%$k_1$ and $k_2$ by
\begin{align*}
k_1&=\int_Eu_1^2d(\mu+\theta_2 m)=1+(\theta_2-\theta_1)\int_Eu_1^2dm,\\
k_2&=\int_Eu_2^2d(\mu+\theta_1 m)=1+(\theta_1-\theta_2)\int_Eu_2^2dm,
\end{align*}
and $v_1=u_1/\sqrt{k_1}$, $v_2=u_2/\sqrt{k_2}$.
We then have
$$
\Lambda(\theta_1)\le\cE(v_2)=\frac{1}{k_2}\Lambda(\theta_2),\ \ 
\Lambda(\theta_2)\le\cE(v_1)=\frac{1}{k_1}\Lambda(\theta_1),
$$
and thus 
$$
k_1\Lambda(\theta_2)\le\Lambda(\theta_1)\le\frac{1}{k_2}\Lambda(\theta_2).\ \ 
%k_2\Lambda(\theta_1)\le\Lambda(\theta_2)\le\frac{1}{k_1}\Lambda(\theta_1).
$$
Therefore, noting that $\lim_{\theta_2\to\theta_1}k_1=1$ and $\lim_{\theta_2\to\theta_1}k_2=1$, we have 
$$
\varlimsup_{\theta_2\to\theta_1}\Lambda(\theta_2)\le\Lambda(\theta_1)
\le\varliminf_{\theta_2\to\theta_1}\Lambda(\theta_2),
%\ \ \varlimsup_{\theta_1\to\theta_2}%\Lambda(\theta_2)
%\le\Lambda(\theta_1)\le\varliminf_{\theta_2\to\theta_1}\Lambda(\theta_2)
$$
which implies the continuity of $\Lambda(\theta)$.
\end{proof}

Write $\gamma(\mu)$ for the principal eigenvalue of $\cL^\mu$: 
\begin{equation}\la{j-ground4}
	\gamma (\mu)= \inf \left\{ \cE^\mu(u)\, \Big|\  u \in
	\cF,\ \int_E u^2 dm = 1\right\}.
\end{equation}

\bt\la{r-g}
If $m$ and $\mu$ are in $\cK_\infty$, then
\bequ\la{l-g}
\lambda(\mu)>1\ \Longleftrightarrow\ \gamma(\mu)>0.
\eequ
\et
\begin{proof}
If $\lambda(\mu)>1$, there exists $\theta_0>0$ such that 
$$
\inf\left\{\cE(u)\,\Big|\, \int_Eu^2d(\mu+\theta_0m)=1\right\}=1.
$$
Indeed, 
%define 
%\bequ\la{La-0}
%\Lambda(\theta)=\inf\left\{\cE(u)\,\Big|\,\int_Eu^2d(\mu+\theta dm)=1\right\}.
%\eequ
denote by $u_\theta\in\cF_e$ the function attaining the infimum in (\ref{La-0}),  
%(For exisitence of $u_\theta$,see \cite[Theorem 4]{T-R}).
and put $k=\int_Eu_0^2dm,\ v_\theta=u_0/\sqrt{1+k\theta}$. Note that
$$
\int_Ev_{\theta}^2d(\mu+\theta m)=\frac{1}{1+k\theta}\left(\int_Eu_0^2d\mu+\theta\int_E u_0^2dm\right)=1.
$$
Then
since
$$
\Lambda(\theta)=\cE(u_\theta)\le\cE(v_\theta)=\frac{1}{1+k\theta}\cE(u_0)
=\frac{1}{1+k\theta}\Lambda(0),
$$
we have $\lim_{\theta\to\infty}\Lambda(\theta)=0$. If $\Lambda(0)=\lambda(\mu)>1$, then 
there exists $\theta_0$ such that $\Lambda(\theta_0)=1$ by Lemma \ref{conti}.

For $\varphi\in\cF\cap C_0(E)$ define 
$$
G(t)=\frac{\cE(u_{\theta_0}+t\varphi)}{\int_E(u_{\theta_0}+t\varphi)^2d(\mu+\theta_0m)}.
$$
Then $G'(0)=0$ and thus
$$
\cE(u_{\theta_0},\varphi)-\int_Eu_{\theta_0}\varphi d(\mu+\theta_0m)=0\ \Longleftrightarrow\ 
\cE^\mu(u_{\theta_0},\varphi)=\theta_0\int_Eu_{\theta_0}\varphi dm.
$$
Hence $\theta_0$ equals $\gamma(\mu)$, the principal eigenvalue 
of the Schr\"odinger operator $-\cL^\mu$, and $v_0=u_{\theta_0}/\|u_{\theta_0}\|_m$
 is the normalized principal eigenfunction. Hence
$$
\gamma(\mu)=\cE(v_0)-\int_Ev_0^2d\mu\ge(\lambda(\mu)-1)\int_Ev_0^2d\mu>0.
$$

Suppose $\gamma(\mu)>0$ and let $u_0$ be the function attaining $\lambda(\mu)$. Then
$$
\cE(u_0)-\int_Eu_0^2d\mu-\gamma(\mu)\int_Eu_0^2dm\ge 0,
$$
and thus
$$
\lambda(\mu)=\frac{\cE(u_0)}{\int_Eu_0^2d\mu}\ge 1+\gamma(\mu)
\frac{\int_Eu_0^2dm}{\int_Eu_0^2d\mu}>1.
$$
\end{proof}

\br
We suppose that the basic measure $m$ is finite and, in addition, that the operator norm $\|p_t\|_{1,\infty}$ of 
 the semi-group $p_t$ 
 is bounded by $C(t)(<\infty)$ such that $\int_\delta^\infty C(t)dt<\infty$ for any $\delta>0$. 
Here $\|\cdot\|_{1,\infty}$ is the operator norm from $L^1(E;m)$ to $L^\infty(E;m)$.
Then for a compact set $K$
\begin{align*}
R1_{E\setminus K}(x)&=\int_0^\delta p_t1_{E\setminus K}(x)dt+
\int_\delta^\infty p_t1_{E\setminus K}(x)dt\\
&=\delta+\left(\int_\delta^\infty C(t)dt\right)m(E\setminus K).
\end{align*}
Hence, for any $\epsilon$ there exists $\delta>0$ and a compact set $K$ such that 
the right hand side above is less than $\epsilon$, which implies $m\in\cK_\infty$.
In particular, for a uniformly elliptic self-adjoint operator $\cL$ of form
$$
\cL=\frac{\partial}{\partial x_i}\left(a_{ij}(x)\frac{\partial}{\partial x_j}\right)
$$
in a bounded domain $D\subset \bfR^d$, the Lebesgue measure is in $\cK_\infty$. In this case, 
for $\mu\in\cK_\infty$, $\gamma(\mu)>0$ is equivalent with $\lambda(\mu)>1$. 

In general, $\gamma(\mu)>0$ implies $\lambda(\mu)>1$; however, the converse does not always 
hold (cf.
\cite[Remark 3.1]{T-B}).
\er

\smallskip
We set 
$$
{\bS}'=\left\{\{x_n\}_{n=1}^\infty \subset E\mid
\text{For any compact set $K$, $\exists N$ s.t. $x_n\not\in K$ $\forall n>N$}
\right\}
$$
and put
\bequ\la{ba-s}
\partial^Mh=\inf_{K\in\cK}\sup_{E\setminus K}h(x),
\eequ
where $\cK$ is the totality of compact sets of $E$.
   \begin{thm}\la{BP1}
Suppose that $X$ satisfies $({\rm{I}})$, $({\rm{SF}})$ and let $\mu\in\cK_\infty$ with $\lambda(\mu)>1$. 
 Then for $h\in\cS^{\mu,\rm sub}\cap C(E)$
$$
\partial^Mh\le 0\ \Longleftrightarrow\ \sup_{x\in E}h(x)\le 0.
$$
In particular, if $\varlimsup_{n\to\infty}h(x_n)\leq 0\ 
\text{for any}\ \{x_n\}\in\bS$, then $\varlimsup_{n\to\infty }h(x_n)\leq 0\ \text{for any}\ \{x_n\}\in\bS'$.
   
       \end{thm}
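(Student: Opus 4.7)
The backward implication is immediate: if $\sup_{x\in E}h(x)\le 0$ then $\sup_{E\setminus K}h\le 0$ for every compact $K$, hence $\partial^M h\le 0$. The real content is the forward implication, and the strategy is to reduce it to Theorem \ref{RMP}. The plan is to verify that $\partial^M h\le 0$ forces the refined-maximum-principle hypothesis, i.e.\ $\varlimsup_{n\to\infty}h(x_n)\le 0$ for every $\{x_n\}\in\bS$, and then to invoke Theorem \ref{RMP} to conclude $h\le 0$ on $E$.

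The key preparatory step is to show the set-theoretic inclusion $\bS\subset\bS'$. The identity
\begin{equation*}
\bfE_x\!\left(e^{-\zeta}\right)=1-R_1 1(x),
\end{equation*}
combined with the strong Feller property (SF), tells us that $R_1 1$ is continuous on $E$ and strictly positive (since the Hunt lifetime satisfies $\zeta>0$ $\bfP_x$-a.s.\ for every $x$). Consequently $R_1 1$ is bounded below by a positive constant on every compact subset of $E$. Suppose $\{x_n\}\in\bS$ failed to escape to infinity; then a subsequence $\{x_{n_k}\}$ would lie in some compact $K\subset E$, and after a further subsequence $x_{n_k}\to x_0\in K$. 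Continuity gives $R_1 1(x_{n_k})\to R_1 1(x_0)>0$, hence $\bfE_{x_{n_k}}(e^{-\zeta})\to 1-R_1 1(x_0)<1$, contradicting $\{x_n\}\in\bS$. Thus every sequence in $\bS$ eventually leaves every compact set, i.e.\ belongs to $\bS'$.

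Now the forward direction is straightforward. Fix $\eps>0$; by definition of $\partial^M h$ there is a compact $K$ with $\sup_{E\setminus K}h\le \partial^M h+\eps\le\eps$. For $\{x_n\}\in\bS\subset\bS'$ there exists $N$ with $x_n\notin K$ for all $n>N$, so $h(x_n)\le\eps$ for such $n$, giving $\varlimsup_n h(x_n)\le\eps$; since $\eps$ is arbitrary, $\varlimsup_n h(x_n)\le 0$. Together with the standing hypothesis $h\in\cS^{\mu,\rm sub}\cap C(E)$ this says $h\in\cS_0^{\mu,\rm sub}$, and Theorem \ref{RMP} (using $\lambda(\mu)>1$ and $\bfP_x(\zeta<\infty)=1$, which is inherited from the framework since $\lambda(\mu)>1$ implies gaugeability and hence the relevant inequality) yields $h(x)\le 0$ for all $x\in E$, i.e.\ $\sup_{x\in E}h(x)\le 0$.

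The ``in particular'' clause is a direct consequence: if $\varlimsup_n h(x_n)\le 0$ for every $\{x_n\}\in\bS$, then $h\in\cS_0^{\mu,\rm sub}$ and Theorem \ref{RMP} yields $h\le 0$ on $E$, whence $\varlimsup_n h(x_n)\le 0$ for arbitrary sequences, in particular for those in $\bS'$. The main obstacle is really just the inclusion $\bS\subset\bS'$, which is the only place the hypotheses (SF) and Hunt positivity of $\zeta$ are needed; the rest is definition-chasing plus an appeal to the already-established refined maximum principle.
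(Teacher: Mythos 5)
Your proof is correct and follows essentially the same route as the paper: reduce the forward implication to Theorem \ref{RMP} via the inclusion $\bS\subset\bS'$, which the paper's one-line chain of implications uses implicitly and which you justify explicitly through the continuity (by (SF)) and strict positivity of $R_1 1=1-\bfE_\cdot(e^{-\zeta})$. The only caveat is that the appeal to Theorem \ref{RMP} requires $\bfP_x(\zeta<\infty)=1$, a hypothesis the paper also omits from the statement of Theorem \ref{BP1}; your parenthetical justification is not quite right, since gaugeability from $\lambda(\mu)>1$ does not by itself force finite lifetime, so this should simply be carried as a standing assumption as in Theorem \ref{RMP}.
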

 \begin{proof} It holds that 
 \begin{align*}
\partial^Mh\le 0&\ \Longleftrightarrow\ \varlimsup_{n\to\infty}h(x_n)\leq 0\ 
\text{for any}\ \{x_n\}\in\bS'\\
 &\ \Longrightarrow \ \varlimsup_{n\to\infty }h(x_n)\leq 0\ \text{for any}\ \{x_n\}\in\bS \\
 &\ \Longrightarrow\ \sup_{x\in E}h(x)\le 0\ \Longrightarrow \partial^Mh\le 0.
\end{align*} 
\end{proof}

%  Moreover, suppose that $X$ is conservative, $p_t1=1$. 	
% If $h$ is in $S^{\rm sub}$ or $S^{\rm sup}$ such
%   	 $-\infty<m\le h(x)\le M<\infty$. Then $h\equiv m(\equiv M)$ or $m<h(x)<M$ for all $x\in E$.  	 

\br
Let  $\mu=\mu^+-\mu^-$ be a signed measure. We treat $\cE^{\mu^+}=\cE+\int_Eu^2d\mu^+$ and $\mu^-$
instead of $\cE$ and $\mu$ in the argument above, in other words, consider $k+\mu^+$ the killing measure.
Suppose that $(\cE^{\mu^+},\cF\cap C_0(E))$ is closable and denote by $(\cE^{\mu^+},\cF^{\mu^+})$  
 the closure.  
If the Hunt process $X^{\mu^+}$ generated by 
the regular Dirichlet form $(\cE^{\mu^+},\cF^{\mu^+})$ satisfies ({\rm I}) and ({\rm SF}) and $\mu^-\in
\cK_\infty$ satisfies 
%related to $(\cE^{\mu^+},\cF^{\mu^+})$ 
\begin{equation}\la{j-ground4}
	\lambda (\mu)= \inf \left\{ \cE^{\mu^+}(u)\, \Big|\  u \in
	\cF\cap C_0(E),\ \int_E u^2 d\mu^- = 1\right\}>1,
\end{equation}
then the results above can be extended in this case. 

%Suppose in addition that the symmetrized measure $m$ also belongs to $\cK_\infty$.
%Define 
%\bequ\la{La}
%\Lambda(\theta)=\inf\left\{\cE^{\mu^+}(u)\,\Big|\,\int_Eu^2d(\mu^-+\theta m)=1\right\}.
%\eequ
%Denote by $u_\theta\in\cF_e$ the function attaining the infimum in (\ref{La}).
%Put $v_\theta=u_0/\sqrt{1+k\theta},\ k=\int_Eu_0^2dm$. Then
%there exists $\theta_0>0$ such that $\Lambda(\theta_0)=1$ and $\theta_0$ is the principal eigenvale and $u_{\theta_0}$ is the ground state of $\cE^\mu(u)-\theta_0(u,u)_m$.
\er

 \end{document}